\DeclareRobustCommand{\qed}{%
  \ifmmode % if math mode, assume display: omit penalty etc.
  \else \leavevmode\unskip\penalty9999 \hbox{}\nobreak\hfill
  \fi
  \quad\hbox{\qedsymbol}}
\newcommand{\openbox}{\leavevmode
  \hbox to.77778em{%
  \hfil\vrule
  \vbox to.675em{\hrule width.6em\vfil\hrule}%
  \vrule\hfil}}
\newcommand{\qedsymbol}{\openbox}
\newenvironment{proof}[1][\proofname]{\par
  \normalfont
  \topsep6\p@\@plus6\p@ \trivlist
  \item[\hskip\labelsep\itshape
    #1.]\ignorespaces
}{%
  \qed\endtrivlist
}
\newcommand{\proofname}{Proof}
\newcommand\independent{\protect\mathpalette{\protect\independenT}{\perp}}
\def\independenT#1#2{\mathrel{\rlap{$#1#2$}\mkern2mu{#1#2}}} 
\newcommand{\ipm}{\text{IPM}}
\newcommand{\mmd}{\text{MMD}}
\newcommand{\disc}{\text{disc}}
\newcommand{\ipmFctClass}{\mathcal{F}}
\newcommand{\responseFct}{\tau}
\newcommand{\fct}{f}
\newcommand{\wass}{\mathcal{W}_1}
\newcommand{\rkhs}{\mathcal{H}}
\newcommand{\kernel}{\mathcal{K}}
\newcommand{\inner}[2]{\langle#1,#2\rangle_{\rkhs}}
\newcommand{\regClass}{\mathcal{G}}
\newcommand{\regFct}{g}
\newcommand{\regFctMin}{\hat{\regFct}_{\wVect}^{\nObs}}
\newcommand{\convRegFct}{\rho}
\newcommand{\ipmCst}{\gamma}
\newcommand{\boundedGees}{B}
\newcommand{\w}{w}
\newcommand{\wVect}{\mathbf{w}}
\newcommand{\reg}[1]{R_{#1}}
\newcommand{\regHyp}{\lambda}
\newcommand{\simplex}{\Lambda_{\nObs}}
\newcommand{\clip}{W}
\newcommand{\nObs}{n}
\newcommand{\treat}{T}
\newcommand{\tr}{t}
\newcommand{\cov}{X}
\newcommand{\x}{x}
\newcommand{\out}{Y}
\newcommand{\y}{y}
\newcommand{\tdim}{d_{\treat}}
\newcommand{\xdim}{d_{\cov}}
\newcommand{\joinVar}{Z}
\newcommand{\z}{z}
\newcommand{\joindim}{d}
\newcommand{\treatSample}{\mathbf{T}}
\newcommand{\tSupp}{\mathcal{T}}
\newcommand{\txSupp}{\mathcal{Z}}
\newcommand{\ySupp}{\mathcal{Y}}
\newcommand{\tDistr}{P_{\treat}}
\newcommand{\xDistr}{P_{\cov}}
\newcommand{\joinDistr}{P_{\treat,\cov}}
\newcommand{\yCondDistr}{P_{\out|\treat,\cov}}
\newcommand{\yDistr}{P_{\out}}
\newcommand{\source}{P}
\newcommand{\target}{Q}
\newcommand{\txTarget}{\target_{\treat, \cov}}
\newcommand{\yConstRange}{M}
\newcommand{\yVar}{\sigma^2}
\newcommand{\anyDensity}[1]{f_{#1}}
\newcommand{\dirac}[1]{\delta_{#1}}
\newcommand{\wDistr}{\hat{\source}^{\wVect}_{\treat,\cov}}
\newcommand{\empTar}{\hat{\target}_{\treat, \cov}}
\newcommand{\souTarDensity}{\frac{\partial \txTarget}{\partial \joinDistr}}
\newcommand{\souTarDenBound}{\clip_0}
\newcommand{\epo}{\mu}
\newcommand{\cbdm}{\text{CBDM}}
\newcommand{\causalRisk}{\mathcal{R}^{\epo}_{\treat}}
\newcommand{\condExp}{\responseFct}
\newtheorem{assumption}{Assumption}
\newtheorem{definition}{Definition}
\newtheorem{theorem}{Theorem}
\newtheorem{lemma}{Lemma}
\newtheorem{remark}{Remark}
\newtheorem{proposition}{Proposition}
\DeclareMathOperator*{\argmin}{arg\,min}
\newcommand{\radVar}{\xi}
\newcommand{\radVarVect}{\boldsymbol{\xi}}
\newcommand{\radComp}{R_{\nObs}(\regClass \circ \treatSample)}
\newcommand{\approxDelta}{\Delta}
\title{A Balancing Weight Framework for Estimating \\the Causal Effect of General Treatments}
\author{} % LEAVE BLANK FOR ORIGINAL SUBMISSION.
\author{ {\bf Guillaume Martinet} \\
ORFE, Princeton University \\
}
\begin{document}

\maketitle

\begin{abstract}

In observational studies, weighting methods that directly optimize the balance between treatment and covariates have received much attention lately; however these have mainly focused on binary treatments. Inspired by domain adaptation, we show that such methods can be actually reformulated as specific implementations of a discrepancy minimization problem aimed at tackling a shift of distribution from observational to interventional data. More precisely, we introduce a new framework, \emph{Covariate Balance via Discrepancy Minimization} (\cbdm), that provably encompasses most of the existing balancing weight methods and formally extends them to treatments of arbitrary types (e.g., continuous or multivariate). We establish theoretical guarantees for our framework that both offer generalizations of properties known when the treatment is binary, and give a better grasp on what hyperparameters to choose in non-binary settings. Based on such insights, we propose a particular implementation of \cbdm~for estimating dose-response curves and demonstrate through experiments its competitive performance relative to other existing approaches for continuous treatments.
\end{abstract}

\section{PRELIMINARIES}
% (1)"From epidemiology, healthcare or genomics to social sciences" -> "Examples of such fields include: epidemiology, genomics, and political science, where the problem of causal effect "
% (2)"From epidemiology, healthcare or genomics to social sciences" -> "From epidemiology to political science to economics,"

Estimating the causal effect of a treatment or, more broadly speaking, a cause on an outcome is a common problem in many fields of practical importance. From epidemiology to political science to economics, the problem of causal effect estimation arises when we are interested in quantifying the effect of some potential action or treatment administration. In many situations though, the use of controlled experiments is hampered by either technical or ethical considerations, and we have to resort to \emph{observational} data where the treatment assignment is no longer under our control and can be confounded. Causal inference is then done by leveraging additional information such as pre-treatment covariates that we believe encompass most of the confounding effects present in the data. Over the years different ways of adjusting for these covariates have been introduced in the literature \cite{imbens2004nonparametric}, including: matching \cite{rosenbaum1989optimal, stuart2010matching, wu2018matching}, subclassification \cite{hirano2004propensity, imai2004causal,imbens2000role,rosenbaum1983central}, regression \cite{hill2011bayesian,rubin1977assignment}, weighting \cite{hirano2003efficient, robins2000marginal, rosenbaum1987model} or some doubly robust combinations of these \cite{kennedy2017non,robins1994estimation}.

As a preprocessing step before further analysis, weighting has the advantage to allow the practitioner to focus solely on modeling the dose response curve, and potentially use the same weights for different outcomes. Usually, weights are computed by estimating and inverting the \emph{propensity score}, that is the distribution of treatment given the covariates. However, low overlap or even mild misspecification of the propensity score model can lead to highly variable weights and unstable treatment effect estimation \cite{kang2007demystifying}. To tackle such issues, weighting methods that optimize the balance between treatment and covariates either directly \cite{chan2016globally,hainmueller2012entropy, zubizarreta2015stable} or in parallel to the estimation of a propensity score \cite{imai2014covariate, mccaffrey2004propensity} have been introduced. A simple example of this approach for a binary treatment would be to find weights that equate some moments of the covariates within each treatment group to their population-wise values. The idea is that if the response functions, i.e.~the expected outcome given the covariates for different treatment values, can be linearly approximated by the corresponding monomials, such weights are \emph{enough} to correct for confounding effects. In comparison, propensity score based weighting is unnecessarily harder and thus more prone to errors. Extensions to handle richer classes of functions like RKHS have been made \cite{hazlett2018kernel,kallus2016generalized, kallus2017framework, wong2017kernel}, and overall the balancing weight approach has been well-studied theoretically in binary settings \cite{fan2016improving, hirshberg2019minimax, wang2017minimal,zhao2019covariate,zhao2017entropy}. Indeed, the balancing weight approach has mostly been restricted to binary treatments and only recently some attempts have been made to extend the idea to non-binary settings \cite{ fong2018covariate,kallus2019kernel, yiu2018covariate, zhu2015boosting}. However, these extensions seem to focus only on minimizing the covariance or association between treatment and covariates, and often lack proper theoretical justification.

% (1) "The main reason of this relationship is that..." -> "This relationship exists because both the..."
% (2) "The main reason of this relationship is that..." -> "This relaitionship comes(draws) from the fact that..."
\paragraph{Contributions.} We argue in the present paper that the balancing weight approach should be interpreted as a \emph{discrepancy minimization} problem from domain adaptation \cite{cortes2011domain, cortes2014domain, cortes2019adaptation, mansour2009domain} that aims at re-weighting the data so that it mimics an experimentation where each individual is given a random treatment value. This relationship draws from the fact that both the balancing weight approach and discrepancy minimization incorporate knowledge or some assumption about the response function into the derivation of the weights, contrary to inverse propensity score weighting which is essentially a density-ratio estimation \cite{arbour2019permutation}. We derive from this new interpretation a framework that, as we show, not only encompasses existing balancing weight methods but also lets us naturally and formally extend them to treatments of arbitrary types, and which benefits from similar theoretical guarantees to the binary treatment setting.  
%and this with similar theoretical guarantees than in settings with binary treatments.

Based on insights offered by our analysis, we propose a particular implementation of our framework for estimating dose-response curves that we test on experiments. This implementation is similar to the algorithm proposed in the recent paper \cite{kallus2019kernel}. Compared to this work though, they don't provide formal or theoretical justifications for their algorithm, and seem to focus only on minimizing a functional covariance based on polynomials between treatment and covariates, which as we shall discuss may not work in all circumstances.

% In the following paragraph introduce the support of the treatments 
\paragraph{Setting.} The observational data are composed of $\nObs$ i.i.d.~joint observations $(\treat_i, \cov_i, \out_i)_{i = 1}^{\nObs}$ of treatments $\treat$, pre-treatment covariates $\cov$ and outcomes $\out$. We work under the Neyman-Rubin potential outcomes framework \cite{neyman1923applications, rubin1974estimating, rubin2005causal}; that is, we posit the existence of potential outcomes $\out(\tr)$ that indicate what the outcome would have been if the treatment were fixed at $\tr$; the observed outcome is the potential outcome taken at the observed treatment: $\out = \out(\treat)$. Call $\tSupp$ the support of the distribution of $\treat$. The fact that the covariates capture all of the confounding effects is formalized by the ignorability assumption \cite{imai2004causal,rosenbaum1983central}:
% Ignorability assumes that blah
\begin{assumption} [Ignorability] \label{ass:ignorability}
Ignorability assumes:
\[
\out(\tr) \independent \treat | \cov, \quad \forall t \in \tSupp.
\]
\end{assumption}
The causal quantity of interest is the expected potential outcome, also called the dose-response function \cite{fong2018covariate}, which represents the outcome we would get on average if we fixed the treatment at $\tr$ for the considered population:
\begin{definition} [Dose-Response Function] \label{def:epo}
%Calling $\tSupp$ as in Assumption \ref{ass:ignorability}, the dose-response function is:
\[
\epo(\tr) = \mathbb{E}[\out(\tr)], \quad \forall \tr \in \tSupp.
\]
\end{definition}
In most cases, because of the dependence between treatment assignment $\treat$ and potential outcomes $\out(t)$, we will have $\mathbb{E}[\out | \treat = \tr] \neq \epo(\tr)$: This is the reason why causation is not the same as prediction. It turns out that $\epo(\tr)$ is \emph{also} actually a conditional expectation under a \emph{shifted} distribution. To see this, first define $P_V$ and $P_{W|V}$ as the distribution and conditional distribution respectively of random variables $V$ and $W$, and let:

\begin{itemize}
    \item $\source = \joinDistr \otimes \yCondDistr$, be the distribution from which our observational data is drawn.
    \item $\target = \tDistr \otimes \xDistr \otimes \yCondDistr$, be the distribution of our system in an experimental setting, where $\treat \independent \cov$.
\end{itemize}
If $\epo(\tr)$ is not equal to the conditional expectation of $\out$ given $\treat$ under $\source$, it is in fact equal under $\target$. Using Assumption \ref{ass:ignorability} and manipulations of conditional expectations:
\begin{equation}\label{eq:epoEquality}
    \epo(t) = \mathbb{E}_{\xDistr}[\mathbb{E}_{\yCondDistr}[\out|\tr,\cov]] = \mathbb{E}_{\target}[\out |\treat = \tr].
\end{equation}

\begin{figure}[]
\centering
\includegraphics[width=6.5cm, height = 2.35cm]{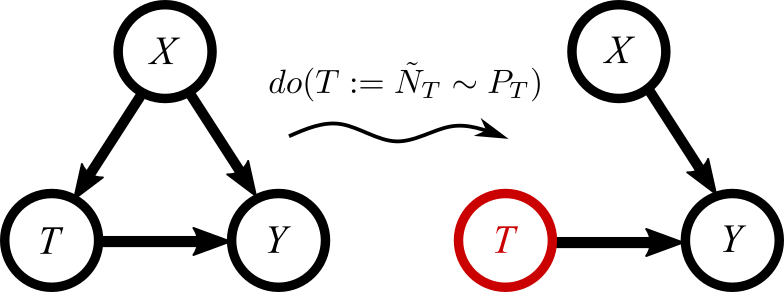}

\caption{A causal graph representation of the shift of distribution from $\source$ to $\target$.} \label{fig:causal_graph}
\end{figure}

\paragraph{Causal Graph Interpretation.} Another way of understanding equation \eqref{eq:epoEquality} is through the graphical interpretation of potential outcomes discussed in \cite[Section Section 3.6.3]{pearl2009causality} and \cite[Section 6.9.2]{peters2017elements}. Figure \ref{fig:causal_graph} (L.H.S.) displays a simple structural causal model representing our observational data distribution $\source$ (other models are also consistent with Assumption \ref{ass:ignorability}). Observe that $\epo(\tr)$ corresponds to the average outcome we obtain by setting $\treat$ to $\tr$ through a do-intervention that breaks the influence of $\cov$ on $\treat$. Equation \eqref{eq:epoEquality} means that we can estimate $\epo$ by regressing $\out$ on $\treat$ under the corresponding interventional distribution $\target$, represented in the R.H.S.~of Figure \ref{fig:causal_graph}, where each individual receives a random treatment like in an experiment. As we observe data only from $\source$ (and \emph{not} from $\target$), this is a domain adaption problem.
% (bring up Q in the last sentence, maybe some extra words to indicate the nature of it being domain adaptation). 

\paragraph{Additional Notations.} In the following, we call $\responseFct(\tr,\x) = \mathbb{E}_{\out | \treat, \cov}[\out|\tr,\x]$ the response function. We call $\joinVar = (\treat, \cov)$ and assume the supports $\txSupp \subset \mathbb{R}^{\tdim} \times \mathbb{R}^{\xdim}$ of $\joinDistr$ and $\ySupp \subset \mathbb{R}$ of $\yDistr$ to be compact. Hence there are $\yConstRange, \yVar \geq 0$ such that a.s., $Var_{\out|\treat,\cov}(\out |\tr,\x) \leq \yVar$, and $\forall \y \, \yCondDistr \text{-a.s.}$, $| y - \responseFct(\treat,\cov)| \leq \yConstRange / 2$, where 'a.s' means 'almost surely'. We also denote the simplex in $\mathbb{R}^\nObs$ by $\simplex = \{\wVect \in \mathbb{R}^{\nObs} : \wVect \geq 0, \,\, \sum_i^{\nObs} \w_{i} = 1\}$. Finally, $\|.\|_1$, $\|.\|_2$ and $\|.\|_{\infty}$ refer to the $L^1$, $L^2$ and $L^{\infty}$ norms.

\section{\cbdm~FRAMEWORK}

First, let's recall that the goal is to estimate the dose-response function $\epo$. As it is a conditional expectation under $\target$, we will measure the (relative) success of any function $\regFct \in \regClass$ in approximating $\epo$ by using the quadratic risk defined as follows:
\begin{equation} \label{eq:causalRisk}
    \forall \regFct \in \regClass, \quad \causalRisk(\regFct) = \mathbb{E}_{\target_\treat}[(\epo(\treat) - \regFct(\treat))^2].
\end{equation}

As mentioned previously we have $\mathbb{E}_{\source}[\out | \treat = \tr] \neq \epo(\tr)$, therefore we cannot estimate $\epo$ simply by returning $\regFct$ that minimizes the empirical risk $\sum_i \nObs^{-1} (\out_i - \regFct(\treat_i))^2$. Equation \eqref{eq:epoEquality} though tells us we would be able to do so by using a sample drawn from $\target$, but we have only access to observational data. We can however re-weight our data so that the resulting weighted empirical risk would be close enough to the risk under $\target$. Therefore, a weighting procedure for the estimation of the dose-response function proceeds in two steps:

- { {\bf Step 1:}}  Obtain weights $\wVect$ that handles the shift of distribution from $\joinDistr$ to $\txTarget$ (e.g.~the \cbdm~weights from Definitions \ref{def:CBDMframe} or \ref{def:cbdmIPM} afterward). 

- { {\bf Step 2:}} Return $\regFct \in \regClass$ that minimizes the weighted empirical risk $\sum_i \w_i (\out_i - \regFct(\treat_i))^2$.

Notice the weights from Step 1 need only to depend on $(\treat, \cov)$ because $\source_{\out | \treat, \cov} = \target_{\out | \treat, \cov}$.

An obvious choice of weights for Step 1 would be to take the density ratio $\nObs\w_i = \partial \txTarget / \partial \joinDistr (\treat_i, \cov_i)$ (when it exists), and several methods from the domain adaptation literature are available for estimating it \cite{bickel2007discriminative, huang2007correcting, sugiyama2008direct}. This approach explored in \cite{arbour2019permutation} is actually another way of deriving inverse propensity weights: Let $\anyDensity{}$ be any density, we have $\partial \txTarget / \partial \joinDistr = \anyDensity{\treat} \anyDensity{\cov} / \anyDensity{\treat, \cov} = \anyDensity{\treat} / \anyDensity{\treat | \cov}$ which is indeed the (stabilized) inverse propensity score \cite{robins2000marginal}. The intrinsic purpose of balancing weights, on the other hand, is to control for the empirical average of a specific class of functions. Because these weights solve a weaker problem than the inverse propensity score, when such a class is well-specified w.r.t.~the response function $\responseFct$ they tend to be more stable and efficient. It turns out that the counterpart of such an approach in the domain adaptation literature is the so-called \emph{discrepancy minimization} algorithm first introduced by \cite{mansour2009domain}. The algorithm proceeds by deriving weights that minimize a discrepancy notion between source and target data that incorporates the hypothesis class $\regClass$ and the loss chosen for the regression. Many variants of this kind of notion have been introduced in the literature, such as: the $d_{\mathcal{A}}$-distance \cite{ben2007analysis}, the discrepancy  \cite{cortes2011domain, cortes2014domain,mansour2009domain}, the $\mathcal{Y}$-discrepancy \cite{mohri2012new}, the generalized discrepancy \cite{cortes2019adaptation}, and integral probability metrics (\ipm) \cite{zhang2012generalization}. We call \emph{Covariate Balance via Discrepancy Minimization}, or \cbdm, the framework containing all weighting methods that are implementations of the discrepancy minimization algorithm applied to treatment effect estimation. Calling the weighted empirical distribution $\wDistr = \sum_i \w_i \dirac{\treat_i, \cov_i}$, where $\dirac{}$ refers to the Dirac measure, and $\empTar$ an empirical estimation of $\txTarget$, \cbdm~is defined as follows:
\begin{definition}[CBDM Framework] \label{def:CBDMframe} A method falls into the \cbdm~framework if it returns weights $\wVect$ s.t.:
 \begin{equation} \label{eq:generaCBDM}
     \wVect = \argmin_{\wVect' \in \mathbb{R}^\nObs} \quad \disc(\hat{\source}^{\wVect'}_{\treat,\cov}, \empTar) + \reg{\regHyp}(\wVect'),
 \end{equation}
where $\disc$ refers to a discrepancy notion between measures, and $\reg{\regHyp}$ is a regularizer parameterized by $\regHyp \geq 0$. 
\end{definition}
In the following we focus solely on integral probability metrics, for two reasons. First, these discrepancy notions are easily implementable and interpretable. Secondly, they have been used in binary treatment effect estimation \cite{ kallus2016generalized,kallus2017framework}, yet without any reference to the discrepancy minimization approach. We also discuss how we can create $\empTar$ from the available observational data at the end of this section.
 
\paragraph{Integral Probability Metrics.} 

\ipm s quantify how much two probability measures differ by their expectations over some class $\ipmFctClass$ of measurable functions \cite{muller1997integral}:
\begin{definition}[\ipm] \label{def:IPM}
Let $\source_0$ and $\target_0$ be two probability measures. Their \ipm~value over some class $\ipmFctClass$ is:
\[
\ipm_{\ipmFctClass}(\source_0, \target_0) = \sup_{\fct \in \ipmFctClass} \left | \mathbb{E}_{\joinVar \sim \source_0}[\fct(\joinVar)] - \mathbb{E}_{\joinVar \sim \target_0}[\fct(\joinVar)] \right |.
\]
\end{definition}

We consider two types of \ipm s that are often used in the literature for comparing two distributions: The maximum mean discrepancy (\mmd) \cite{gretton2007kernel, gretton2012kernel} and the Wasserstein-1 distance ($\wass$) \cite{ramdas2017wasserstein, villani2003topics}. First, \mmd~is $\ipm_{\ipmFctClass}$ with $\ipmFctClass$ being the unit ball of a RKHS $\rkhs$. Recall that a RKHS is a Hilbert functional space fully characterized by a kernel $\kernel(\cdot,\cdot)$ in the sense that \cite{aronszajn1950theory}: $\forall \z \in \txSupp$, $\kernel(\z,\cdot)$ is a feature embedding of $\z$ into $\rkhs$; $\forall \fct \in \rkhs, \fct(\z) = \inner{\kernel(\z,\cdot)}{\fct}$; and $\rkhs$ is the completion of $\text{span}\{ \kernel(\z,\cdot): \z \in \txSupp\}$. We consider the following well-known kernels:

- {\bf Polynomial Kernels:} $\kernel(\z,\z') = (1 + \z^T \z')^p$ for some $p$. Its RKHS spans the space of all polynomials of degree up to $p$.

- {\bf Exponential Kernel:} $\kernel(\z,\z') = \exp(\z^T \z')$.

- {\bf Gaussian Kernel:} $\kernel(\z,\z') = \exp(-\| \z - \z' \|^2/2)$.

Second, the distance $\wass$ is also $\ipm_{\ipmFctClass}$, but instead with $\ipmFctClass$ being the class of the Lipschitz functions (for the Euclidian distance here) with Lipschitz constant equal at most one. This result is also known as the Kantorovich-Rubinstein theorem \cite{villani2003topics}. Even though \ipm s seem to focus only on some restricted class of functions $\ipmFctClass$, by linearity of the expectation they actually control for more than that. We formalize this fact, which is going to be useful in our analysis, in the following definitions:
\begin{definition}[Approximable Functions] \label{def:approxFct} Let $\fct$ be a function on $\txSupp$, and $\ipmFctClass$ a class of functions, we say that:

- $\fct$ is ($\gamma$, $\epsilon$)-approximable by $\ipmFctClass$ if, for some $k$, there exist $\{ \fct_{i} \}_{i=1}^{k} \in \ipmFctClass^k$ and $\boldsymbol{\lambda} = \{ \lambda_{i} \}_{i=1}^{k} \in \mathbb{R}^k$ such that:
\[
 \| \boldsymbol{\lambda} \|_1 \leq \gamma \quad \text{and} \quad  \| \fct - {\textstyle\sum}_{i=1}^{k} \lambda_i \fct_i  \|_{\infty} \leq \epsilon. 
\]
- $\fct$ is approximable by $\ipmFctClass$ if for any $\epsilon > 0$ it is ($\gamma$, $\epsilon$)-approximable for some $\gamma \geq 0$.
\end{definition}

Recall that a continuous kernel $\kernel(\cdot,\cdot)$ is called \emph{universal} if the space of continuous functions on $\txSupp$ (compact), that is $C(\txSupp)$, is approximable by its RKHS \cite{steinwart2001influence}. For instance, the exponential and gaussian kernels are universal, but the polynomial kernel is not.

\paragraph{\ipm-Based \cbdm~Weights.} We develop \cbdm~with \ipm s and leave the study of other choices of discrepancy notions as an open question for future research. Following standard practice in causal inference, we also use the following constraints and regularizer:
\begin{definition}[\cbdm~with \ipm s] \label{def:cbdmIPM} Let $\regHyp \geq 0$ and some clipping constant $\clip \geq 0$; then define:
\[
\wVect = \argmin_{\wVect' \in \simplex; \,\, \w_i \leq \clip / \nObs, \, \forall i} \ipm^2_{\ipmFctClass}(\hat{\source}^{\wVect'}_{\treat,\cov}, \empTar) + \regHyp \reg{\nObs}(\wVect'),
\]
where $\reg{\nObs}(\wVect) = \nObs^{-2} \sum_{i=1}^{\nObs} \rho(\nObs \w_i)$, with $\rho$ strictly convex, continuous on $[0 , \clip]$ and differentiable on $(0,\clip)$. 
\end{definition}
Notice: $\reg{\nObs}(\wVect) = \| \wVect \|^2_2$ if $\rho(x) = x^2$ and $\reg{\nObs}(\wVect) =  \nObs^{-1} (\sum_{i=1}^{\nObs} \w_i \log(\w_i) + \log(\nObs))$ for $\rho(x) = x \log(x)$.

\paragraph{About $\empTar$.} Contrary to the usual setting in domain adaptation, we do not have access to samples from the target distribution $\txTarget$. Following \cite{arbour2019permutation}, it is however possible to build an empirical approximation of $\txTarget$, called $\empTar$, using only our observed data $(\treat_i, \cov_i)_{i = 1}^n$. For instance, we can just take the product of the marginal empirical distributions of $\treat$ and $\cov$: $\empTar = \hat{\source}_{\treat} \otimes \hat{\source}_{\cov}$. For continuous treatments this can be composed of $O(\nObs^2)$ Diracs, which may be computationally prohibitive for large $\nObs$. Instead, we found in practice that it is sufficient to simply shuffle the treatment values. More precisely, we create $\empTar$ by shuffling the treatments $\treat_i$ in the data while keeping the covariates $\cov_i$ fixed, we repeat the process $K$ times and merge the obtained data-sets, $\empTar$ is the resulting empirical distribution.

\section{SPECIAL CASES} \label{sec:special_cases}

In this section, to sustain the idea that the balancing weight approach should indeed be restated as a discrepancy minimization problem, we show that actually most of the weighting methods that directly optimize the balance between $\treat$ and $\cov$ are special cases of the \cbdm~framework. All proofs are in the appendix.

\subsection{Approximate and Exact Balancing} \label{sec:approxBal}
When the treatment is binary ($\treat \in \{ 0,1\}$), approximate and exact balancing methods seek to equate expectations of a finite number of functions $(f_k)_{k=1}^K$ within each treatment group to their population-wide counterparts. Formally, assume w.l.o.g.~that the $\nObs_0>0$ first observations are in the control group (where $\treat = 0$), and the last $\nObs_1 = \nObs - \nObs_0>0$ are the treated group ($T=1$). These methods commonly find $(\Tilde{\wVect}^0, \Tilde{\wVect}^1)$ such that:
\begin{equation} \label{eq:approxExactBal}
     \Tilde{\wVect}^\tr(\delta_{\tr}) = \argmin_{\substack{\nObs \Tilde{\wVect} / \nObs_\tr \in \Lambda_{\nObs_\tr}: \,\, \forall k, \\ |\nObs_\tr^{-1} \sum_{i=1}^{\nObs_t} \nObs \Tilde{\w}_i f_k(X_{i + \tr. \nObs_{0}}) - \Bar{f}_k | \leq \delta_{\tr}} } \|  \Tilde{\wVect} \|_2^2,
\end{equation}
for $\tr \in \{ 0,1\}$ and $\bar{f}_k = \nObs^{-1} \sum_{i=1}^{\nObs} f_k(\cov_i)$. Then, we estimate the dose-response function by computing the weighted average of $\out$ within each group, namely:
\[
\forall \tr \in \{0,1 \}, \quad \Tilde{\epo}(\tr) = \sum_{i = 1}^{\nObs_\tr} \Tilde{\w}^\tr_{i}(\delta_{\tr}) \out_{i + \tr . \nObs_{0}} \Big/  \sum_{i = 1}^{\nObs_\tr} \Tilde{\w}^\tr_{i}(\delta_{\tr}).
\]
If we set $\regClass$ to be the set of all functions defined on $\{ 0,1 \}$, $\Tilde{\epo}$ is indeed the minimizer from Step 2. When $\delta_\tr > 0$, this approach is called approximate balancing \cite{wang2017minimal,zubizarreta2015stable} and when $\delta_\tr = 0$, this is exact balancing \cite{chan2016globally}. Replacing the objective of \eqref{eq:approxExactBal} with the negative entropy yields the so-called entropy balancing method \cite{hainmueller2012entropy}. Overall, these methods are versions of \cbdm~:
\begin{proposition} \label{prop1}
Let $f_k^0(t,x) = (1 - t) f_k(x)$ and $f_k^1(t,x) = t f_k(x)$. Consider $\ipmFctClass = \{ f_k^t: t \in \{0,1\},k \in  \{ 1, \ldots, K \} \}$ and the \cbdm~weights for $\regHyp>0$:
\[
\wVect(\regHyp) = \argmin_{\wVect \in \Lambda_{0,1}} \ipm^2_{\ipmFctClass}(\wDistr, \empTar) + \regHyp \| \wVect \|_2^2,
\]
where $\Lambda_{0,1} = \{ \wVect = (\wVect^0, \wVect^1) \geq 0: \sum_{i=1}^{n_t} w_{i}^t = n_t/n, \,\, t\in \{0,1\} \}$ and $\empTar = \hat{\source}_{\treat} \otimes \hat{\source}_{\cov}$. Then for any $\regHyp > 0$, there is $(\delta_0, \delta_1) > 0$ such that:
\[
\forall t\in \{0,1 \}, \quad \wVect^t(\regHyp) = \Tilde{\wVect}^\tr( \delta_\tr).
\]
\end{proposition}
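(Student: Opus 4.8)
The plan is to evaluate the $\ipm$ term explicitly and then show that the minimizer decouples across the two treatment groups. Since $\ipmFctClass$ is finite, $\ipm_{\ipmFctClass}$ is a maximum of absolute expectation gaps. Because $\empTar=\hat{\source}_{\treat}\otimes\hat{\source}_{\cov}$ is a product measure and each $f_k^\tr(t,x)$ factorizes into a function of $t$ times $f_k(x)$, the target expectations split as $\mathbb{E}_{\empTar}[f_k^0]=(n_0/n)\bar f_k$ and $\mathbb{E}_{\empTar}[f_k^1]=(n_1/n)\bar f_k$, while $\mathbb{E}_{\wDistr}[f_k^0]=\sum_{i\le n_0} w_i^0 f_k(\cov_i)$ selects only the control group (and symmetrically for $f_k^1$). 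Using $\sum_i w_i^\tr=n_\tr/n$ to pass to simplex-normalized weights, a direct computation yields
\[
\ipm_{\ipmFctClass}(\wDistr,\empTar)=\max_{\tr\in\{0,1\}} (n_\tr/n)\,D^\tr(\wVect^\tr),\qquad D^\tr(\wVect^\tr):=\max_k|b_k^\tr(\wVect^\tr)|,
\]
where $b_k^\tr(\wVect^\tr)=n_\tr^{-1}\sum_i n w_i^\tr f_k(\cov_{i+\tr n_0})-\bar f_k$ is exactly the group-$\tr$ imbalance of $f_k$ from \eqref{eq:approxExactBal}. Hence the \cbdm~objective becomes $\max_{\tr}(n_\tr/n)^2 D^\tr(\wVect^\tr)^2+\regHyp(\|\wVect^0\|_2^2+\|\wVect^1\|_2^2)$.

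The main obstacle is that, whereas \eqref{eq:approxExactBal} decouples across groups, this objective couples $\wVect^0$ and $\wVect^1$ through the outer maximum, so one cannot simply split the program. I would resolve this with a swapping argument. First, the objective is strictly convex (a maximum of squared affine terms, which is convex, plus the strictly convex $\regHyp\|\cdot\|_2^2$) on the compact set $\Lambda_{0,1}$, so the minimizer $\wVect(\regHyp)$ exists and is unique. Define $\delta_\tr:=D^\tr(\wVect^\tr(\regHyp))$. I then claim $\wVect^\tr(\regHyp)$ minimizes $\|\wVect^\tr\|_2^2$ over $\{n\wVect^\tr/n_\tr\in\Lambda_{n_\tr}:\max_k|b_k^\tr|\le\delta_\tr\}$, the feasible set of \eqref{eq:approxExactBal}: if some feasible $\wVect^\tr$ had strictly smaller norm, substituting it for $\wVect^\tr(\regHyp)$ leaves the other group and its simplex constraints intact, cannot raise $(n_\tr/n)^2 D^\tr(\cdot)^2$ (hence not the outer maximum), and strictly lowers the regularizer, contradicting optimality. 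As $\|\cdot\|_2^2$ is strictly convex over this convex feasible set, its minimizer is unique, so $\wVect^\tr(\regHyp)=\tilde{\wVect}^\tr(\delta_\tr)$ for $\tr\in\{0,1\}$.

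It remains to verify $(\delta_0,\delta_1)>0$, which is where $\regHyp>0$ is needed and which I regard as the delicate point. I would first show that at the optimum each group is either binding in the outer maximum or sits at the unconstrained minimizer of $\|\cdot\|_2^2$ (uniform weights): otherwise the slack in the maximum permits a small move toward uniform weights that strictly lowers the regularizer without changing the $\ipm$, a contradiction. A first-order perturbation then rules out exact balance when $\regHyp>0$, because along a feasible direction toward uniform weights the regularizer decreases at order $\epsilon$ while $\ipm^2$ grows only at order $\epsilon^2$ near a balanced point; hence $\delta_\tr>0$, except in the degenerate case where uniform weights already balance group $\tr$ exactly (i.e.\ no confounding through the $f_k$), which the statement implicitly excludes. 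Everything else reduces to the bookkeeping of the closed form in the first step.
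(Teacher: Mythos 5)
Your proof is correct and follows essentially the same route as the paper's: compute $\ipm_{\ipmFctClass}(\wDistr,\empTar)$ in closed form as a maximum of per-group, per-function imbalances, observe that the \cbdm~optimum is feasible for the balancing program \eqref{eq:approxExactBal} at its own achieved imbalance level, and conclude that it is the minimum-norm feasible point. The paper compresses your swapping argument into a single sentence (``it is also the one with minimum $L^2$ norm, because of the regularization term'') and uses the thresholds $\delta_\tr = \nObs\delta/\nObs_\tr$ with $\delta$ the optimal $\ipm$ value, whereas you use the per-group imbalance $D^\tr(\wVect^\tr(\regHyp))$; both choices are valid, and making the swap explicit is exactly the right way to handle the coupling through the outer maximum that the paper glosses over. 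Where you go beyond the paper is the strict positivity of $(\delta_0,\delta_1)$, which the paper's proof silently ignores. One correction there: the degenerate case in which uniform weights already balance group $\tr$ exactly is not ``implicitly excluded'' by the statement, and it does not need to be. In that case your own strict-convexity argument forces $\wVect^\tr(\regHyp)$ to equal the uniform vector, and the uniform vector is the minimum-norm point of the entire affine slice $\{\tilde{\wVect}\ge 0,\ \sum_i \tilde{\w}_i = \nObs_\tr/\nObs\}$, hence the unique minimum-norm point of the feasible set of \eqref{eq:approxExactBal} for \emph{every} $\delta_\tr>0$; so one may simply pick any positive $\delta_\tr$ there, and the proposition holds with no excluded cases. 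With that one-line patch your argument is complete, and in fact more careful than the published proof.
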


\subsection{Generalized Optimal Matching (GOM)}\label{sec:GOM}
The GOM framework~\cite{kallus2016generalized, kallus2017framework} estimates expected potential outcomes for the treated group, that is $\mathbb{E}[\out(t)|\treat = 1]$. For $\tr = 1$, this is simply $\mathbb{E}[Y|T=1]$, so weights are needed only for the control group. GOM extends the methods described in Section \ref{sec:approxBal} to larger functional classes and a common version is:
\[
\Tilde{\wVect}^{0}(\regHyp) = \argmin_{\Tilde{\wVect} \in \Lambda_{\nObs_0}} \ipm^2_{\ipmFctClass_{0}}(\hat{\source}_{\cov|\treat = 0}^{\Tilde{\wVect}},\hat{\source}_{\cov|\treat = 1}) + \regHyp \| \Tilde{\wVect} \|_2^2,
\]
where we consider $\hat{\source}_{\cov|\treat = 0}^{\Tilde{\wVect}} = \sum_{i=1}^{\nObs_0} \Tilde{w}_i \dirac{\cov_i}$, $\hat{\source}_{\cov|\treat = 1} = n_1^{-1} \sum_{i = n_0 +1}^{n} \dirac{\cov_i}$ and $\ipmFctClass_0$ is the unit ball of some normed functional space -- if it is an RKHS, then typically $\ipm_{\ipmFctClass_0} = \mmd$. We select target distribution $\target$ such that $\target_\cov = \source_{\cov | \treat = 1}$. With this minor change, GOM is therefore also a special case of \cbdm~:
\begin{proposition} \label{prop2}
Define $f^0(t,x) = (1 - t) f(x)$ and $f^1(t,x) = t f(x)$ for any $f \in \ipmFctClass_0$. Consider the \cbdm~weights $\wVect(\regHyp)$ defined in Prop.~\ref{prop1} but instead with $\ipmFctClass = \{ f^t: f \in \ipmFctClass_0, \,\, t \in \{0,1\}\}$ and $\empTar = \hat{\source}_{\treat} \otimes \hat{\source}_{\cov|\treat = 1}$. Then we have:
\[
\wVect^0(\regHyp) = \frac{n_0}{n} \Tilde{\wVect}^0(\regHyp), \quad \text{and} \quad 
\wVect^1(\regHyp) = \nObs^{-1} \mathbf{1},
\]
and the two methods return the same estimator $\Tilde{\epo}(\tr)$.
\end{proposition}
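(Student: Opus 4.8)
The plan is to exploit two structural features of this instance of \cbdm: the product form of $\empTar = \hat{\source}_{\treat} \otimes \hat{\source}_{\cov|\treat = 1}$, and the fact that $\ipmFctClass$ is the union of the families $\{f^0 : f \in \ipmFctClass_0\}$ and $\{f^1 : f \in \ipmFctClass_0\}$. First I would write out the relevant expectations. Since $f^0(t,x) = (1-t)f(x)$ and $f^1(t,x) = tf(x)$ vanish on the treated and control groups respectively, one gets $\mathbb{E}_{\wDistr}[f^0] = \sum_{i=1}^{\nObs_0} \w^0_i f(\cov_i)$ and $\mathbb{E}_{\wDistr}[f^1] = \sum_{i=1}^{\nObs_1} \w^1_i f(\cov_{\nObs_0 + i})$, while independence under the product measure gives $\mathbb{E}_{\empTar}[f^0] = (\nObs_0/\nObs)\,\bar{f}_1$ and $\mathbb{E}_{\empTar}[f^1] = (\nObs_1/\nObs)\,\bar{f}_1$, with $\bar{f}_1 = \nObs_1^{-1}\sum_{i=\nObs_0+1}^{\nObs} f(\cov_i)$. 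Because a supremum over a union is a maximum of suprema, this yields $\ipm_{\ipmFctClass}(\wDistr, \empTar) = \max(A^0, A^1)$, where $A^0 = A^0(\wVect^0)$ depends only on the control weights and $A^1 = A^1(\wVect^1)$ only on the treated weights.

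The crux is then to dispatch $\wVect^1$. I would observe that the uniform choice $\wVect^1 = \nObs^{-1}\mathbf{1}$ is simultaneously optimal for both terms of the objective involving $\wVect^1$: it makes $\mathbb{E}_{\wDistr}[f^1] = (\nObs_1/\nObs)\bar{f}_1 = \mathbb{E}_{\empTar}[f^1]$ for every $f$, hence $A^1 = 0$, and among all feasible treated weights (those summing to $\nObs_1/\nObs$) it is the unique minimizer of $\|\wVect^1\|_2^2$. Since the regularizer splits as $\|\wVect^0\|_2^2 + \|\wVect^1\|_2^2$ and $A^0 \geq 0$, replacing any feasible $\wVect^1$ by $\nObs^{-1}\mathbf{1}$ can only decrease $\max(A^0,A^1)^2 + \regHyp\|\wVect^1\|_2^2$ while leaving the $\wVect^0$ part untouched. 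Combined with strict convexity of the full objective---$\ipm^2_{\ipmFctClass}(\wDistr,\empTar)$ is convex in $\wVect$ and $\regHyp\|\wVect\|_2^2$ is strictly convex for $\regHyp > 0$, so the minimizer is unique---this forces $\wVect^1(\regHyp) = \nObs^{-1}\mathbf{1}$. This is where I expect the main subtlety: the $\max$ couples $A^0$ and $A^1$ and does not split additively, so the decoupling must be justified through this double optimality of the uniform weights rather than by separating the objective.

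Once $\wVect^1$ is pinned to the uniform weights, $A^1 = 0$ and $\ipm^2_{\ipmFctClass}(\wDistr,\empTar)$ reduces to $(A^0(\wVect^0))^2$, so the residual problem is to minimize $(A^0(\wVect^0))^2 + \regHyp\|\wVect^0\|_2^2$ over control weights summing to $\nObs_0/\nObs$. Here I would apply the reparametrization $\w^0_i = (\nObs_0/\nObs)\,\Tilde{\w}_i$, a bijection from this constrained set onto the simplex $\Lambda_{\nObs_0}$. Under it $A^0(\wVect^0) = (\nObs_0/\nObs)\,\ipm_{\ipmFctClass_0}(\hat{\source}_{\cov|\treat = 0}^{\Tilde{\wVect}}, \hat{\source}_{\cov|\treat = 1})$ and $\|\wVect^0\|_2^2 = (\nObs_0/\nObs)^2\|\Tilde{\wVect}\|_2^2$, so the whole \cbdm~objective equals $(\nObs_0/\nObs)^2$ times the GOM objective. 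As the two differ only by this positive constant and their feasible sets are in bijection, their minimizers coincide, giving $\wVect^0(\regHyp) = (\nObs_0/\nObs)\,\Tilde{\wVect}^0(\regHyp)$.

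Finally, for the estimator equivalence I would substitute these weights into the Step 2 minimizer over all functions on $\{0,1\}$, which is the within-group weighted mean of $\out$. For the treated group the uniform weights give $\sum_i \w^1_i \out_{\nObs_0 + i} \big/ \sum_i \w^1_i = \nObs_1^{-1}\sum_{i=\nObs_0+1}^{\nObs}\out_i$, matching GOM's estimate of $\mathbb{E}[\out \mid \treat = 1]$; for the control group the common factor $\nObs_0/\nObs$ cancels between numerator and denominator, leaving $\sum_{i=1}^{\nObs_0}\Tilde{\w}^0_i \out_i$, exactly GOM's weighted estimate of $\mathbb{E}[\out(0)\mid\treat = 1]$. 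Hence both methods return the same $\Tilde{\epo}$.
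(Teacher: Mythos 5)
Your proof is correct and follows essentially the same route as the paper's: the same decomposition of the squared \ipm~into a maximum of two group-wise terms $\max(\Delta_0,\Delta_1)$ each depending only on $\wVect^0$ or $\wVect^1$, the same observation that uniform treated weights simultaneously zero out the treated discrepancy and minimize the regularizer, and the same change of variables $\Tilde{\wVect} = \nObs\wVect^0/\nObs_0$ showing the residual problem is $(\nObs_0/\nObs)^2$ times the GOM objective. If anything, you are more explicit than the paper in two places it leaves brief: the justification that the $\max$ coupling cannot prevent pinning $\wVect^1$ to the uniform weights (the paper says only ``it is easy to see''), and the final verification that the two weighted within-group means coincide.
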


The GOM framework itself encompasses many existing matching procedures such as: nearest neighbor matching, caliper matching, coarsened exact matching and mean-matched sampling (see \cite{kallus2016generalized,kallus2017framework}). By transitivity, all such matching methods can therefore be expressed as special cases of the \cbdm~framework.

\subsection{Non-parametric CBGPS} \label{sec:npCBPS}
\cite{fong2018covariate} introduced a non-parametric extension of CBPS from \cite{imai2014covariate} to handle continuous (univariate) treatments. It seeks weights that maximizes the empirical log-likelihood while setting the expectations of $\treat$ and $\cov$, and their covariance to zero. \cite{yiu2018covariate} consider minimizing $\| \wVect \|^2_2$ in place of the empirical log-likelihood. A slightly weaker version of npCBGPS can hence be written this way, as \cbdm~weights:
\[
\wVect = \argmin_{\wVect \in \simplex}  \ipm^2_{\ipmFctClass}(\wDistr, \empTar) + \regHyp \| \wVect \|_2^2,
\]
where $\empTar = \hat{\source}_{\treat} \otimes \hat{\source}_{\cov}$ and $\ipmFctClass = \{t, \, x^{(k)}, \, t.x^{(k)}: k \in [1,\xdim]\}$, and $\cov$ has dimension $\xdim$ ($\treat$ is univariate here). 

\section{THEORY}

In this section, we provide further evidence of the fact that the \cbdm ~framework offers the right generalization of the balancing weight approach as we establish theoretical guarantees for \cbdm~that can be deemed as extensions of properties known for binary treatments.

\subsection{Causal Learning Bound} \label{sec:causalBounds}

As a justification for GOM (see Section \ref{sec:GOM}), \cite{kallus2016generalized, kallus2017framework} derived a bound on the difference between the weighted outcome average in the control group and $\mathbb{E}[\out(0)|\treat = 1]$ using an integral probability metric $\ipm_\ipmFctClass$ such that $\responseFct(0,\cdot) \in \ipmFctClass$. However, the difficulty in the current setting with arbitrary treatment type is that we don't have access to an explicit estimator of $\epo$ (such like the weighted average for GOM). Instead, we have to include a regression step in the analysis (Step 2). Using classical tools from learning theory, we derive a bound for \cbdm~(Def. \ref{def:cbdmIPM}) that provides, as with the bound for GOM, some insights on what class of functions $\ipmFctClass$ to choose. First, let's recall the definition of the Rademacher complexity.
\begin{definition} [Rademacher Complexity] \label{def:rademacher}
For some fixed array $\treatSample = \{ \treat_1, \ldots, \treat_\nObs \}$ of treatment values and $\radVarVect = (\radVar_i)_{i = 1}^{\nObs}$ i.i.d.~Rademacher variables (i.e.,~$\mathbb{P}(\radVar_i = -1) = \mathbb{P}(\radVar_i = +1) = 1/2$). Let:
\[
\radComp = \frac{2}{\nObs} \mathbb{E}_{\radVarVect}\left[\sup_{\regFct \in \regClass} \sum_{i =1}^{\nObs} \radVar_i \regFct(\treat_i) \right]. 
\]
\end{definition}
The Rademacher complexity is a classical notion in learning theory \cite{shalev2014understanding}. For a wide range of predictor classes, from linear regression to kernel methods to neural networks \cite{bartlett2002rademacher}, the Rademacher complexity converges to zero (usually at speed $O(\sqrt{(\log(\nObs)/ \nObs)}$). 
Next, call $\regFctMin \in \regClass$ the minimizer of the weighted empirical risk $\sum_i \w_i (\out_i - \regFct(\treat_i))^2$ from Step 2 and from now on, we will say that $\regClass$ is $\boundedGees$-bounded if $\forall \regFct,\regFct' \in \regClass$,  $|\regFct(\z) - \regFct'(\z)| \leq \boundedGees$ on $\txSupp$. Under Assumption \ref{ass:ignorability}:
\begin{theorem} \label{thm:causalBound} Assume $\forall \regFct \in \regClass, -2\condExp\regFct + \regFct^2$ is ($\gamma/2$, $\epsilon/2$)-approximable by $\ipmFctClass$ and that $\regClass$ is $\boundedGees$-bounded. Consider fixed data $(\treat_i,\cov_i)_{i = 1}^{\nObs}$ and $\wVect$ some weights built on them such that $\w_i \leq \clip/\nObs, \forall i$. We have $\forall \nObs > 0, \, \delta \in (0,1]$, with probability at least $1-\delta$ over the outcomes $\out_i$ :
\begin{align*}
   \causalRisk(\regFctMin) \leq & \inf_{\regFct \in \regClass} \causalRisk(\regFct)+ \clip\yConstRange \cdot \left(  \radComp + \frac{\boundedGees^2_\delta}{12 \boundedGees \nObs} \right)\\
   & + \ipmCst . \ipm_{\ipmFctClass}(\wDistr,\txTarget) + \sigma \boundedGees_{\delta} \| \wVect \|_2  + \epsilon ,
\end{align*}
where $\boundedGees_{\delta} = 4 \boundedGees \sqrt{2 \log(1/\delta)}$.
\end{theorem}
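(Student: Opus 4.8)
My plan is to reduce the causal risk to an ordinary squared-error risk for $\responseFct$ under the interventional law $\txTarget$, then run a standard excess-risk decomposition for the Step~2 minimizer $\regFctMin$ and control the two resulting error sources, distribution shift and outcome noise, separately. First I would show that, up to an additive constant that does not depend on $\regFct$, $\causalRisk(\regFct)$ equals $\mathbb{E}_{\txTarget}[(\responseFct(\treat,\cov)-\regFct(\treat))^2]$. This follows from \eqref{eq:epoEquality}, i.e.\ $\epo(\tr)=\mathbb{E}_{\xDistr}[\responseFct(\tr,\cov)]$: expanding $(\responseFct-\regFct(\treat))^2$ around $\epo(\treat)$ under $\txTarget$ kills the cross-term (because $\mathbb{E}[\responseFct(\treat,\cov)-\epo(\treat)\mid\treat]=0$ when $\treat\independent\cov$) and leaves exactly $\causalRisk(\regFct)$ plus a $\regFct$-free term. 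Hence the excess causal risk of $\regFctMin$ equals its excess $\txTarget$-risk, which is what I would bound.

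Let $\regFct^\star\in\argmin_{\regFct\in\regClass}\causalRisk(\regFct)$, write $L_{\wVect}(\regFct)=\mathbb{E}_{\wDistr}[(\responseFct-\regFct(\treat))^2]=\sum_i\w_i(\responseFct(\treat_i,\cov_i)-\regFct(\treat_i))^2$ for the deterministic weighted risk, and $\hat R_{\wVect}(\regFct)=\sum_i\w_i(\out_i-\regFct(\treat_i))^2$ for the random weighted empirical risk of Step~2. Using $\hat R_{\wVect}(\regFctMin)\le\hat R_{\wVect}(\regFct^\star)$ together with $\mathbb{E}_{\out}\hat R_{\wVect}(\regFct)=L_{\wVect}(\regFct)+\sum_i\w_i\,\mathrm{Var}(\out_i\mid\treat_i,\cov_i)$, I would telescope the excess $\txTarget$-risk into (a) two distribution-shift gaps $\mathbb{E}_{\txTarget}[(\responseFct-\regFct(\treat))^2]-\mathbb{E}_{\wDistr}[(\responseFct-\regFct(\treat))^2]$ at $\regFctMin$ and $\regFct^\star$, and (b) a fluctuation gap $\Phi(\regFctMin)-\Phi(\regFct^\star)$ with $\Phi(\regFct)=\mathbb{E}_{\out}\hat R_{\wVect}(\regFct)-\hat R_{\wVect}(\regFct)$. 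The point of taking differences is that all $\regFct$-free pieces, namely $\responseFct^2$, the variance sum, and the reduction constant, cancel, so that $\responseFct^2$, which is not assumed approximable, never has to be controlled through the IPM.

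For (a), after the $\responseFct^2$ cancellation each gap reduces to $(\mathbb{E}_{\txTarget}-\mathbb{E}_{\wDistr})(-2\responseFct\regFct+\regFct^2)$. Since $-2\responseFct\regFct+\regFct^2$ is $(\gamma/2,\epsilon/2)$-approximable by $\ipmFctClass$, I would write it as $\sum_j\lambda_j f_j+r$ with $f_j\in\ipmFctClass$, $\|\boldsymbol\lambda\|_1\le\gamma/2$, $\|r\|_\infty\le\epsilon/2$, bound each $|(\mathbb{E}_{\txTarget}-\mathbb{E}_{\wDistr})(f_j)|$ by $\ipm_{\ipmFctClass}(\wDistr,\txTarget)$ and the residual by its sup-norm. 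Adding the contributions of $\regFctMin$ and $\regFct^\star$ produces the $\ipmCst\cdot\ipm_{\ipmFctClass}(\wDistr,\txTarget)+\epsilon$ terms, the factor $\tfrac12$ in the approximability hypothesis being exactly what makes the two halves recombine into these constants.

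For (b), a one-line computation collapses $\Phi(\regFctMin)-\Phi(\regFct^\star)$ to the weighted noise process $2\sum_i\w_i(\regFctMin(\treat_i)-\regFct^\star(\treat_i))\,\eta_i$, where $\eta_i=\out_i-\responseFct(\treat_i,\cov_i)$ is mean-zero with $|\eta_i|\le\yConstRange/2$ and variance at most $\sigma^2$. Passing to $\sup_{\regFct\in\regClass}$ (as $\regFctMin$ is data-dependent) and applying a Bousquet--Talagrand inequality for the supremum of a weighted empirical process, the wimpy variance $\le 4\sigma^2\boundedGees^2\|\wVect\|_2^2$ yields the $\sigma\boundedGees_{\delta}\|\wVect\|_2$ term, the per-coordinate range $\le\clip\yConstRange\boundedGees/\nObs$ yields the $\clip\yConstRange\,\boundedGees_{\delta}^2/(12\boundedGees\nObs)$ term, and the expected supremum is handled by symmetrization followed by Ledoux--Talagrand contraction. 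The hard part is this last reduction: turning the weighted, noise-driven supremum into the unweighted Rademacher complexity $\radComp$ with the clean prefactor $\clip\yConstRange$ requires using $\boundedGees$-boundedness to bound the increments uniformly, the outcome bound $\yConstRange/2$ as the contraction constant for the noise, and $\w_i\le\clip/\nObs$ as a coefficient comparison that pulls out $\clip$, while ensuring the variance-type term stays separate so it scales with $\|\wVect\|_2$ rather than with $\clip$. Collecting (a) and (b) and recalling the reduction of the first paragraph gives the stated bound.
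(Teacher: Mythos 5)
Your proposal is correct and follows essentially the same architecture as the paper's proof: the same bias--variance reduction of $\causalRisk$ to the $\condExp$-based quadratic risk under $\txTarget$, the same telescoping via the ERM property of $\regFctMin$ into two distribution-shift gaps (each handled by the $(\gamma/2,\epsilon/2)$-approximability, recombining into $\ipmCst\,\ipm_{\ipmFctClass}(\wDistr,\txTarget)+\epsilon$ with all $\regFct$-free terms, including $\condExp^2$, cancelling), the same collapse of the fluctuation term to the noise process $2\sum_i \w_i(\out_i-\condExp(\treat_i,\cov_i))(\regFctMin(\treat_i)-\regFct^\star(\treat_i))$, and the same symmetrization-plus-contraction bound for its conditional expectation. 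The one genuine divergence is the deviation inequality. The paper applies Pinelis's Bernstein-type inequality for martingales to the Doob martingale of the supremum (revealing the $\out_i$ one at a time), with increment bound $a=2\clip\yConstRange\boundedGees/\nObs$ and conditional variance sum $b^2=4\yVar\boundedGees^2\|\wVect\|_2^2$; this gives a deviation of exactly $\tfrac{2}{3}a\log(1/\delta)+b\sqrt{2\log(1/\delta)}$ above the expectation, with no cross-term, which is precisely what produces the stated constants. Your Bousquet--Talagrand route is structurally sound here (conditionally on the fixed $(\treat_i,\cov_i)$ the $\out_i$ are independent, so a Klein--Rio-type version for non-identically distributed summands applies, with the wimpy variance and range you identify), but its variance proxy is $\sigma^2_{\mathrm{wimpy}}+2b\,\mathbb{E}[Z]$, and eliminating the cross term $2b\,\mathbb{E}[Z]$ by AM--GM forces either the coefficient of $\radComp$ above $\clip\yConstRange$ (e.g.\ to $\tfrac{10}{7}\clip\yConstRange$ if you keep the range term within budget) or the range term beyond $\clip\yConstRange\boundedGees_\delta^2/(12\boundedGees\nObs)$. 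So your argument proves a bound of the identical form but with slightly worse constants; what the martingale route buys is exactly the clean separation of expectation, variance (scaling with $\|\wVect\|_2$), and range (scaling with $\clip/\nObs$) terms needed to match the theorem verbatim.
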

Any $\ipm$ respects the triangle inequality, in particular $\ipm(\wDistr,\txTarget) \leq \ipm(\wDistr,\empTar) + \ipm(\empTar,\txTarget)$. The term $\ipm(\empTar,\txTarget)$ does not depend on the weights and will usually converge to zero at speed $O(\nObs^{-1/2})$ when $\ipm = \mmd$ \cite{gretton2012kernel}, and at speed $O(\nObs^{-1/ \joindim})$ when $\ipm = \wass$ \cite{weed2019sharp}. Therefore, the \cbdm~weights (Def. \ref{def:cbdmIPM}), with $\reg{\nObs}(\wVect) = \| \wVect \|^2_2$, aim at minimizing the above upper bound. Also, the bound in Theorem \ref{thm:causalBound} gives some insight on how to choose $\ipmFctClass$, and why it might not be enough minimizing some functional covariance between $\treat$ and $\cov$. Indeed, this would correspond to controlling for a class $\ipmFctClass$ composed of products of functions that depend only on either $\tr$ or $\x$ (e.g.~see Section \ref{sec:npCBPS}): $\responseFct(\tr,\x)$ might not always be well approximated by such a class of functions.

\subsection{A Dual Approach} \label{sec:firstRobust}

It has been shown recently that approximate and exact balancing methods (Section \ref{sec:approxBal}) admit a dual formulation: these weights actually correspond to an implicit propensity score model fitted with some specific loss function \cite{wang2017minimal,zhao2019covariate,zhao2017entropy}. Interestingly, this implies that if this model is well specified, regardless of the response function $\responseFct$ the treatment effect estimator will converge \cite{zhao2017entropy}, and if the outcome model (i.e.~$\ipmFctClass$) is also well-specified the estimator becomes semi-parametric efficient \cite{wang2017minimal,zhao2017entropy}. We extend this dual interpretation to non-binary treatments as we show that the \cbdm~weights from Def.~\ref{def:cbdmIPM} admit a similar dual formulation. Call $\rho^*(x) = \sup_{y} yx - \rho(y)$ the Legendre transform of $\rho$.

\begin{theorem} \label{thm:dualProblem}
Let $\ipm_\ipmFctClass = \mmd_\kernel$ where $\kernel$ is a continuous kernel and $\rho(x) = +\infty$ outside $[0,\clip]$. A dual problem of the \cbdm~weights $\wVect$ from Definition \ref{def:cbdmIPM} is:
\begin{align} \label{eq:dualProblem}
    \min_{\mu, \boldsymbol{\alpha}} \,\,\, \frac{1}{\nObs} &\sum_i \rho^*\left(- \mu - \boldsymbol{\alpha}^{T} \mathbf{k}(\joinVar_i) \right) + \mu \\
    & + \frac{\regHyp}{4 \nObs} \boldsymbol{\alpha}^T \mathbf{K} \boldsymbol{\alpha} + \boldsymbol{\alpha}^T   \mathbb{E}_{\empTar}[\mathbf{k}(\joinVar)], \nonumber
\end{align}
where $\boldsymbol{\alpha} = (\alpha(z))_{z \in \hat{\txSupp}}$, $\mathbf{k}(\cdot) = (\kernel(z,\cdot))_{z \in \hat{\txSupp}}$, $\mathbf{K} = (\kernel(z,z'))_{z,z' \in \hat{\txSupp}}$ and $\hat{\txSupp} = \text{supp}(\hat{\source}^\nObs_{\joinVar}) \bigcup \text{supp}(\empTar)$. If $\mu^0$ and $\boldsymbol{\alpha}^0$ are solutions of \eqref{eq:dualProblem} then we have $\forall i$, $n \w_i = \frac{d\rho^*}{d x}(- \mu^0 -   (\boldsymbol{\alpha}^0)^{T} \mathbf{k}(\joinVar_i))$. 

\end{theorem}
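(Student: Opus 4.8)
The plan is to treat the problem in Definition \ref{def:cbdmIPM} (with $\ipm_\ipmFctClass=\mmd_\kernel$) as a convex program in $\wVect$ and compute its Lagrangian--Fenchel dual. First I would rewrite the objective through the kernel mean embedding: since the unit-ball RKHS IPM is the MMD,
\[
\mmd^2_\ipmFctClass(\wDistr,\empTar) = \Big\| {\textstyle\sum}_i \w_i \kernel(\joinVar_i,\cdot) - \mathbb{E}_{\empTar}[\kernel(\joinVar,\cdot)] \Big\|_\rkhs^2 ,
\]
which is a squared RKHS norm of an affine map, hence a quadratic in $\wVect$. Because $\rho=+\infty$ outside $[0,\clip]$, the box constraints $0\le \nObs\w_i\le\clip$ are already encoded in $\reg{\nObs}$, so the only explicit constraint left is $\sum_i\w_i=1$.

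Next I would dualize in two moves. I attach a scalar multiplier $\mu$ to the constraint $\sum_i\w_i=1$, and I linearize the quadratic through the Fenchel identity $\|u\|_\rkhs^2=\sup_{\alpha\in\rkhs}\big(2\inner{\alpha}{u}-\|\alpha\|_\rkhs^2\big)$, which introduces an RKHS dual variable $\alpha$ and, after the rescaling below, accounts for the term $\frac{\regHyp}{4\nObs}\boldsymbol{\alpha}^T\mathbf{K}\boldsymbol{\alpha}$. The resulting Lagrangian is convex in $\wVect$ over the compact feasible set and strictly concave and coercive in $(\mu,\alpha)$, so a minimax theorem (Sion), together with Slater's condition---the uniform weights $\w_i=1/\nObs$ are strictly feasible as soon as $\clip>1$---lets me interchange the inner $\min_\wVect$ with the outer $\sup_{\mu,\alpha}$ and gives strong duality.

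Once the order is swapped, the $\wVect$-dependent part of the Lagrangian separates across $i$, and minimizing each coordinate produces exactly a $\rho^*$ term by the definition of the Legendre transform, with argument an affine function of $\mu$ and $\inner{\alpha}{\kernel(\joinVar_i,\cdot)}=\alpha(\joinVar_i)$. I would then run a representer argument: the dual objective sees $\alpha$ only through $\|\alpha\|_\rkhs^2$, the evaluations $\alpha(\joinVar_i)$, and $\inner{\alpha}{\mathbb{E}_{\empTar}[\kernel(\joinVar,\cdot)]}=\mathbb{E}_{\empTar}[\alpha(\joinVar)]$, all of which depend only on the projection of $\alpha$ onto $\text{span}\{\kernel(z,\cdot):z\in\hat{\txSupp}\}$, whereas the penalty $\|\alpha\|_\rkhs^2$ is strictly increased by any orthogonal component. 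Hence the optimal $\alpha$ lives in that span, which is finite-dimensional since $\empTar$ is an empirical measure, and writing $\alpha=\sum_{z\in\hat{\txSupp}}\alpha(z)\kernel(z,\cdot)$ turns $\|\alpha\|_\rkhs^2$, $\alpha(\joinVar_i)$ and $\mathbb{E}_{\empTar}[\alpha]$ into $\boldsymbol{\alpha}^T\mathbf{K}\boldsymbol{\alpha}$, $\boldsymbol{\alpha}^T\mathbf{k}(\joinVar_i)$ and $\boldsymbol{\alpha}^T\mathbb{E}_{\empTar}[\mathbf{k}(\joinVar)]$. A final affine rescaling of $(\mu,\boldsymbol{\alpha})$ that absorbs the leftover factors of $\regHyp$ and $\nObs$ brings the dual to the stated form \eqref{eq:dualProblem}.

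Finally, the recovery identity $\nObs\w_i=\frac{d\rho^*}{dx}(-\mu^0-(\boldsymbol{\alpha}^0)^T\mathbf{k}(\joinVar_i))$ follows by reading off the stationarity condition of the per-coordinate inner minimization: the minimizer of $s\beta+\rho(\beta)$ is $\beta=(\rho^*)'(-s)$, evaluated at the optimal dual point; strict convexity of $\rho$ makes $\rho^*$ differentiable, so this is single-valued. The main obstacle I anticipate is the duality bookkeeping---justifying the min--max interchange rigorously in the infinite-dimensional RKHS (coercivity and attainment of the sup in $\alpha$, the constraint qualification, and well-definedness of $\mathbb{E}_{\empTar}[\kernel(\joinVar,\cdot)]$), and tracking the constants through the rescaling so that the $\frac{\regHyp}{4\nObs}$, the $\frac1\nObs$ multiplying $\rho^*$, and the clean argument $-\mu-\boldsymbol{\alpha}^T\mathbf{k}(\joinVar_i)$ all land correctly. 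The representer reduction and the conjugate first-order condition are, by comparison, routine.
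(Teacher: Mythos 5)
Your proposal is correct, and it reaches the same dual by the same fundamental strategy as the paper (convex duality in the pair $(\mu,\fct)$ followed by a representer reduction to $\hat{\txSupp}$), but the duality machinery you deploy is genuinely different. The paper first rewrites the weights as a density $\phi = \partial\wDistr/\partial\hat{\source}^\nObs_{\joinVar}$ living in $L^2(\hat{\source}^\nObs_{\joinVar})$, then invokes the Fenchel--Rockafellar theorem (its Theorem \ref{thm:fenchRock}) with an explicit computation of the conjugates $\Theta^*(\phi)=\int\rho(\phi)\,d\nu$ and $\Xi^*$, and recovers the primal weights from the pointwise Fenchel equality $\rho(\phi_0)+\rho^*(\psi_0)=\phi_0\psi_0$ (its Lemma \ref{lem:duality}); crucially, it applies Fenchel--Rockafellar in the direction where the $\rho^*$-problem plays the role of the primal, so the regularity hypotheses are checked on that side (Lipschitzness of $\rho^*$, from Lemma \ref{lem:legTransRho}) and no constraint qualification on the weight side is ever needed. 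You instead stay finite-dimensional in $\wVect$: a scalar multiplier for $\sum_i\w_i=1$, the Fenchel linearization $\|u\|^2_\rkhs=\sup_\alpha(2\inner{\alpha}{u}-\|\alpha\|^2_\rkhs)$, a Sion/Slater interchange, and per-coordinate conjugate minimization producing the $\rho^*$ terms and the recovery formula; I verified that your rescaling ($\fct = \tfrac{2\nObs}{\regHyp}\alpha$, $\mu=-\tfrac{\nObs}{\regHyp}\tilde\mu$, requiring $\regHyp>0$ exactly as the paper's reformulation does) lands on \eqref{eq:dualProblem} up to the irrelevant positive factor $\regHyp/\nObs$. Your route is leaner and more self-contained for Theorem \ref{thm:dualProblem} itself, but it buys less elsewhere: it needs strict feasibility ($\clip>1$, with a separate word for the degenerate case $\clip=1$, where the feasible set is a singleton), whereas the paper's direction of Fenchel--Rockafellar sidesteps this entirely; and the paper's Lemma \ref{lem:duality} is stated for general measures $(\nu,\pi)$, not just empirical ones, which is precisely what gets reused with $\nu=\joinDistr$ and $\pi=\txTarget$ in the proof of Theorem \ref{thm:doubleConsistency} under condition (b) --- a population-level statement your finite-dimensional Lagrangian cannot produce.
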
 

%The above theorem is a consequence of both the representer theorem \cite{hofmann2008kernel} and of a more general result we proved in Appendix \ref{app:dualPb}, based on the Fenchel-Rockafellar Theorem.
Theorem \ref{thm:dualProblem} means that actually the \cbdm~weights (Def.~\ref{def:cbdmIPM}) comes from the estimation of an implicit model of the density ratio $\partial \txTarget / \partial \joinDistr$, fitted with the loss in \eqref{eq:dualProblem}. This model is characterized by a family of densities $\{ \frac{d\rho^*}{d x}(\mu_\fct + \fct): \fct \in \rkhs\}$, where $\mu_\fct \in \mathbb{R}$ is a normalizing constant and $\rkhs$ is the RKHS of $\kernel$. Note if $\rho(x) = x^2$ on $[0,\clip]$, we have $\frac{d\rho^*}{d x}(x) = (x)_{+} \land \clip / 2 $ and if $\rho(x) = x \log(x)$ on $[0,\clip]$ then $\frac{d\rho^*}{d x}(x) = \exp(x - 1)\land \clip $, i.e.~the model is a capped exponential RKHS model \cite{hofmann2008kernel}.

For simplicity, we will focus on parametric models, namely the RKHS $\rkhs$ of $\kernel$ is of finite dimension.

\begin{definition}[Well-Specified Model] \label{def:wellSpec}
Let $\ipm_\ipmFctClass = \mmd_\kernel$, where $\kernel$ is continuous with finite-dimensional RKHS $\rkhs$. We say that the implicit model of the \cbdm~weights in Def.~\ref{def:cbdmIPM}, is well-specified if $\partial \txTarget / \partial \joinDistr = \frac{d\rho^*}{d x}(\mu + \fct)$ with $\mu \in \mathbb{R}$ and $\fct \in \rkhs$.
\end{definition}

\subsection{Consistency Results}

Combining our previous results, we derive sufficient conditions for our dose-response curve estimator $\regFctMin$ to minimize the (causal) risk $\causalRisk$ asymptotically. We assume:
\begin{assumption}[Overlap]\label{ass:overlap} Let $\joinDistr$ be the distribution of $(\treat,\cov)$, and $\txTarget = \tDistr \otimes \xDistr$. We assume:
\begin{enumerate} [label = (\alph*)]
    \item \label{ass:weakOverlap} Absolute continuity: $\txTarget \ll \joinDistr$. 
    \item \label{ass:strongOverlap} Bounded density: $\souTarDensity \leq \souTarDenBound$, for some $\souTarDenBound > 0$.
\end{enumerate}
\end{assumption}
Note, these assumptions are measure-theoretic formulations of the overlap assumptions commonly made in the literature \cite{athey2018approximate,chan2016globally,kallus2016generalized,  wong2017kernel}: When $\treat$ is binary, \ref{ass:weakOverlap} is equivalent to having $0<P(\treat = 1|\cov)<1$ and \ref{ass:strongOverlap} is equivalent to having $\eta<P(\treat = 1|\cov)<1-\eta$ for some $\eta>0$. Under Assumptions \ref{ass:ignorability} and \ref{ass:overlap}:

\begin{theorem} \label{thm:doubleConsistency} We focus on $\ipm_\ipmFctClass \in \{\wass, \mmd_\kernel\}$ where $\kernel$ is a continuous kernel.
Let $\wVect$ be the \cbdm~weights from Def. \ref{def:cbdmIPM} with $\clip > \souTarDenBound$ and $\empTar = \hat{\source}_{\treat} \otimes \hat{\source}_{\cov}$. Assume that $\regClass$ is $B$-bounded and that $\radComp$ (a.s.) converges to zero. Then, we have:
\[
\causalRisk(\regFctMin) \xrightarrow[n \rightarrow +\infty]{a.s.}  \inf_{\regFct \in \regClass} \causalRisk(\regFct), 
\]
if at least one of the following conditions is satisfied:
\begin{enumerate} [label = (\alph*)]
    \item  There exist two classes $\ipmFctClass_0$ and $\regClass_0$ such that: $\responseFct$~is approximable by $\ipmFctClass_0$, and all $\regFct \in \regClass$ are ($\gamma,0$)-approximable by $\regClass_0$ for some fixed $\gamma$ and $\{2 f.g + g'.g'': f \in \ipmFctClass_0, \text{ and } g,g',g'' \in \regClass_0 \cup \{ 0 \} \} \subset \ipmFctClass$.
    \item The implicit model of the \cbdm~weights is well-specified, as defined in Definition \ref{def:wellSpec}. Furthermore, assume that $\rho^*$ is strongly convex and smooth on any finite sub-interval of $(\frac{d\rho}{d x}(0^+),\frac{d\rho}{d x}(\clip^-))$. Note this assumption is true when $\rho(x) = x^2$ or $\rho(x) = x \log x$ on $[0,\clip]$.
\end{enumerate}
\end{theorem}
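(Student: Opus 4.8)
The plan is to handle the two sufficient conditions by two different mechanisms that share one common engine: the clipping constraint $\w_i\le\clip/\nObs$ with $\clip>\souTarDenBound$ forces the re-weighted law to track the interventional law. I would first prove the lemma that $\ipm_\ipmFctClass(\wDistr,\txTarget)\xrightarrow{a.s.}0$. By Assumption~\ref{ass:overlap} the density ratio $r=\souTarDensity$ exists with $r\le\souTarDenBound$; consider the feasible competitor $\w_i^\ast=r(\joinVar_i)/\sum_j r(\joinVar_j)$. It lies in $\simplex$, and since $\nObs^{-1}\sum_j r(\joinVar_j)\to1$ a.s.\ and $\clip>\souTarDenBound$, it satisfies $\w_i^\ast\le\clip/\nObs$ for $n$ large (this is where $\clip>\souTarDenBound$ is used). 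A weighted law of large numbers---in $\rkhs$ for $\mmd$, and via the equivalence of $\wass$ with weak convergence on the compact $\txSupp$ for $\wass$---gives $\ipm_\ipmFctClass(\hat{\source}^{\wVect^\ast}_{\treat,\cov},\txTarget)\to0$, while $\reg{\nObs}(\wVect^\ast)=O(1/\nObs)$ because $\nObs\w_i^\ast\in[0,\clip]$ and $\rho$ is continuous there. Hence the optimal \cbdm~objective tends to $0$; since boundedness of $\rho$ on $[0,\clip]$ forces $\reg{\nObs}(\wVect)=O(1/\nObs)$ at the optimum as well, the \ipm~part $\ipm^2_\ipmFctClass(\wDistr,\empTar)$ also tends to $0$. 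Combining with $\ipm_\ipmFctClass(\empTar,\txTarget)\to0$ (the discussion after Theorem~\ref{thm:causalBound}) and the triangle inequality proves the lemma.

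\textbf{Condition (a).} Here I would invoke Theorem~\ref{thm:causalBound}. First verify its hypothesis: writing $\responseFct=\sum_i\lambda_i f_i+e_\responseFct$ with $f_i\in\ipmFctClass_0$, $\|e_\responseFct\|_\infty\le\epsilon_\responseFct$, and $\regFct=\sum_j\beta_j g_j$ with $g_j\in\regClass_0$, $\|\boldsymbol\beta\|_1\le\gamma$, the inclusion $\{2fg+g'g''\}\subset\ipmFctClass$ shows $-2\responseFct\regFct+\regFct^2$ is $(\gamma'/2,\epsilon'/2)$-approximable by $\ipmFctClass$ with $\gamma'$ controlled by $\gamma$ and $\|\boldsymbol\lambda\|_1$ and error $\epsilon'=O(\epsilon_\responseFct)$, uniformly over $\regFct\in\regClass$. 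In the bound, $\clip\yConstRange\radComp\to0$ by hypothesis, the $\boundedGees_\delta^2/(12\boundedGees\nObs)$ term is $O(\log(1/\delta)/\nObs)$, and $\sigma\boundedGees_\delta\|\wVect\|_2$ vanishes because $\|\wVect\|_2^2=\sum_i\w_i^2\le(\clip/\nObs)\sum_i\w_i=\clip/\nObs$. Taking $\delta=\delta_\nObs$ summable and using Borel--Cantelli over the outcomes (legitimate since $\wVect$ depends only on $(\treat_i,\cov_i)$, so the lemma fixes the $(\treat,\cov)$-randomness first) yields $\limsup_\nObs\causalRisk(\regFctMin)\le\inf_\regFct\causalRisk(\regFct)+\epsilon'$ a.s. Finally I send $\epsilon_\responseFct\to0$ along a countable sequence: for each fixed $\epsilon_\responseFct$, $\gamma'$ is a constant and $\ipmCst\,\ipm_\ipmFctClass(\wDistr,\txTarget)\to0$ by the lemma, so the residual $\epsilon'\to0$. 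The delicate point is this iterated limit---$\gamma'$ blows up as $\epsilon_\responseFct\to0$---so the order of limits matters and the a.s.\ events must be intersected over the countable family.

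\textbf{Condition (b).} Now the approximability hypothesis of Theorem~\ref{thm:causalBound} may fail, so I argue through the dual of Theorem~\ref{thm:dualProblem}, which writes $\nObs\w_i=\hat h_\nObs(\joinVar_i)$ with $\hat h_\nObs(\joinVar)=\frac{d\rho^*}{dx}(-\mu^0-(\boldsymbol\alpha^0)^T\mathbf{k}(\joinVar))$. Under well-specification $r=\frac{d\rho^*}{dx}(\mu^\ast+\fct^\ast)$ with $\fct^\ast$ in the finite-dimensional $\rkhs$. The dual objective \eqref{eq:dualProblem} is a convex $M$-estimation problem whose population limit---obtained using $\mathbb{E}_{\empTar}[\mathbf{k}]\to\mathbb{E}_{\txTarget}[\mathbf{k}]$ and the $O(\regHyp/\nObs)$ regularization---has, by its first-order conditions, a minimizer at which the implied ratio equals $r$. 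Strong convexity and smoothness of $\rho^*$ on compact sub-intervals, together with finite dimensionality, let me promote convergence of the objective to uniform convergence of the fitted ratio: $\|\hat h_\nObs-r\|_\infty\xrightarrow{a.s.}0$ on $\txSupp$.

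It remains to turn weight consistency into risk consistency. For the bounded loss $\ell_\regFct(\tr,\y)=(\y-\regFct(\tr))^2$, the weighted empirical risk $\sum_i\w_i\ell_\regFct(\treat_i,\out_i)$ converges to $\mathbb{E}_\target[\ell_\regFct]$ uniformly over $\regClass$: the weight-error piece is at most $\|\hat h_\nObs-r\|_\infty\sup_\regFct\|\ell_\regFct\|_\infty$, and the centered piece $\nObs^{-1}\sum_i r(\joinVar_i)\ell_\regFct-\mathbb{E}_\source[r\,\ell_\regFct]$ is controlled uniformly by $\radComp$ (since $r\le\souTarDenBound$ and $\ell_\regFct$ is Lipschitz in $\regFct(\tr)$) plus a bounded-difference concentration, using $\mathbb{E}_\source[r\,\ell_\regFct]=\mathbb{E}_\target[\ell_\regFct]$. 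A standard empirical-risk-minimization comparison then gives $\mathbb{E}_\target[(\out-\regFctMin(\treat))^2]\le\inf_\regFct\mathbb{E}_\target[(\out-\regFct(\treat))^2]+o(1)$ a.s. Because $\mathbb{E}_\target[\out\mid\treat]=\epo(\treat)$ by \eqref{eq:epoEquality}, the decomposition $\mathbb{E}_\target[(\out-\regFct(\treat))^2]=\mathbb{E}_\target[(\out-\epo(\treat))^2]+\causalRisk(\regFct)$ holds with a $\regFct$-free first term, so the two excess risks coincide and $\causalRisk(\regFctMin)\to\inf_\regFct\causalRisk(\regFct)$ a.s. I expect the principal obstacle to be the $M$-estimation step of Condition (b): identifying the population minimizer via the first-order conditions and ensuring the regularizer does not bias the limit, which is exactly where well-specification and the strong convexity and smoothness of $\rho^*$ enter.
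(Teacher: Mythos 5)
Your proposal follows essentially the same route as the paper's proof: the same feasibility argument for the density-ratio weights under $\clip > \souTarDenBound$ to drive the $\ipm$ term to zero, the same invocation of Theorem~\ref{thm:causalBound} with an iterated limit over the approximation error for condition (a), and the same dual/well-specification/strong-convexity path to uniform convergence of $\nObs\w_i$ toward the density ratio followed by a Rademacher-based uniform-convergence ERM argument for condition (b). The one step you leave implicit---promoting strong convexity of $\rho^*$ to strong convexity of the empirical dual objective in $(\mu,\boldsymbol{\alpha})$---is where the paper does its hardest work (positive definiteness of the Gram matrix of the basis functions restricted to the region where the ratio is interior, proved via denseness of that region in $\txSupp$ using $\txTarget=\tDistr\otimes\xDistr$), but this is a detail to be filled in rather than a flaw in the approach.
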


Finally, we show that for some $\ipm$s with flexible classes of functions $\ipmFctClass$, $\wDistr$ will indeed mimic asymptotically the interventional distribution $\target$, regardless of the form of $\responseFct$ or $\partial \txTarget / \partial \joinDistr $. Under Assumption \ref{ass:overlap}:

\begin{theorem} \label{thm:distrConv}
Consider $\wVect(\nObs)$ the \cbdm~weights (Def. \ref{def:cbdmIPM}) built on $(\joinVar_{i})_{i = 1}^{\nObs}$ with $\ipm \in \{ \wass, \mmd_{\kernel}\}$, where $\kernel$ is universal, $\clip > \souTarDenBound$ and $\empTar = \hat{\source}_{\treat} \otimes \hat{\source}_{\cov}$. We have :
\[
a.s. \quad \hat{\source}^{\wVect(\nObs)}_{} = \sum_{i = 1}^{\nObs} \w_{i}(\nObs) \dirac{(\joinVar_i,\out_i)} \xrightarrow[n \rightarrow +\infty]{d} \target .
\]
\end{theorem}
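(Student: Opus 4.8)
The plan is to prove the stated weak convergence in two stages: first establish that the weighted covariate marginal $\wDistr=\sum_i\w_i\dirac{(\treat_i,\cov_i)}$ converges in distribution to $\txTarget=\tDistr\otimes\xDistr$ almost surely, and then splice in the outcome coordinate to upgrade this to the full joint convergence to $\target$. Throughout I use that, on the compact support $\txSupp$, both $\wass$ and $\mmd_\kernel$ with $\kernel$ universal metrize the weak topology (the former by Kantorovich--Rubinstein; the latter because a continuous bounded universal kernel makes $\mmd_\kernel(\mu_n,\mu)\to 0$ equivalent to $\mu_n\xrightarrow{d}\mu$), so it suffices to drive the relevant IPM to zero.

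For the marginal stage I would exhibit feasible near-oracle weights and invoke optimality of $\wVect$. Set $\w_i^{*}=\nObs^{-1}\souTarDensity(\joinVar_i)$, well defined by Assumption \ref{ass:overlap}\ref{ass:weakOverlap} and bounded by $\souTarDenBound/\nObs<\clip/\nObs$ using \ref{ass:strongOverlap} and the hypothesis $\clip>\souTarDenBound$; since $\sum_j\w_j^{*}\to 1$ by the SLLN, the renormalized weights lie in the feasible set of Def.~\ref{def:cbdmIPM} for large $\nObs$, with slack coming precisely from the strict inequality $\souTarDenBound<\clip$. For bounded continuous $f$ one has $\sum_i\w_i^{*}f(\joinVar_i)\to\mathbb{E}_{\joinDistr}[\souTarDensity\, f]=\mathbb{E}_{\txTarget}[f]$ by the SLLN, so $\hat{\source}^{\w^{*}}_{\treat,\cov}\xrightarrow{d}\txTarget$ and $\ipm(\hat{\source}^{\w^{*}}_{\treat,\cov},\txTarget)\to 0$. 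Because $\nObs\w_i^{*}\le\souTarDenBound\le\clip$ and $\rho$ is continuous on $[0,\clip]$, the regularizer obeys $\reg{\nObs}(\w^{*})=\nObs^{-2}\sum_i\rho(\nObs\w_i^{*})=O(1/\nObs)$. Since $\empTar=\hat{\source}_{\treat}\otimes\hat{\source}_{\cov}$ is a product of empirical marginals, $\empTar\xrightarrow{d}\txTarget$ a.s.\ and $\ipm(\empTar,\txTarget)\to 0$. Evaluating the objective at $\w^{*}$ and using optimality of $\wVect$ gives $\ipm^2(\wDistr,\empTar)\le\ipm^2(\hat{\source}^{\w^{*}}_{\treat,\cov},\empTar)+\regHyp\reg{\nObs}(\w^{*})\to 0$, whence the triangle inequality $\ipm(\wDistr,\txTarget)\le\ipm(\wDistr,\empTar)+\ipm(\empTar,\txTarget)\to 0$ and thus $\wDistr\xrightarrow{d}\txTarget$ a.s.

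For the outcome stage, fix $f\in C(\txSupp\times\ySupp)$ and write $f(\joinVar_i,\out_i)=\bar f(\joinVar_i)+e_i$, where $\bar f(\z)=\mathbb{E}_{\out|\treat,\cov}[f(\z,\out)]$ and $e_i=f(\joinVar_i,\out_i)-\bar f(\joinVar_i)$. Since $\empTar$, and hence $\wVect$, depend only on $(\treat_j,\cov_j)_j$, conditioning on that $\sigma$-algebra renders the weights deterministic and the $e_i$ conditionally independent, zero-mean and bounded; with $\sum_i\w_i^2\le\clip/\nObs$, a conditional Hoeffding bound yields $\mathbb{P}(|\sum_i\w_i e_i|>t\mid\cdot)\le 2\exp(-c\,\nObs t^2)$, so $\sum_i\w_i e_i\to 0$ a.s.\ by Borel--Cantelli. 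The remaining term $\sum_i\w_i\bar f(\joinVar_i)=\int\bar f\,d\wDistr$ should converge to $\int\bar f\,d\txTarget=\mathbb{E}_{\target}[f]$ by the first-stage marginal convergence, provided $\bar f$ is sufficiently regular.

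The main obstacle I anticipate is exactly this last term. Weak convergence $\wDistr\xrightarrow{d}\txTarget$ only guarantees $\int\bar f\,d\wDistr\to\int\bar f\,d\txTarget$ when $\bar f$ is $\txTarget$-a.e.\ continuous, whereas $\bar f$ is a priori merely bounded measurable. The clean sufficient condition is weak continuity of the conditional kernel $(\treat,\cov)\mapsto\source_{\out|\treat,\cov}$, so that $\bar f\in C(\txSupp)$ for every $f\in C(\txSupp\times\ySupp)$; reducing to products $f(\z,y)=a(\z)b(y)$ via Stone--Weierstrass localizes the requirement to continuity of $\z\mapsto\mathbb{E}_{\out|\z}[b(\out)]$ but does not remove it. I would therefore either impose this mild regularity, consistent with the compact-support setting, or establish $\txTarget$-a.e.\ continuity and close the argument through the portmanteau theorem. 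The secondary points requiring care are verifying that $\wass$ and $\mmd_\kernel$ genuinely metrize weak convergence on $\txSupp$ and that the oracle weights stay feasible once the renormalization slack afforded by $\clip>\souTarDenBound$ is accounted for.
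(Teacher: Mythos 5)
Your first stage (oracle weights $\souTarDensity(\joinVar_i)\big/\sum_j\souTarDensity(\joinVar_j)$, feasibility from $\clip>\souTarDenBound$, evaluating the objective at the oracle, optimality of $\wVect$, triangle inequality through $\empTar$) is exactly the paper's argument, as is your conditional Hoeffding/Borel--Cantelli treatment of the noise term $\sum_i\w_i e_i$, which the paper does via McDiarmid conditionally on $(\joinVar_i)_i$. The gap is where you anticipated it, and it is genuine: the theorem imposes \emph{no} regularity on the conditional law $\source_{\out|\treat,\cov}$, so $\bar f(\z)=\mathbb{E}[f(\z,\out)|\joinVar=\z]$ is only bounded measurable. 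Neither of your proposed exits proves the stated result: adding weak continuity of the conditional kernel proves a strictly weaker theorem, and "establishing $\txTarget$-a.e.\ continuity" is impossible in general -- a bounded measurable function need not be a.e.\ continuous, and nothing in the hypotheses forces it to be.

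The missing idea is Lusin's theorem, and it closes the argument using precisely the two quantitative bounds you already have. Fix $g\in C(\txSupp\times\ySupp)$ with $\|g\|_\infty\le B/2$ and let $\bar g(\z)=\mathbb{E}[g(\z,\out)|\joinVar=\z]$. By Lusin (valid since $\joinDistr$ is a Borel measure on a compact metric space) there is $g^*\in C(\txSupp)$ with $\|g^*\|_\infty\le B/2$ and $\joinDistr(g^*\neq\bar g)\le \epsilon/(2B\clip)$. Then
\[
\Bigl|\sum_i \w_i g(\joinVar_i,\out_i)-\mathbb{E}_{\target}[g]\Bigr|
\le \Bigl|\sum_i \w_i\bigl(g(\joinVar_i,\out_i)-\bar g(\joinVar_i)\bigr)\Bigr|
+\Bigl|\sum_i \w_i\bigl(g^*(\joinVar_i)-\bar g(\joinVar_i)\bigr)\Bigr|
+\Bigl|\sum_i \w_i g^*(\joinVar_i)-\mathbb{E}_{\txTarget}[g^*]\Bigr|
+\bigl|\mathbb{E}_{\txTarget}[g^*-\bar g]\bigr|,
\]
using $\mathbb{E}_{\target}[g]=\mathbb{E}_{\txTarget}[\bar g]$ (the conditional outcome law is common to $\source$ and $\target$). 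The first term is your conditional Hoeffding term; the third tends to zero a.s.\ by your stage-one convergence because $g^*$ \emph{is} continuous. The second is at most $B\sum_i\w_i\mathbbm{1}_{g^*(\joinVar_i)\neq\bar g(\joinVar_i)}\le (B\clip)\,\nObs^{-1}\sum_i\mathbbm{1}_{g^*(\joinVar_i)\neq\bar g(\joinVar_i)}$, which by the SLLN is eventually of order $\epsilon$: the clipping $\w_i\le\clip/\nObs$ is what converts "small $\joinDistr$-probability of disagreement" into "small weighted mass of disagreement". The fourth is at most $B\,\txTarget(g^*\neq\bar g)\le B\,\souTarDenBound\,\joinDistr(g^*\neq\bar g)\le\epsilon/2$, where the bounded-density assumption plays the same role on the population side. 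No continuity of $\bar g$ is ever needed; the approximation is in measure, not pointwise, and the hypotheses $\w_i\le\clip/\nObs$ and $\souTarDensity\le\souTarDenBound$ are exactly what make that sufficient.
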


\section{IMPLEMENTATION DETAILS} \label{sec:implementation}

We now discuss implementation details for the \cbdm~weighting algorithm from Definition \ref{def:cbdmIPM}. We will focus on quadratic regularization, that is $\reg{\nObs}(\wVect) = \| \wVect \|^2_2$.

\paragraph{When $\ipm_\ipmFctClass = \mmd_\kernel$.} For these choices of integral probability metric and regularizer the optimization program from Definition \ref{def:cbdmIPM} is actually a quadratic program. This comes from the following expression \cite{gretton2007kernel,gretton2012kernel}:
\begin{align*}
   &\mmd_{\kernel}^2(\wDistr, \empTar) = \sum_{i,j = 1}^{\nObs} \w_i \w_j \kernel(\z_i, \z_j)  \\ &- 2 \sum_{i=1}^\nObs \w_i \mathbb{E}_{\joinVar \sim \empTar}[\kernel(\z_i,\joinVar)] + \mathbb{E}_{\joinVar,\joinVar' \sim \empTar^2}[\kernel(\joinVar,\joinVar')].
\end{align*}
See also equation \eqref{eq:MMDextension} in the Appendix. If, in the regression step (Step 2), $\regClass$ is taken to be the unit ball of a RKHS with kernel $\kernel_g$, then in view of condition (a) from Theorem \ref{thm:doubleConsistency}, it is natural to choose the following kernel:
\begin{equation} \label{eq:choiceKernel}
    \kernel(z, z') = 4 \kernel_{\responseFct}(z, z') \kernel_g(t,t') + \kernel_g(t,t')^2,
\end{equation}
where $z \doteq (t,x)$, and $\kernel_{\responseFct}$ is a kernel chosen such that its RKHS potentially approximates well $\responseFct$. Typically, if no particular assumption beside continuity can be made about $\responseFct$, then one should take $\kernel_{\responseFct}$ to be a universal kernel.
\paragraph{When $\ipm_\ipmFctClass = \wass$.} In fact, also for the Wasserstein distance $\wass$ the optimization program from Definition \ref{def:cbdmIPM} is a quadratic program (see Section \ref{sec:appendixAlgo} in the Appendix). However, when there is no clipping and $\regHyp = 0$, the weights are simply obtain as follows \cite{cuturi2014fast}:
\begin{equation*}
\forall i, \quad \w_i = \sum_{\substack{\z \in \text{Supp}(\empTar): \\ i = \argmin_{i}\{ d( \joinVar_i , \z )\}}} \empTar(\{ \z \}),
\end{equation*}
where we assume the $\argmin$ under the sum is unique; if not, one can split the mass uniformly over all the possible minimizers. We use this version in our experiments.

\paragraph{How to Tune $\kernel$.} Following \cite{kallus2016generalized,kallus2019kernel} we can derive approaches for hyperparameter tuning through a Bayesian interpretation of the bound in Theorem \ref{thm:causalBound}. That is, let's put a prior distribution on $\responseFct$, more precisely $\responseFct \sim \mathcal{GP}(m,\kernel_{\responseFct})$ is a Gaussian Process with mean function $m$ and kernel $\kernel_{\responseFct}$. For a fixed realization of $\responseFct$ the bound in Theorem \ref{thm:causalBound} holds for $\ipm_\ipmFctClass = \mmd_\kernel$ with $\kernel = 4 \responseFct^2 \kernel_g + \kernel_g^2$, and taking the expectation w.r.t.~$\responseFct$:
$$
\mathbb{E}_{\responseFct}[\causalRisk(\regFctMin)] \leq  \mathbb{E}_{\responseFct}[\inf_{\regFct \in \regClass} \causalRisk(\regFct)]+ \mmd_{\kernel}(\wDistr,\txTarget), 
$$
where we omitted the remaining terms for simplicity and where $\kernel =4 (m^2 + \kernel_\responseFct)\kernel_g + \kernel_g^2$. When $m=0$ we fall back to \eqref{eq:choiceKernel}. We consider Automatic Relevance Detection (ARD), that is we fit the following parametrized version of $\kernel_\responseFct$: $\xi \kernel_\responseFct(z / \boldsymbol{\theta}, z' / \boldsymbol{\theta})$, where $\xi$ and $\boldsymbol{\theta}$ are estimated by maximizing the marginal likelihood (see \cite{williams2006gaussian} for details). Moreover, we follow the Bayesian interpretation even further than \cite{kallus2016generalized,kallus2019kernel} as we also use the posterior distribution of $\responseFct$: we take $30$\% of the data on which we get a posterior distribution $\mathcal{GP}(m',\kernel'_{\responseFct})$ for $\responseFct$ (again see \cite{williams2006gaussian}), and we use $(m', \kernel'_\responseFct)$ instead of $(m, \kernel_\responseFct)$ to derive weights on the remaining data.

\paragraph{How to Select $\regHyp$ and $\clip$.} The hyperparameters $\regHyp$ and $\clip$ control for the variance that could be induced by weights that are too high. The bias on the other hand is controlled by the value of $\ipm_{\ipmFctClass}(\wDistr, \empTar)$, taken at the \cbdm~weights $\wVect$ (Def.~\ref{def:cbdmIPM}). We suggest to try different values for $\regHyp$ and $\clip$ and choose such parameters that achieve a good trade-off between a low $\ipm_\ipmFctClass$ value and a high Empirical Sample Size \cite{kong1994sequential}: $\text{ESS} \doteq 1 / \| \wVect \|_2^2$.

\section{EXPERIMENTS}

We compare our \cbdm~weighting methods, whose implementations are described in the previous section, with other weighting approaches for continuous treatments that are directly available from the literature. Namely, we compare with the algorithms CBGPS and npCBGPS from \cite{fong2018covariate} and with the GBM algorithm from \cite{zhu2015boosting}, which is an inverse probability weighting approach where the propensity score is fitted with gradient boosted models. We also plot results for when the regression step is done directly on the \emph{unweighted} data. On the other hand, the versions of \cbdm~that we will test are: \emph{\cbdm-Wass} which corresponds to the case where $\ipm_\ipmFctClass = \wass$ in Section \ref{sec:implementation} ; \emph{\cbdm-Poly4}, \emph{\cbdm-Gauss} and \emph{\cbdm-Exp} which correspond to $\ipm_\ipmFctClass = \mmd_\kernel$, where we chose the kernel $\kernel_\responseFct$ to be respectively a polynomial kernel of degree 4, the Gaussian kernel and the exponential kernel.

\begin{figure*}[h]

\includegraphics[width=\textwidth, height = 9cm]{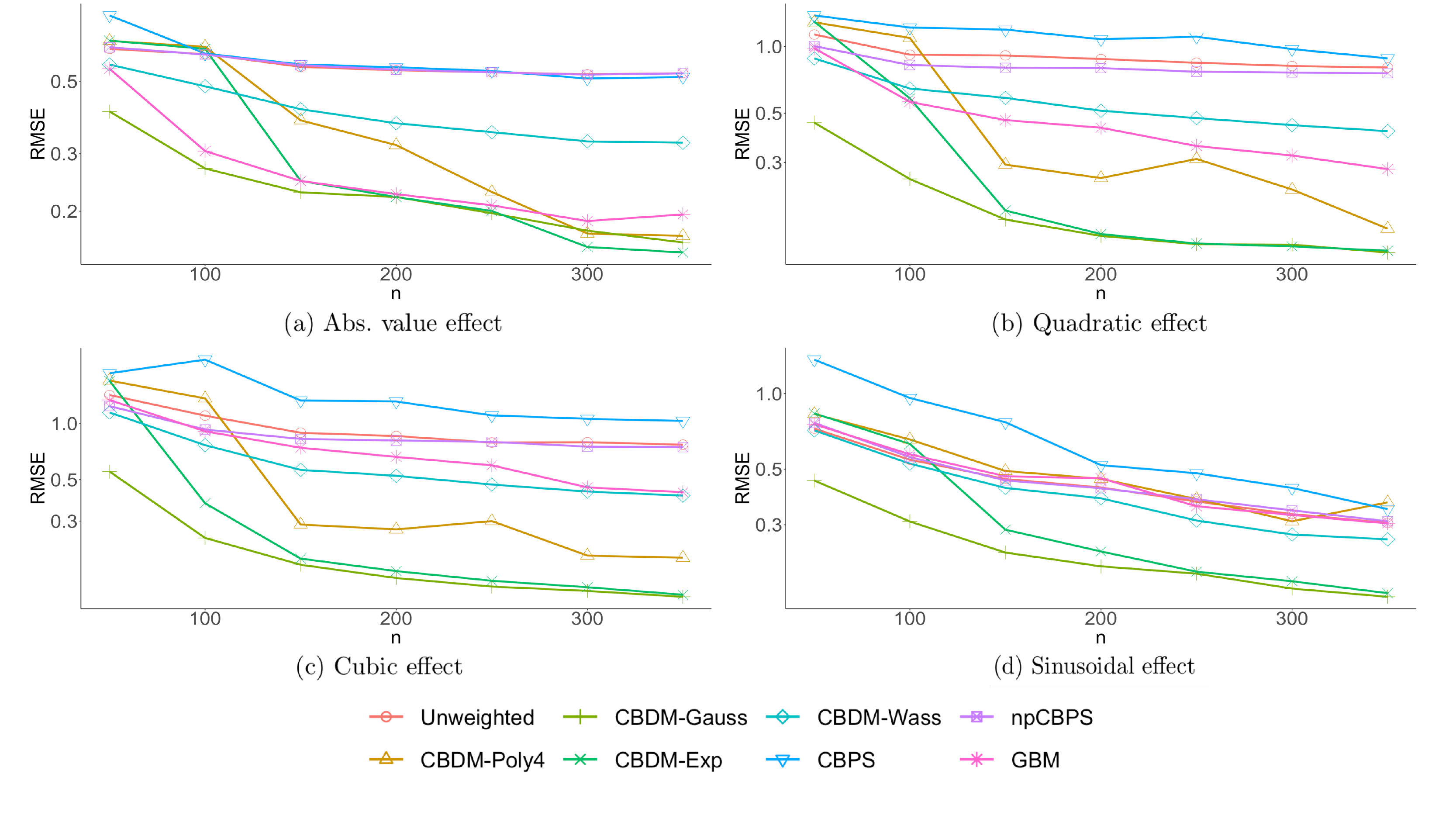}

\caption{RMSE over 100 replications of the estimated $\beta$ (in log-scale) as a function of the data size $\nObs$.} \label{fig:simulations}
\end{figure*}

\paragraph{Data Generating Process.} Our simulation setting mimics the one from \cite{kallus2017framework}, except we adapted it to a continuous treatment. More precisely, we let $ \cov = (\cov_1, \cov_2,\cov_3,\cov_4)$ four covariates uniformly distributed on $[-1,1]$ generated from a Gaussian copula with correlation $0.2$. The treatment $\treat$ conditioned on $\cov$ is distributed as: $T|X \sim \text{Beta}(5 m(X), 5(1-m(X)))$, where $m(X) = 0.8 / (1 + \sqrt{2} \| (X_1,X_2,X_3) \|_2)$. Also, we have:
$$
\out(\tr) = \beta \tr + f(\cov) + \epsilon, \quad \epsilon \sim \mathcal{N}(0,0.1),
$$
with $\beta = 1$ and $f$ which is composed of either: i) \emph{absolute values}, $f(x) = |x_1| + |x_2|$ ; ii) \emph{quadratic} polynomials, $f(x) = (x_1 + x_2) + (x_1 + x_2)^2$  ; iii) \emph{cubic} polynomials, $f(x) = (x_1 + x_2)^2 + (x_1 + x_2)^3$; or iv) \emph{sinusoidal} functions, $f(x) = \sin{(\pi(x_1 + x_2))} + \cos{(\pi(x_1 - x_2))}$. As the goal is to estimate $\beta$, we obviously choose $\kernel_g$ to be a polynomial kernel of degree 1. We set $W = 5$, $\lambda = 0$ and, for creating $\empTar$, we reshuffle the data several times until reaching around  $10^5$ samples in $\empTar$.

\paragraph{Results.} The results are displayed in Figure \ref{fig:simulations}. We see that even in this simple setting methods like CBGPS or npCBGPS seem to fail to improve upon \emph{unweighted} in most scenarios. Section \ref{sec:npCBPS} and Theorem \ref{thm:causalBound} provide some insights on why this is the case: By focusing only on the covariance between $\treat$ and $\cov$ the CBGPS methods aim mainly at controlling for linear functions $f$, however in the present settings $f$ is far from being linear. On the contrary, the \cbdm~methods with universal kernels, like the Gaussian or the exponential kernels, clearly appear to outperform other methods (or do at least as well) across all scenarios, and furthermore they converge faster as they almost reach their minimal errors often around $150$ observations. This suggests indeed that our advice from Section \ref{sec:implementation} of taking a universal kernel for $\kernel_\responseFct$ seems to be pertinent.

\section{CONCLUSION} We established in this paper another connection between transfer learning and causal inference as we showed that the balancing weight approach to treatment effect estimation can be restated as a discrepancy minimization problem from domain adaptation. Based on such an interpretation, we introduced a framework (\cbdm) that provably encompasses most of the weighting methods that directly optimize the balance between treatment and covariates, and allows to extend formally such approaches to treatments of arbitrary types. Furthermore, we derived theoretical guarantees for \cbdm~that are the generalizations to non-binary settings of known results for binary treatments. Such guarantees also offer more insight on how to choose hyperparameters (such as kernels) in order to achieve good performance in estimating a dose response-curve with observational data, as demonstrated in our simulations. We hope the connections made in this paper will open new perspectives, in particular we leave as an open question for future research the use of discrepancy notions other than \ipm s, but also more generally the use of other methods from domain adaptation for the purpose of dose-response curve estimation from observational data.

\newpage 

%\AtNextBibliography{\footnotesize}
%\printbibliographyss
%\bibliographystyle{plain}
%\bibliography{refs.bib}

\bibliographystyle{plain}
{\footnotesize \bibliography{bibliography.bib}}

\begin{thebibliography}{10}

\bibitem{arbour2019permutation}
David Arbour and Drew Dimmery.
\newblock Permutation weighting.
\newblock {\em arXiv preprint arXiv:1901.01230}, 2019.

\bibitem{aronszajn1950theory}
Nachman Aronszajn.
\newblock Theory of reproducing kernels.
\newblock {\em Transactions of the American mathematical society},
  68(3):337--404, 1950.

\bibitem{athey2018approximate}
Susan Athey, Guido~W Imbens, and Stefan Wager.
\newblock Approximate residual balancing: debiased inference of average
  treatment effects in high dimensions.
\newblock {\em Journal of the Royal Statistical Society: Series B (Statistical
  Methodology)}, 80(4):597--623, 2018.

\bibitem{bartlett2002rademacher}
Peter~L Bartlett and Shahar Mendelson.
\newblock Rademacher and gaussian complexities: Risk bounds and structural
  results.
\newblock {\em Journal of Machine Learning Research}, 3(Nov):463--482, 2002.

\bibitem{ben2007analysis}
Shai Ben-David, John Blitzer, Koby Crammer, and Fernando Pereira.
\newblock Analysis of representations for domain adaptation.
\newblock In {\em Advances in neural information processing systems}, pages
  137--144, 2007.

\bibitem{bickel2007discriminative}
Steffen Bickel, Michael Br{\"u}ckner, and Tobias Scheffer.
\newblock Discriminative learning for differing training and test
  distributions.
\newblock In {\em Proceedings of the 24th international conference on Machine
  learning}, pages 81--88, 2007.

\bibitem{chan2016globally}
Kwun Chuen~Gary Chan, Sheung Chi~Phillip Yam, and Zheng Zhang.
\newblock Globally efficient non-parametric inference of average treatment
  effects by empirical balancing calibration weighting.
\newblock {\em Journal of the Royal Statistical Society: Series B (Statistical
  Methodology)}, 78(3):673--700, 2016.

\bibitem{ccinlar2011probability}
Erhan {\c{C}}{\i}nlar.
\newblock {\em Probability and stochastics}, volume 261.
\newblock Springer Science \& Business Media, 2011.

\bibitem{cortes2011domain}
Corinna Cortes and Mehryar Mohri.
\newblock Domain adaptation in regression.
\newblock In {\em International Conference on Algorithmic Learning Theory},
  pages 308--323. Springer, 2011.

\bibitem{cortes2014domain}
Corinna Cortes and Mehryar Mohri.
\newblock Domain adaptation and sample bias correction theory and algorithm for
  regression.
\newblock {\em Theoretical Computer Science}, 519:103--126, 2014.

\bibitem{cortes2019adaptation}
Corinna Cortes, Mehryar Mohri, and Andr{\'e}s~Munoz Medina.
\newblock Adaptation based on generalized discrepancy.
\newblock {\em The Journal of Machine Learning Research}, 20(1):1--30, 2019.

\bibitem{cuturi2014fast}
Marco Cuturi and Arnaud Doucet.
\newblock Fast computation of wasserstein barycenters.
\newblock In {\em International Conference on Machine Learning}, pages
  685--693, 2014.

\bibitem{dudley2018real}
Richard~M Dudley.
\newblock {\em Real analysis and probability}.
\newblock Chapman and Hall/CRC, 2018.

\bibitem{fan2016improving}
Jianqing Fan, Kosuke Imai, Han Liu, Yang Ning, and Xiaolin Yang.
\newblock Improving covariate balancing propensity score: A doubly robust and
  efficient approach.
\newblock Technical report, Technical report, Princeton Univ, 2016.

\bibitem{fong2018covariate}
Christian Fong, Chad Hazlett, Kosuke Imai, et~al.
\newblock Covariate balancing propensity score for a continuous treatment:
  application to the efficacy of political advertisements.
\newblock {\em The Annals of Applied Statistics}, 12(1):156--177, 2018.

\bibitem{gretton2007kernel}
Arthur Gretton, Karsten Borgwardt, Malte Rasch, Bernhard Sch{\"o}lkopf, and
  Alex~J Smola.
\newblock A kernel method for the two-sample-problem.
\newblock In {\em Advances in neural information processing systems}, pages
  513--520, 2007.

\bibitem{gretton2012kernel}
Arthur Gretton, Karsten~M Borgwardt, Malte~J Rasch, Bernhard Sch{\"o}lkopf, and
  Alexander Smola.
\newblock A kernel two-sample test.
\newblock {\em Journal of Machine Learning Research}, 13(Mar):723--773, 2012.

\bibitem{hainmueller2012entropy}
Jens Hainmueller.
\newblock Entropy balancing for causal effects: A multivariate reweighting
  method to produce balanced samples in observational studies.
\newblock {\em Political Analysis}, 20(1):25--46, 2012.

\bibitem{hazlett2018kernel}
Chad Hazlett.
\newblock Kernel balancing: A flexible non-parametric weighting procedure for
  estimating causal effects.
\newblock {\em Available at SSRN 2746753}, 2018.

\bibitem{hill2011bayesian}
Jennifer~L Hill.
\newblock Bayesian nonparametric modeling for causal inference.
\newblock {\em Journal of Computational and Graphical Statistics},
  20(1):217--240, 2011.

\bibitem{hirano2004propensity}
Keisuke Hirano and Guido~W Imbens.
\newblock The propensity score with continuous treatments.
\newblock {\em Applied Bayesian modeling and causal inference from
  incomplete-data perspectives}, 226164:73--84, 2004.

\bibitem{hirano2003efficient}
Keisuke Hirano, Guido~W Imbens, and Geert Ridder.
\newblock Efficient estimation of average treatment effects using the estimated
  propensity score.
\newblock {\em Econometrica}, 71(4):1161--1189, 2003.

\bibitem{hirshberg2019minimax}
David~A Hirshberg, Arian Maleki, and Jose Zubizarreta.
\newblock Minimax linear estimation of the retargeted mean.
\newblock {\em arXiv preprint arXiv:1901.10296}, 2019.

\bibitem{hofmann2008kernel}
Thomas Hofmann, Bernhard Sch{\"o}lkopf, and Alexander~J Smola.
\newblock Kernel methods in machine learning.
\newblock {\em The annals of statistics}, pages 1171--1220, 2008.

\bibitem{huang2007correcting}
Jiayuan Huang, Arthur Gretton, Karsten Borgwardt, Bernhard Sch{\"o}lkopf, and
  Alex~J Smola.
\newblock Correcting sample selection bias by unlabeled data.
\newblock In {\em Advances in neural information processing systems}, pages
  601--608, 2007.

\bibitem{imai2014covariate}
Kosuke Imai and Marc Ratkovic.
\newblock Covariate balancing propensity score.
\newblock {\em Journal of the Royal Statistical Society: Series B (Statistical
  Methodology)}, 76(1):243--263, 2014.

\bibitem{imai2004causal}
Kosuke Imai and David~A Van~Dyk.
\newblock Causal inference with general treatment regimes: Generalizing the
  propensity score.
\newblock {\em Journal of the American Statistical Association},
  99(467):854--866, 2004.

\bibitem{imbens2000role}
Guido~W Imbens.
\newblock The role of the propensity score in estimating dose-response
  functions.
\newblock {\em Biometrika}, 87(3):706--710, 2000.

\bibitem{imbens2004nonparametric}
Guido~W Imbens.
\newblock Nonparametric estimation of average treatment effects under
  exogeneity: A review.
\newblock {\em Review of Economics and statistics}, 86(1):4--29, 2004.

\bibitem{kallus2016generalized}
Nathan Kallus.
\newblock Generalized optimal matching methods for causal inference.
\newblock {\em arXiv preprint arXiv:1612.08321}, 2016.

\bibitem{kallus2017framework}
Nathan Kallus.
\newblock A framework for optimal matching for causal inference.
\newblock In {\em Artificial Intelligence and Statistics}, pages 372--381,
  2017.

\bibitem{kallus2019kernel}
Nathan Kallus and Michele Santacatterina.
\newblock Kernel optimal orthogonality weighting: A balancing approach to
  estimating effects of continuous treatments.
\newblock {\em arXiv preprint arXiv:1910.11972}, 2019.

\bibitem{kang2007demystifying}
Joseph~DY Kang, Joseph~L Schafer, et~al.
\newblock Demystifying double robustness: A comparison of alternative
  strategies for estimating a population mean from incomplete data.
\newblock {\em Statistical science}, 22(4):523--539, 2007.

\bibitem{kennedy2017non}
Edward~H Kennedy, Zongming Ma, Matthew~D McHugh, and Dylan~S Small.
\newblock Non-parametric methods for doubly robust estimation of continuous
  treatment effects.
\newblock {\em Journal of the Royal Statistical Society: Series B (Statistical
  Methodology)}, 79(4):1229--1245, 2017.

\bibitem{kong1994sequential}
Augustine Kong, Jun~S Liu, and Wing~Hung Wong.
\newblock Sequential imputations and bayesian missing data problems.
\newblock {\em Journal of the American statistical association},
  89(425):278--288, 1994.

\bibitem{mansour2009domain}
Yishay Mansour, Mehryar Mohri, and Afshin Rostamizadeh.
\newblock Domain adaptation: Learning bounds and algorithms.
\newblock {\em arXiv preprint arXiv:0902.3430}, 2009.

\bibitem{mccaffrey2004propensity}
Daniel~F McCaffrey, Greg Ridgeway, and Andrew~R Morral.
\newblock Propensity score estimation with boosted regression for evaluating
  causal effects in observational studies.
\newblock {\em Psychological methods}, 9(4):403, 2004.

\bibitem{mohri2012new}
Mehryar Mohri and Andres~Munoz Medina.
\newblock New analysis and algorithm for learning with drifting distributions.
\newblock In {\em International Conference on Algorithmic Learning Theory},
  pages 124--138. Springer, 2012.

\bibitem{muller1997integral}
Alfred M{\"u}ller.
\newblock Integral probability metrics and their generating classes of
  functions.
\newblock {\em Advances in Applied Probability}, 29(2):429--443, 1997.

\bibitem{neyman1923applications}
Jersey Neyman.
\newblock Sur les applications de la th{\'e}orie des probabilit{\'e}s aux
  experiences agricoles: Essai des principes.
\newblock {\em Roczniki Nauk Rolniczych}, 10:1--51, 1923.

\bibitem{pearl2009causality}
Judea Pearl.
\newblock {\em Causality: Models, Reasoning and Inference}.
\newblock Cambridge University Press, 2009.

\bibitem{peters2017elements}
Jonas Peters, Dominik Janzing, and Bernhard Sch{\"o}lkopf.
\newblock {\em Elements of causal inference: foundations and learning
  algorithms}.
\newblock MIT press, 2017.

\bibitem{peyre2019computational}
Gabriel Peyr{\'e}, Marco Cuturi, et~al.
\newblock Computational optimal transport.
\newblock {\em Foundations and Trends{\textregistered} in Machine Learning},
  11(5-6):355--607, 2019.

\bibitem{pinelis1994optimum}
Iosif Pinelis et~al.
\newblock Optimum bounds for the distributions of martingales in banach spaces.
\newblock {\em The Annals of Probability}, 22(4):1679--1706, 1994.

\bibitem{ramdas2017wasserstein}
Aaditya Ramdas, Nicol{\'a}s Trillos, and Marco Cuturi.
\newblock On wasserstein two-sample testing and related families of
  nonparametric tests.
\newblock {\em Entropy}, 19(2), 2017.

\bibitem{robins2000marginal}
James~M Robins, Miguel~Angel Hernan, and Babette Brumback.
\newblock Marginal structural models and causal inference in epidemiology,
  2000.

\bibitem{robins1994estimation}
James~M Robins, Andrea Rotnitzky, and Lue~Ping Zhao.
\newblock Estimation of regression coefficients when some regressors are not
  always observed.
\newblock {\em Journal of the American statistical Association},
  89(427):846--866, 1994.

\bibitem{rosenbaum1987model}
Paul~R Rosenbaum.
\newblock Model-based direct adjustment.
\newblock {\em Journal of the American Statistical Association},
  82(398):387--394, 1987.

\bibitem{rosenbaum1989optimal}
Paul~R Rosenbaum.
\newblock Optimal matching for observational studies.
\newblock {\em Journal of the American Statistical Association},
  84(408):1024--1032, 1989.

\bibitem{rosenbaum1983central}
Paul~R Rosenbaum and Donald~B Rubin.
\newblock The central role of the propensity score in observational studies for
  causal effects.
\newblock {\em Biometrika}, 70(1):41--55, 1983.

\bibitem{rubin1974estimating}
Donald~B Rubin.
\newblock Estimating causal effects of treatments in randomized and
  nonrandomized studies.
\newblock {\em Journal of educational Psychology}, 66(5):688, 1974.

\bibitem{rubin1977assignment}
Donald~B Rubin.
\newblock Assignment to treatment group on the basis of a covariate.
\newblock {\em Journal of educational Statistics}, 2(1):1--26, 1977.

\bibitem{rubin2005causal}
Donald~B Rubin.
\newblock Causal inference using potential outcomes: Design, modeling,
  decisions.
\newblock {\em Journal of the American Statistical Association},
  100(469):322--331, 2005.

\bibitem{shalev2014understanding}
Shai Shalev-Shwartz and Shai Ben-David.
\newblock {\em Understanding machine learning: From theory to algorithms}.
\newblock Cambridge university press, 2014.

\bibitem{steinwart2001influence}
Ingo Steinwart.
\newblock On the influence of the kernel on the consistency of support vector
  machines.
\newblock {\em Journal of machine learning research}, 2(Nov):67--93, 2001.

\bibitem{stuart2010matching}
Elizabeth~A Stuart.
\newblock Matching methods for causal inference: A review and a look forward.
\newblock {\em Statistical science: a review journal of the Institute of
  Mathematical Statistics}, 25(1):1, 2010.

\bibitem{sugiyama2008direct}
Masashi Sugiyama, Shinichi Nakajima, Hisashi Kashima, Paul~V Buenau, and
  Motoaki Kawanabe.
\newblock Direct importance estimation with model selection and its application
  to covariate shift adaptation.
\newblock In {\em Advances in neural information processing systems}, pages
  1433--1440, 2008.

\bibitem{villani2003topics}
C{\'e}dric Villani.
\newblock {\em Topics in optimal transportation}.
\newblock American Mathematical Soc., 2003.

\bibitem{wang2017minimal}
Yixin Wang and Jos{\'e}~R Zubizarreta.
\newblock Minimal approximately balancing weights: asymptotic properties and
  practical considerations.
\newblock {\em arXiv preprint arXiv:1705.00998}, 2017.

\bibitem{weed2019sharp}
Jonathan Weed, Francis Bach, et~al.
\newblock Sharp asymptotic and finite-sample rates of convergence of empirical
  measures in wasserstein distance.
\newblock {\em Bernoulli}, 25(4A):2620--2648, 2019.

\bibitem{williams2006gaussian}
Christopher~KI Williams and Carl~Edward Rasmussen.
\newblock {\em Gaussian processes for machine learning}.
\newblock MIT press Cambridge, MA, 2006.

\bibitem{wong2017kernel}
Raymond~KW Wong and Kwun Chuen~Gary Chan.
\newblock Kernel-based covariate functional balancing for observational
  studies.
\newblock {\em Biometrika}, 105(1), 2017.

\bibitem{wu2018matching}
Xiao Wu, Fabrizia Mealli, Marianthi-Anna Kioumourtzoglou, Francesca Dominici,
  and Danielle Braun.
\newblock Matching on generalized propensity scores with continuous exposures.
\newblock {\em arXiv preprint arXiv:1812.06575}, 2018.

\bibitem{yiu2018covariate}
Sean Yiu and Li~Su.
\newblock Covariate association eliminating weights: a unified weighting
  framework for causal effect estimation.
\newblock {\em Biometrika}, 105(3):709--722, 2018.

\bibitem{zhang2012generalization}
Chao Zhang, Lei Zhang, and Jieping Ye.
\newblock Generalization bounds for domain adaptation.
\newblock In {\em Advances in neural information processing systems}, pages
  3320--3328, 2012.

\bibitem{zhao2019covariate}
Qingyuan Zhao.
\newblock Covariate balancing propensity score by tailored loss functions.
\newblock {\em The Annals of Statistics}, 47(2):965--993, 2019.

\bibitem{zhao2017entropy}
Qingyuan Zhao and Daniel Percival.
\newblock Entropy balancing is doubly robust.
\newblock {\em Journal of Causal Inference}, 5(1), 2017.

\bibitem{zhu2015boosting}
Yeying Zhu, Donna~L Coffman, and Debashis Ghosh.
\newblock A boosting algorithm for estimating generalized propensity scores
  with continuous treatments.
\newblock {\em Journal of causal inference}, 3(1):25--40, 2015.

\bibitem{zubizarreta2015stable}
Jos{\'e}~R Zubizarreta.
\newblock Stable weights that balance covariates for estimation with incomplete
  outcome data.
\newblock {\em Journal of the American Statistical Association},
  110(511):910--922, 2015.

\end{thebibliography}

\newpage 

\onecolumn

\renewcommand\thesection{\Alph{section}}

\setcounter{section}{0}

\begin{center}
    \textbf{\LARGE Appendix}
\end{center}

\section{Proofs of the Propositions from Section \ref{sec:special_cases}}\label{sec:appRem}

We present in this part of the appendix the proofs of Propositions \ref{prop1} and \ref{prop2} from Section \ref{sec:special_cases}.

\subsubsection*{Proof of Proposition \ref{prop1}} 

For a weight vector $\wVect$ on the full sample $(\treat_i,\cov_i)_{i = 1}^\nObs$, we divide it into two parts, that is $\wVect = (\wVect^0,\wVect^1)$ where $\wVect^0$ are the $\nObs_0 > 0$ first weights, and $\wVect^1$ are the last $\nObs_1 = \nObs - \nObs_0 > 0$ weights. Recall that we assumed that w.l.o.g.~the $\nObs_0$ first observations are the control group and the last $\nObs_1$ are the treatment group. First let's start the proof by noticing that (by definition of the empirical distribution):
$$
\forall k \in \{1, \ldots, K\}, \forall t \in \{ 0,1 \}, \quad \mathbb{E}_{\empTar}[f^t_k(\treat,\cov)] = \frac{n_t}{n} \bar{f}_k \quad \text{and} \quad \mathbb{E}_{\wDistr}[f^t_k(\treat,\cov)] = \sum_{i = 1}^{n_t} w_i^t f_k(\cov_{i + t . \nObs_0}).
$$
Now assume that $\wVect$ is actually the \cbdm~weights described in Proposition \ref{prop1} for some $\lambda > 0$. Let $\delta$ such that $\ipm_{\ipmFctClass}(\wDistr, \empTar) = \delta$. Notice because $\wVect \in \Lambda_{0,1}$, we have that $\nObs \wVect^t / \nObs_\tr \in \Lambda_{n_t}$ for $t \in \{ 0,1\}$. Also we have:
$$
\forall k \in \{1, \ldots, K\}, \forall t \in \{ 0,1 \}, \quad  |\nObs_t^{-1} \sum_{i=1}^{\nObs_t} \nObs \w^t_i f_k(X_{i + \tr. \nObs_{0}}) - \Bar{f}_k | \leq n \delta / n_t.
$$
Hence $\wVect^t$ is feasible for the problems \eqref{eq:approxExactBal} for $\delta_t = n \delta/ n_t$. It is also the one with minimum $L^2$ norm, because of the regularization term $\lambda \|\wVect\|_2^2$ for the \cbdm~program in Proposition \ref{prop1}. This implies the result of the proposition, namely $\wVect^t(\lambda) = \Tilde{\wVect}^t( \delta_\tr)$. 
\subsubsection*{Proof of Proposition \ref{prop2}} 
For the same reasons than in the proof of Proposition \ref{prop1}, we have here:
$$
\forall f \in \ipmFctClass_0, \forall t \in \{ 0,1 \}, \quad \mathbb{E}_{\empTar}[f^t(\treat,\cov)] = \frac{n_t}{n} n_1^{-1} \sum_{i = n_0 + 1}^{n} f(\cov_i) \quad \text{and} \quad \mathbb{E}_{\wDistr}[f^t(\treat,\cov)] = \sum_{i = 1}^{n_t} w_i^t f(\cov_{i + t . \nObs_0}).
$$
Therefore, we can rewrite the optimization problem the \cbdm~weights solve in this context as follows:
\begin{equation} \label{eq:prop2_rewrite}
    \wVect(\regHyp) =  \argmin_{\substack{\wVect = (\wVect^0,\wVect^1): \\ \nObs \wVect^0 / \nObs_0 \in \Lambda_{\nObs_0}, \,\, \nObs \wVect^1 / \nObs_1 \in \Lambda_{\nObs_1}}}  \regHyp \| \wVect^0 \|_2^2  + \regHyp \| \wVect^1 \|_2^2 + \max \left( \Delta_0(\wVect), \Delta_1(\wVect) \right),
\end{equation}
where $\Delta_\tr(\wVect) = \sup_{\fct \in \ipmFctClass_0}\left(\mathbb{E}_{\empTar}[f^t(\treat,\cov)] - \mathbb{E}_{\wDistr}[f^t(\treat,\cov)] \right)^2$. Notice also that actually $\Delta_\tr(\wVect)$ depends only on $\wVect^\tr$, that is: $\Delta_0(\wVect) = \Delta_0(\wVect^0)$ and $\Delta_1(\wVect) = \Delta_1(\wVect^1)$. First we can rewrite $\Delta_1(\wVect^1)$ as follows:
$$
\Delta_1(\wVect^1) = \sup_{\fct \in \ipmFctClass_0}\left(\frac{1}{n} \sum_{i = 1}^{n_1} f(\cov_{i+ \nObs_0}) - \sum_{i = 1}^{n_1} w_i^1 f(\cov_{i +  \nObs_0}) \right)^2,
$$
hence it is easy to see that the solution $\wVect^1(\regHyp)$ for \eqref{eq:prop2_rewrite} is $\wVect^1(\regHyp) = \nObs^{-1} \mathbf{1}$. So essentially the problem form \eqref{eq:prop2_rewrite} depends only on $\wVect^0$ and $\Delta_0$. Notice finally that by making the change of variable $ \Tilde{\wVect} = n \wVect^0 / \nObs_{0}$:
\begin{align*}
    \Delta_0(\wVect^0) + \regHyp \| \wVect^0 \|_2^2 & = \sup_{\fct \in \ipmFctClass_0}\left(\frac{n_0}{n} n_1^{-1} \sum_{i = 1}^{n_1} f(\cov_{i+ \nObs_0}) - \sum_{i = 1}^{n_0} w_i^0 f(\cov_{i}) \right)^2 + \regHyp \| \wVect^0 \|_2^2 \\
    & = (\nObs_{0} / n)^2 . \left( \ipm^2_{\ipmFctClass_{0}}(\hat{\source}_{\cov|\treat = 0}^{\Tilde{\wVect}},\hat{\source}_{\cov|\treat = 1}) + \regHyp \| \Tilde{\wVect} \|_2^2 \right).
\end{align*}

This actually concludes our proof as we can see now that $\wVect^0(\regHyp) = \frac{n_0}{n} \Tilde{\wVect}^0(\regHyp)$.

\section{Proof Of Theorem \ref{thm:causalBound}}

As a first step we give a bound on $\causalRisk(\regFctMin) - \causalRisk(\regFct)$, for any $\regFct \in \regClass$ and $\regFctMin \in \regClass$ that depends on the observational data $(\treat_i, \cov_i, \out_i)_{i = 1}^{\nObs}$. {\bf We use all along the same notations and assumptions as described in Theorem \ref{thm:causalBound}}.
\begin{lemma} \label{lem:firstStepBound} Consider the observational data $(\treat_i, \cov_i, \out_i)_{i = 1}^{\nObs}$ and any weight vector $\wVect \in \simplex$, we have:
$$
\forall \regFct \in \regClass, \quad \causalRisk(\regFctMin) - \causalRisk(\regFct) \leq \epsilon + \ipmCst \ipm_{\ipmFctClass}(\wDistr, \txTarget) + 2 \sup_{\regFct' \in \regClass} \sum_{i = 1}^{\nObs} \w_i (\out_i - \responseFct(\treat_i, \cov_i)) (\regFct'(\treat_i) - \regFct(\treat_i)).
$$
\end{lemma}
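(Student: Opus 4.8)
The plan is to reduce the causal-risk gap to a gap between $\txTarget$-expectations of the single auxiliary function $h_\regFct(\tr,\x)\doteq -2\responseFct(\tr,\x)\regFct(\tr)+\regFct(\tr)^2$, which is exactly the object the theorem posits to be $(\gamma/2,\epsilon/2)$-approximable, and then to control that gap by routing through the weighted empirical measure $\wDistr$.

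First I would carry out the reduction. Since $\source=\joinDistr\otimes\yCondDistr$ and $\target=\tDistr\otimes\xDistr\otimes\yCondDistr$ share the same conditional $\yCondDistr$, the response function $\responseFct$ is also the conditional mean of $\out$ given $(\treat,\cov)$ under $\target$, and by \eqref{eq:epoEquality}, $\epo(\tr)=\mathbb{E}_{\xDistr}[\responseFct(\tr,\cov)]$. Conditioning on $\treat$ and using $\treat\independent\cov$ under $\target$, a bias–variance expansion gives $\mathbb{E}_{\txTarget}[(\responseFct-\regFct)^2]=\causalRisk(\regFct)+\mathbb{E}_{\tDistr}[Var_{\xDistr}(\responseFct(\treat,\cov))]$, in which the variance term is free of $\regFct$. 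Writing $(\responseFct-\regFct)^2=\responseFct^2+h_\regFct$ and noting $\mathbb{E}_{\txTarget}[\responseFct^2]$ is likewise $\regFct$-free, all constants cancel in the difference and I obtain the key identity $\causalRisk(\regFctMin)-\causalRisk(\regFct)=\mathbb{E}_{\txTarget}[h_{\regFctMin}]-\mathbb{E}_{\txTarget}[h_\regFct]$.

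Next I would split this as $A+B$ with $A=(\mathbb{E}_{\txTarget}-\mathbb{E}_{\wDistr})[h_{\regFctMin}-h_\regFct]$ and $B=\mathbb{E}_{\wDistr}[h_{\regFctMin}-h_\regFct]$. For $A$: as a difference of two $(\gamma/2,\epsilon/2)$-approximable functions, $h_{\regFctMin}-h_\regFct$ is $(\gamma,\epsilon)$-approximable, say $\sum_j\lambda_j\fct_j+r$ with $\|\boldsymbol\lambda\|_1\le\gamma$ and $\|r\|_\infty\le\epsilon$; since $\wVect\in\simplex$ makes both $\wDistr$ and $\txTarget$ probability measures, the linear part is bounded by $\|\boldsymbol\lambda\|_1\,\ipm_{\ipmFctClass}(\wDistr,\txTarget)\le\ipmCst\,\ipm_{\ipmFctClass}(\wDistr,\txTarget)$ and the uniform remainder by $\epsilon$, so $A\le\ipmCst\,\ipm_{\ipmFctClass}(\wDistr,\txTarget)+\epsilon$ (the halved constants in the hypothesis are what leave $\gamma$ and $\epsilon$ after these triangle inequalities). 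For $B$: expanding the weighted square loss, $\mathbb{E}_{\wDistr}[h_\regFct]=\sum_i\w_i(\out_i-\regFct(\treat_i))^2-\sum_i\w_i\out_i^2+2\sum_i\w_i(\out_i-\responseFct(\treat_i,\cov_i))\regFct(\treat_i)$, so in the difference the $\sum_i\w_i\out_i^2$ term cancels and $B=[\hat L(\regFctMin)-\hat L(\regFct)]+2\sum_i\w_i(\out_i-\responseFct(\treat_i,\cov_i))(\regFctMin(\treat_i)-\regFct(\treat_i))$, where $\hat L(\regFct)=\sum_i\w_i(\out_i-\regFct(\treat_i))^2$ is the weighted empirical risk from Step 2. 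Since $\regFctMin$ minimizes $\hat L$ over $\regClass$ and $\regFct\in\regClass$, the bracket is nonpositive, and bounding the remaining term by its supremum over $\regFct'\in\regClass$ (legitimate because $\regFctMin\in\regClass$) yields the last summand of the lemma; adding the bounds on $A$ and $B$ finishes the proof.

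I expect the main obstacle to be Step 1: correctly invoking ignorability so that $\responseFct$ plays the role of the $\target$-conditional mean, and verifying that every $\regFct$-free variance and constant really cancels, so that the causal-risk difference collapses to $\mathbb{E}_{\txTarget}[h_{\regFctMin}-h_\regFct]$. The rest is essentially bookkeeping — the approximability estimate for $A$ is the standard linearity-of-expectation argument, and the minimality of $\regFctMin$ makes $B$ immediate — with the only delicate point being the tracking of the factors of two in $\gamma$ and $\epsilon$, which the halved hypothesis is designed to absorb.
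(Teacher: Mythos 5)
Your proof is correct and follows essentially the same route as the paper's: the same bias--variance reduction of the causal-risk gap to $\mathbb{E}_{\txTarget}[h_{\regFctMin}-h_{\regFct}]$ with $h_{\regFct}=-2\responseFct\regFct+\regFct^2$, the same approximability/IPM transfer between $\txTarget$ and $\wDistr$ (with the halved constants absorbed identically), and the same use of the minimality of $\regFctMin$ followed by a supremum over $\regClass$. The only cosmetic difference is that you relate $\mathbb{E}_{\wDistr}[h_{\regFct}]$ to the weighted empirical risk by direct algebra, whereas the paper routes through a conditional-on-the-data bias--variance decomposition over the outcomes; the cancellations are the same.
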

\begin{proof}
We use a bias-variance decomposition to see that we can actually replace $\epo$ by $\responseFct$. But first recall that:
\begin{align*}
    \causalRisk(\regFctMin) - \causalRisk(\regFct) = & \mathbb{E}_{\target_{\treat}}[(\epo(\treat) - \regFctMin(\treat))^2] - \mathbb{E}_{\target_{\treat}}[(\epo(\treat) - \regFct(\treat))^2] \\
    = & \mathbb{E}_{\target_{\treat}}[( \mathbb{E}_{\txTarget}[\condExp(\treat,\cov) |\treat] - \regFctMin(\treat))^2] - \mathbb{E}_{\target_{\treat}}[(\mathbb{E}_{\txTarget}[\condExp(\treat,\cov) |\treat] - \regFct(\treat))^2],
\end{align*}
where the last equality is a consequence of equation \eqref{eq:epoEquality}, tower property, and the fact that $\target_{\out|\treat,\cov} = \source_{\out|\treat,\cov}$. For any $\regFct \in \regClass$ (and in particular for $\regFct = \regFctMin$ too), by bias-variance decomposition we have:
$$
\mathbb{E}_{\target_{\treat}}[(\mathbb{E}_{\txTarget}[\condExp(\treat,\cov) |\treat] - \regFct(\treat))^2] = -\mathbb{E}_{\txTarget}[(\mathbb{E}_{\txTarget}[\condExp(\treat,\cov) |\treat] - \condExp(\treat,\cov))^2] + \mathbb{E}_{\txTarget}[( \condExp(\treat,\cov) - \regFct(\treat))^2].
$$
The first term on the RHS is actually common to all $\regFct \in \regClass$ and therefore will cancel out. Meaning that we get:
\begin{align*}
    \causalRisk(\regFctMin) - \causalRisk(\regFct) = & \mathbb{E}_{\txTarget}[(\condExp(\treat,\cov) - \regFctMin(\treat))^2] - \mathbb{E}_{\txTarget}[(\condExp(\treat,\cov) - \regFct(\treat))^2] \\
    = & \mathbb{E}_{\txTarget}[-2 \condExp(\treat,\cov)\regFctMin(\treat) + \regFctMin(\treat)^2] - \mathbb{E}_{\txTarget}[(\condExp(\treat,\cov) - \regFct(\treat))^2].
\end{align*}
Now, recall that by assumption for a constant we have $\forall \regFct \in \regClass$, $ - 2\condExp \regFct + \regFct^2$ is $(\gamma / 2, \epsilon / 2)$-approximable by $\ipmFctClass$. Using Definitions \ref{def:IPM} and \ref{def:approxFct} (and by adding and subtracting $\sum_i \w_{i} \condExp^2(\treat_i,\cov_i)$) directly yields:
$$
\causalRisk(\regFctMin) - \causalRisk(\regFct) \leq \epsilon + \ipmCst \ipm_{\ipmFctClass} (\wDistr, \txTarget) + \sum_{i = 1}^{\nObs} \w_{i} (\condExp(\treat_i,\cov_i)- \regFctMin(\treat_i))^2 - \sum_{i = 1}^{\nObs} \w_{i} (\condExp(\treat_i,\cov_i)- \regFct(\treat_i))^2.
$$
Again another bias-variance decomposition applies, $\forall \regFct \in \regClass$ we have:
$$
\mathbb{E}_{\out|\treat, \cov}\left[ \sum_{i = 1}^{\nObs} \w_i (\out_i - \regFct(\treat_i))^2 \right] = \mathbb{E}_{\out|\treat, \cov}\left[ \sum_{i = 1}^{\nObs} \w_i (\out_i - \condExp(\treat_i,\cov_i))^2 \right] + \sum_{i = 1}^{\nObs} \w_i (\condExp(\treat_i,\cov_i) - \regFct(\treat_i))^2,
$$
where for brevity we used $\mathbb{E}_{\out|\treat, \cov}$ to signify that we take the expectation over all the $\out_i$'s conditioned on the data $(\treat_i,\cov_i)_{i = 1}^{\nObs}$. We also drop the $\target$ because this conditional distribution $\out|\treat,\cov$ is the same under both $\source$ and $\target$. As the first term in the RHS is identical for all $\regFct \in \regClass$, it cancels out and we get:
\begin{align*}
    \causalRisk(\regFctMin) - \causalRisk(\regFct) \leq & \,\epsilon + \ipmCst \ipm_{\ipmFctClass} (\wDistr, \txTarget) + \mathbb{E}_{\out|\treat, \cov}\left[ \sum_{i = 1}^{\nObs} \w_i (\out_i - \regFctMin(\treat_i))^2 \right] - \mathbb{E}_{\out|\treat, \cov} \left[ \sum_{i = 1}^{\nObs} \w_i (\out_i - \regFct(\treat_i))^2 \right] \\
    \leq & \, \epsilon + \ipmCst \ipm_{\ipmFctClass} (\wDistr, \txTarget) + \mathbb{E}_{\out|\treat, \cov}\left[ \sum_{i = 1}^{\nObs} \w_i (\out_i - \regFctMin(\treat_i))^2 \right] - \sum_{i = 1}^{\nObs} \w_i (\out_i - \regFctMin(\treat_i))^2  \\
    & + \sum_{i = 1}^{\nObs} \w_i (\out_i - \regFctMin(\treat_i))^2 - \mathbb{E}_{\out|\treat, \cov} \left[ \sum_{i = 1}^{\nObs} \w_i (\out_i - \regFct(\treat_i))^2 \right]\\
    \leq & \, \epsilon + \ipmCst \ipm_{\ipmFctClass} (\wDistr, \txTarget) + \left( \mathbb{E}_{\out|\treat, \cov}\left[ \sum_{i = 1}^{\nObs} \w_i (\out_i - \regFctMin(\treat_i))^2 \right] - \sum_{i = 1}^{\nObs} \w_i (\out_i - \regFctMin(\treat_i))^2 \right)  \\
    & - \left( \mathbb{E}_{\out|\treat, \cov} \left[ \sum_{i = 1}^{\nObs} \w_i (\out_i - \regFct(\treat_i))^2 \right] - \sum_{i = 1}^{\nObs} \w_i (\out_i - \regFct(\treat_i))^2  \right),
\end{align*}
where for the last inequality we used that $\forall \regFct \in  \regClass,  \sum_{i = 1}^{\nObs} \w_i (\out_i - \regFct(\treat_i))^2 \geq  \sum_{i = 1}^{\nObs} \w_i (\out_i - \regFctMin(\treat_i))^2$, by definition of $\regFctMin$ which minimizes the weighted empirical risk (Step 2). For any $\regFct \in \regClass$ let's look at:
\begin{align*}
    \mathbb{E}_{\out|\treat, \cov} \left[ \sum_{i = 1}^{\nObs} \w_i (\out_i - \regFct(\treat_i))^2 \right] - \sum_{i = 1}^{\nObs} \w_i (\out_i - \regFct(\treat_i))^2 = & \sum_{i = 1}^{\nObs} \w_i \left( \mathbb{E}[\out_i^2|\treat_i, \cov_i] - 2 \regFct(\treat_i) \condExp(\treat_i, \cov_i) + \regFct^2(\treat_i) \right) \\
    & - \sum_{i = 1}^{\nObs} \w_i \left( \out_i^2 - 2 \regFct(\treat_i) \out_i + \regFct^2(\treat_i) \right) \\
    = & 2 \sum_{i = 1}^{\nObs} \w_i (\out_i - \condExp(\treat_i, \cov_i)) . \regFct(\treat_i) - \sum_{i = 1}^{\nObs} \w_i (\out_i^2 - \mathbb{E}[\out_i^2|\treat_i, \cov_i]).
\end{align*}
The last term being identical for any $\regFct \in \regClass$, it cancels out: Plugging the RHS of the last equality in the previous inequality yields:
$$
\forall \regFct \in \regClass, \quad \causalRisk(\regFctMin) - \causalRisk(\regFct) \leq \epsilon + \ipmCst \ipm_{\ipmFctClass}(\wDistr, \txTarget) + 2 \sum_{i = 1}^{\nObs} \w_i (\out_i - \condExp(\treat_i, \cov_i)) (\regFctMin(\treat_i) - \regFct(\treat_i)).
$$
As $\regFctMin \in \regClass$, we can bound this last inequality replacing $\regFctMin$ by a supremum over $\regClass$. This concludes our proof.
\end{proof}
The reminder of this proof aims at bounding the term $\sup_{\regFct' \in \regClass} \sum_{i = 1}^{\nObs} \w_i (\out_i - \condExp(\treat_i, \cov_i)) (\regFct'(\treat_i) - \regFct(\treat_i))$ from the previous lemma. First we bound its expectation (still conditioned on $(\treat_i,\cov_i)_{i = 1}^{\nObs}$) and then apply a concentration inequality. Recall that the Rademacher complexity $\radComp$ was introduced in Definition \ref{def:rademacher}.
\begin{lemma}
Use, for short, $\mathbb{E}_{\out|\treat, \cov}$ to signify a conditional expectation w.r.t.~$(\out_i)_{i = 1}^\nObs$ given $(\treat_i, \cov_i)_{i = 1}^\nObs$. We have:
$$
\forall \regFct \in \regClass, \quad \mathbb{E}_{\out|\treat, \cov} \left[\sup_{\regFct' \in \regClass} \sum_{i = 1}^{\nObs} \w_i (\out_i - \condExp(\treat_i, \cov_i)) (\regFct'(\treat_i) - \regFct(\treat_i)) \right] \leq \clip \yConstRange \radComp /2.
$$
\end{lemma}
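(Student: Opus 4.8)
The plan is to run a standard symmetrization-and-contraction argument, exploiting that the residuals $U_i \doteq \out_i - \responseFct(\treat_i,\cov_i)$ are, conditionally on $(\treat_i,\cov_i)_{i=1}^{\nObs}$, independent and centered (since $\responseFct(\treat_i,\cov_i) = \mathbb{E}[\out_i\mid\treat_i,\cov_i]$ by definition of the response function) with $|U_i| \le \yConstRange/2$ a.s., while the weights satisfy $\w_i \le \clip/\nObs$ and depend only on $(\treat_i,\cov_i)$, so they are constants under $\mathbb{E}_{\out|\treat,\cov}$. Throughout I keep the increments $g^{\regFct'}_i \doteq \regFct'(\treat_i) - \regFct(\treat_i)$ as the ``function'' over which the supremum ranges, writing the quantity of interest as $\sum_i \w_i U_i g^{\regFct'}_i$.

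First I would symmetrize. Introducing an independent ghost copy $(\out_i')_i$ of the outcomes and setting $U_i' \doteq \out_i' - \responseFct(\treat_i,\cov_i)$, the centering $\mathbb{E}_{U'}[U_i'] = 0$ lets me rewrite $\sum_i \w_i U_i g^{\regFct'}_i = \sum_i \w_i (U_i - \mathbb{E}_{U'}[U_i'])\, g^{\regFct'}_i$; moving the supremum inside the ghost expectation by Jensen and inserting Rademacher signs (valid since $U_i - U_i'$ is symmetric and independent across $i$) gives the usual factor-two bound
\[
\mathbb{E}_{\out|\treat,\cov}\Big[\sup_{\regFct'\in\regClass}\sum_i \w_i U_i g^{\regFct'}_i\Big] \le 2\,\mathbb{E}_{U,\radVarVect}\Big[\sup_{\regFct'\in\regClass}\sum_i \radVar_i\,\w_i U_i\, g^{\regFct'}_i\Big].
\]
Next, conditioning on the outcomes and using that $\radVar_i U_i \stackrel{d}{=} \radVar_i |U_i|$ (independence of the sign $\radVar_i$ from $U_i$), I would apply the Ledoux--Talagrand contraction principle coordinatewise: each map $t \mapsto \w_i|U_i|\,t$ is Lipschitz with constant $\w_i|U_i| \le \clip\yConstRange/(2\nObs)$, so factoring out the uniform bound leaves $1$-Lipschitz maps vanishing at $0$ and yields
\[
\mathbb{E}_{\radVarVect}\Big[\sup_{\regFct'}\sum_i \radVar_i \w_i|U_i| g^{\regFct'}_i\Big] \le \frac{\clip\yConstRange}{2\nObs}\,\mathbb{E}_{\radVarVect}\Big[\sup_{\regFct'}\sum_i \radVar_i g^{\regFct'}_i\Big].
\]

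Finally I would drop the fixed $\regFct$ inside $g^{\regFct'}_i$: since $\mathbb{E}_{\radVarVect}[\radVar_i] = 0$, the term $\sum_i \radVar_i \regFct(\treat_i)$ contributes nothing in expectation, so the right-hand side equals $\frac{\clip\yConstRange}{2\nObs}\,\mathbb{E}_{\radVarVect}[\sup_{\regFct'}\sum_i \radVar_i \regFct'(\treat_i)] = \frac{\clip\yConstRange}{2\nObs}\cdot\frac{\nObs}{2}\,\radComp = \frac{\clip\yConstRange}{4}\radComp$. Combining this with the factor two from symmetrization produces exactly the claimed bound $\clip\yConstRange\radComp/2$.

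The main obstacle is the bookkeeping in the contraction step: the effective Lipschitz constants $\w_i|U_i|$ vary with $i$, so I must factor the single uniform bound $\clip\yConstRange/(2\nObs)$ out front (legitimate because the contraction principle only requires each rescaled map to be $1$-Lipschitz and to vanish at the origin) and verify that the constants collapse to precisely $\clip\yConstRange/2$. It is worth noting that the $\boundedGees$-boundedness of $\regClass$ is \emph{not} needed for this lemma; it will instead enter the subsequent concentration (bounded-differences) step in the proof of Theorem \ref{thm:causalBound}.
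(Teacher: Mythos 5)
Your proof is correct and follows essentially the same route as the paper's: ghost-sample symmetrization via Jensen, insertion of Rademacher signs, and the contraction lemma, with the constants collapsing to $\clip\yConstRange\radComp/2$. The only differences are bookkeeping — the paper cancels the fixed $\regFct$ term \emph{first} (using that the residuals are conditionally centered) and keeps the two-sided difference $\out_i-\out_i'$ with $|\out_i-\out_i'|\le \yConstRange$ (so no factor $2$ from symmetrization), whereas you pay the factor $2$ in symmetrization, use the tighter residual bound $|\out_i - \responseFct(\treat_i,\cov_i)|\le \yConstRange/2$ in the contraction step, and cancel the fixed $\regFct$ at the end via $\mathbb{E}[\radVar_i]=0$; the two factors offset exactly and both arguments yield the same constant.
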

\begin{proof}
First notice that because we first fixed $\regFct \in \regClass$, we have:
$$
\mathbb{E}_{\out|\treat, \cov} \left[\sup_{\regFct' \in \regClass} \sum_{i = 1}^{\nObs} \w_i (\out_i - \condExp(\treat_i, \cov_i)) (\regFct'(\treat_i) - \regFct(\treat_i)) \right] = \mathbb{E}_{\out|\treat, \cov} \left[\sup_{\regFct' \in \regClass} \sum_{i = 1}^{\nObs} \w_i (\out_i - \condExp(\treat_i, \cov_i)) \regFct'(\treat_i) \right].
$$
We can proceed by symmetrization: For every $i$, we introduce another variable $\out'_i$ which, conditionally on $\treat_i, \cov_i$, is independent of $\out_i$ but identically distributed. In particular, $\mathbb{E}[\out'_i|\treat_i, \cov_i] = \condExp(\treat_i, \cov_i)$, and we have:
$$
\mathbb{E}_{\out|\treat, \cov} \left[\sup_{\regFct' \in \regClass} \sum_{i = 1}^{\nObs} \w_i (\out_i - \condExp(\treat_i, \cov_i)) \regFct'(\treat_i) \right] \leq \mathbb{E}_{\out, \out'|\treat, \cov} \left[\sup_{\regFct' \in \regClass} \sum_{i = 1}^{\nObs} \w_i (\out_i - \out'_i) \regFct'(\treat_i) \right].
$$
Note that for any Rademacher variable $\radVar_i$ independent of the rest, because of the symmetry between $\out_i$ and $\out'_i$, we have:
$$
\radVar_i (\out_i - \out'_i) \sim (\out_i - \out'_i).
$$
Hence,
\begin{align*}
    \mathbb{E}_{\out, \out'|\treat, \cov} \left[\sup_{\regFct' \in \regClass} \sum_{i = 1}^{\nObs} \w_i (\out_i - \out'_i) \regFct'(\treat_i) \right] = & \mathbb{E}_{\out, \out'|\treat, \cov} \left[ \mathbb{E}_{\radVarVect} \left[ \sup_{\regFct' \in \regClass} \sum_{i = 1}^{\nObs} \w_i \radVar_i (\out_i - \out'_i) \regFct'(\treat_i) \right]\right] \\
    \leq & \clip \yConstRange \radComp /2,
\end{align*}
where the last inequality comes from the Contraction Lemma \cite[Lemma 26.9]{shalev2014understanding}, noticing that $\w_i \leq \clip / \nObs$ and $|\out_i - \out'_i| \leq \yConstRange$.
\end{proof}
To derive the concentration inequality, we will make use of the following theorem, taken from \cite{pinelis1994optimum}, which is a generalization of Bernstein's inequality for martingales.
\begin{theorem} [Theorem 8.7 from \cite{pinelis1994optimum}] \label{thm:pinelis}
Let $(X_j)_{j \in \mathbb{N}}$ a martingale taking values in $\mathbb{R}$, where $X_0 = 0$ and, for the same filtration $\mathcal{F} = (\mathcal{F}_{j})_{j = 1}^{\nObs}$, let $(u_j)_{j\in \mathbb{N}}$ any $\mathcal{F}$-adapted process such that $X_j - X_{j-1} = u_j - \mathbb{E}[u_j| \mathcal{F}_{j-1}]$ having the following properties:
\begin{itemize}
    \item $\exists a > 0$ s.t.~$\forall j, |u_j| \leq a$ a.s.
    \item $\exists b > 0$ s.t.~$ \sum_{j\geq 1} \mathbb{E}[u_j^2|\mathcal{F}_{j-1}] \leq b^2$ a.s.
\end{itemize}
Then, if we call $h(x) = (1+x)\log(1+x) - x$ we have:
$$
\forall \epsilon > 0, \quad \mathbb{P}(\sup_j X_j \geq \epsilon) \leq \exp\left(- \frac{b^2}{a^2} h\left(\frac{\epsilon a}{b^2}\right)\right).
$$
\end{theorem}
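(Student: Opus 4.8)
The plan is to prove this Bennett-type martingale tail bound by the classical exponential-supermartingale (Chernoff) method, recovering the rate function $h$ at the very end by optimizing over a free parameter $\theta>0$.

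The first and most delicate step is a one-step conditional exponential bound. Fix $\theta>0$. The elementary function $x\mapsto (e^{\theta x}-1-\theta x)/x^2$ is nondecreasing on $\mathbb{R}$ (with value $\theta^2/2$ at $x=0$), so the one-sided constraint $u_j\le a$ (a consequence of $|u_j|\le a$) yields the pointwise inequality $e^{\theta u_j}\le 1+\theta u_j + c\,u_j^2$, where $c=(e^{\theta a}-1-\theta a)/a^2\ge 0$. Writing $m_j=\mathbb{E}[u_j\mid\mathcal{F}_{j-1}]$ and $v_j=\mathbb{E}[u_j^2\mid\mathcal{F}_{j-1}]$, and recalling $X_j-X_{j-1}=u_j-m_j$, I take conditional expectations, multiply by $e^{-\theta m_j}$, and apply $1+y\le e^y$ to obtain
\[
\mathbb{E}\big[e^{\theta(X_j-X_{j-1})}\,\big|\,\mathcal{F}_{j-1}\big]\le e^{-\theta m_j}\big(1+\theta m_j + c\,v_j\big)\le e^{c\,v_j}.
\]

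Next I convert this into a supermartingale. Setting $M_j=\exp\!\big(\theta X_j - c\sum_{k=1}^{j} v_k\big)$ with $M_0=1$ (since $X_0=0$), and using that $\sum_{k=1}^{j}v_k$ is $\mathcal{F}_{j-1}$-measurable, the display above gives $\mathbb{E}[M_j\mid\mathcal{F}_{j-1}]\le M_{j-1}$, so $(M_j)$ is a nonnegative supermartingale. Because $c\ge 0$ and $\sum_{k\ge 1}v_k\le b^2$ almost surely, on the event $\{X_j\ge\epsilon\}$ one has $M_j\ge \exp(\theta\epsilon - c\,b^2)$, whence $\{\sup_j X_j\ge\epsilon\}\subseteq\{\sup_j M_j\ge e^{\theta\epsilon-c b^2}\}$. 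Ville's maximal inequality for nonnegative supermartingales, $\mathbb{P}(\sup_j M_j\ge\lambda)\le \mathbb{E}[M_0]/\lambda = 1/\lambda$, then gives $\mathbb{P}(\sup_j X_j\ge\epsilon)\le \exp(-\theta\epsilon + c\,b^2)$ for every $\theta>0$. Finally I optimize: minimizing $\theta\mapsto -\theta\epsilon+\tfrac{b^2}{a^2}(e^{\theta a}-1-\theta a)$ gives the stationarity condition $e^{\theta a}=1+\epsilon a/b^2$, i.e. $\theta^\star=a^{-1}\log(1+\epsilon a/b^2)$; substituting and writing $x=\epsilon a/b^2$ collapses the exponent to $-\tfrac{b^2}{a^2}\big((1+x)\log(1+x)-x\big)=-\tfrac{b^2}{a^2}h(x)$, which is exactly the claimed inequality.

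The main obstacle is the one-step bound, precisely because $u_j$ is not centered: one cannot directly control the exponential moment of the increment $u_j-m_j$ via a two-sided bound on that increment. The resolution is to apply the elementary quadratic majorant to $u_j$ itself (needing only $u_j\le a$), retain the linear term $\theta m_j$, and let the factor $e^{-\theta m_j}$ cancel it before invoking $1+y\le e^y$; this keeps the variance proxy as the conditional second moment $v_j$, which is exactly what the hypothesis $\sum_j v_j\le b^2$ controls. A secondary subtlety is that the supremum ranges over a possibly unbounded index set, so a fixed-time Markov bound would not suffice and the supermartingale maximal (Ville) inequality is genuinely required.
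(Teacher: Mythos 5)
Your proof is correct, but note that the paper itself offers no proof of this statement: it is imported verbatim as Theorem~8.7 of Pinelis (1994) and used as a black box in the concentration lemma feeding Theorem~3, so the comparison here is really between your argument and Pinelis's original one. Your route is the classical real-valued Freedman/Bennett argument, and every step checks out: the monotonicity of $x \mapsto (e^{\theta x}-1-\theta x)/x^2$ for $\theta>0$ indeed needs only the one-sided bound $u_j \le a$; the cancellation $e^{-\theta m_j}\bigl(1+\theta m_j + c\,v_j\bigr) \le e^{c\,v_j}$ via $1+y\le e^y$ is the right way to handle the non-centered $u_j$; the compensator $\sum_{k\le j} v_k$ is $\mathcal{F}_{j-1}$-measurable (each $v_k$ is a conditional expectation given $\mathcal{F}_{k-1}$), so $M_j=\exp\bigl(\theta X_j - c\sum_{k\le j}v_k\bigr)$ is genuinely a nonnegative supermartingale; Ville's maximal inequality correctly replaces a fixed-time Markov bound over the unbounded index set; and the optimization at $\theta^\star = a^{-1}\log(1+\epsilon a/b^2)$ collapses the exponent to $-\frac{b^2}{a^2}h\bigl(\frac{\epsilon a}{b^2}\bigr)$ exactly as you compute. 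What the two approaches buy is different in scope: Pinelis proves the result for martingales taking values in $(2,D)$-smooth Banach spaces, which requires a considerably heavier argument, whereas your scalar derivation is elementary and fully sufficient for the real-valued form stated and used in this paper. One cosmetic remark: the inclusion $\{\sup_j X_j \ge \epsilon\} \subseteq \{\sup_j M_j \ge e^{\theta\epsilon - c b^2}\}$ also covers the case where the supremum is only approached and never attained, since $X_j \ge \epsilon - \delta$ for every $\delta>0$ forces $\sup_j M_j \ge e^{\theta(\epsilon-\delta)-cb^2}$ and one lets $\delta \downarrow 0$; it is worth a sentence, but it does not affect correctness.
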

\begin{lemma}
For given (that is fixed) $(\treat_i, \cov_i)_{i=1}^{\nObs}$ we have for all $\nObs>0$ and $\delta \in (0,1]$, with probability at least $1-\delta$ over $(\out_i)_{i=1}^\nObs$:
\begin{align*}
    \sup_{\regFct' \in \regClass} \sum_{i = 1}^{\nObs} \w_i (\out_i - \condExp(\treat_i, \cov_i)) (\regFct'(\treat_i) - \regFct(\treat_i))  \leq & \mathbb{E}_{\out|\treat, \cov} \left[\sup_{\regFct' \in \regClass} \sum_{i = 1}^{\nObs} \w_i (\out_i - \condExp(\treat_i, \cov_i)) (\regFct'(\treat_i) - \regFct(\treat_i)) \right] \\
    & + \frac{4}{3} \frac{\clip \yConstRange \boundedGees \log(1/\delta)}{\nObs}+ 2\sigma \boundedGees\| \wVect \|_2 \sqrt{2 \log(1/\delta)}.
\end{align*}
\end{lemma}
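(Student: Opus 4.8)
The plan is to apply the Bernstein-type martingale inequality of Theorem~\ref{thm:pinelis} to the centered supremum
\[
\Phi \doteq \sup_{\regFct' \in \regClass} \sum_{i=1}^{\nObs} \w_i\,(\out_i - \condExp(\treat_i,\cov_i))\,(\regFct'(\treat_i) - \regFct(\treat_i)),
\]
viewed as a function of the outcomes with $(\treat_i,\cov_i)_{i=1}^{\nObs}$ and $\regFct$ held fixed; here $\mathbb{P}$ and $\mathbb{E}_{\out|\treat,\cov}$ denote conditional probability and expectation given $(\treat_i,\cov_i)_{i=1}^{\nObs}$. Since the summands are conditionally independent across $i$ with conditional mean zero, I would reveal the outcomes one at a time: set $\mathcal{F}_j = \sigma(\out_1,\dots,\out_j)$ and $X_j = \mathbb{E}_{\out|\treat,\cov}[\Phi \mid \mathcal{F}_j] - \mathbb{E}_{\out|\treat,\cov}[\Phi]$, so that $(X_j)$ is a martingale with $X_0 = 0$ and $X_{\nObs} = \Phi - \mathbb{E}_{\out|\treat,\cov}[\Phi]$. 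Taking $u_j = X_j - X_{j-1}$ makes the representation $X_j - X_{j-1} = u_j - \mathbb{E}[u_j \mid \mathcal{F}_{j-1}]$ required by Theorem~\ref{thm:pinelis} hold automatically, and it remains to exhibit the constants $a$ and $b$.

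To control the increments I would introduce, for each $j$, an independent copy $\out_j'$ of $\out_j$ (given $\treat_j,\cov_j$) and write $u_j = \mathbb{E}[\Phi - \Phi^{(j)} \mid \mathcal{F}_j]$, where $\Phi^{(j)}$ is $\Phi$ with $\out_j$ replaced by $\out_j'$; the identity holds because averaging out $\out_j$ given the past is the same as averaging out its fresh copy given $\mathcal{F}_j$. Since $\Phi$ is a supremum and a single coordinate change perturbs every candidate $\regFct'$ only through its $j$-th summand, $|\Phi - \Phi^{(j)}| \leq \w_j\,|\out_j - \out_j'|\,\boundedGees$. With $\w_j \leq \clip/\nObs$, the $\boundedGees$-boundedness of $\regClass$, and $|\out_j - \condExp(\treat_j,\cov_j)| \leq \yConstRange/2$ (so $|\out_j - \out_j'| \leq \yConstRange$), this gives a uniform a.s. bound $|u_j| \leq a$ of order $\clip\yConstRange\boundedGees/\nObs$. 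For the conditional second moments, Jensen's inequality and the same perturbation bound yield $\mathbb{E}[u_j^2 \mid \mathcal{F}_{j-1}] \leq \w_j^2\,\boundedGees^2\,\mathbb{E}[(\out_j - \out_j')^2 \mid \mathcal{F}_{j-1}] \leq 2\,\w_j^2\,\boundedGees^2\,\yVar$, using $Var_{\out|\treat,\cov}(\out_j) \leq \yVar$; summing over $j$ produces $\sum_j \mathbb{E}[u_j^2 \mid \mathcal{F}_{j-1}] \leq b^2$ with $b$ of order $\sigma\boundedGees\|\wVect\|_2$.

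Finally I would feed these $a$ and $b$ into Theorem~\ref{thm:pinelis} and convert its tail bound into additive form via the elementary inequality $h(x) \geq x^2/(2 + 2x/3)$, which turns the conclusion into the Bernstein estimate $\mathbb{P}(\Phi - \mathbb{E}_{\out|\treat,\cov}[\Phi] \geq \epsilon) \leq \exp(-\epsilon^2/(2b^2 + \tfrac{2}{3}a\epsilon))$. Setting the right-hand side to $\delta$, solving the resulting quadratic in $\epsilon$, and using $\sqrt{x+y} \leq \sqrt{x} + \sqrt{y}$ splits the deviation into a fast term $\propto a\log(1/\delta) \sim \clip\yConstRange\boundedGees\log(1/\delta)/\nObs$ and a slow term $\propto b\sqrt{\log(1/\delta)} \sim \sigma\boundedGees\|\wVect\|_2\sqrt{\log(1/\delta)}$, which are exactly the two explicit terms in the statement. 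The main obstacle is the increment control: because the supremum over $\regClass$ couples all outcomes through a data-dependent maximizer, the Doob increments are not a simple sum of independent terms, and the crux is the independent-copy (symmetrization) argument reducing each increment to a single $j$-th summand; this is what simultaneously delivers the $\clip\yConstRange\boundedGees/\nObs$ a.s. bound and, via the conditional variances, the $\|\wVect\|_2$ scaling of $b$. Tracking the precise numerical constants through the inversion of $h$ is then routine.
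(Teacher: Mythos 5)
Your proposal is correct and follows essentially the same route as the paper: a Doob martingale over the outcomes, Pinelis' Bernstein-type inequality (Theorem~\ref{thm:pinelis}) with the bound $h(x) \geq x^2/(2+2x/3)$, and inversion of the resulting tail to obtain the two deviation terms. Your independent-copy argument cleanly supplies the increment bounds that the paper dismisses as ``tedious but straightforward manipulations,'' and in fact yields slightly tighter constants ($a = \clip\yConstRange\boundedGees/\nObs$, $b^2 = 2\yVar\boundedGees^2\|\wVect\|_2^2$ versus the paper's $2\clip\yConstRange\boundedGees/\nObs$ and $4\yVar\boundedGees^2\|\wVect\|_2^2$), which still implies the stated inequality.
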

\begin{proof}
We want to use the previous theorem to derive this concentration bound. Consider the following Doob's martingale:
$$
M_j = \mathbb{E}\left[ \sup_{\regFct' \in \regClass} \sum_{i = 1}^{\nObs} \w_i (\out_i - \condExp(\treat_i, \cov_i)) (\regFct'(\treat_i) - \regFct(\treat_i)) \Big| (\treat_i,\cov_i)_{i=1}^{\nObs}, (\out_i)_{i=1}^{j} \right], \quad \forall j \in \{0, \ldots, \nObs \}.
$$
By convention we will set $M_j = M_\nObs$ for $j > \nObs$, and note that we can subtract $M_j$ by $M_0$ so that it would be equal zero for $j = 0$ (as required in Theorem \ref{thm:pinelis}). Note also that $M_\nObs$ is the supremum on the LHS of this lemma's inequality and $M_0$ is the expectation on the RHS. Finally, let $u_j = M_j - M_{j-1}$ and notice that $\mathbb{E}[u_j|\mathcal{F}_{j-1}] = 0$, where we call $\mathcal{F}_{j} = \sigma((\treat_i,\cov_i)_{i=1}^{\nObs}, (\out_i)_{i=1}^{j})$. Moreover, through some tedious but straightforward manipulations we can easily see that:
$$
|u_j| \leq 2 \w_j |\out_j- \condExp(\treat_j,\cov_j)|\boundedGees \leq 2 \clip \yConstRange \boundedGees / \nObs = a,
$$
and also:
$$
\sum_{j=1}^{\nObs}\mathbb[u_j^2|\mathcal{F}_{j-1}] \leq \sum_{j=1}^{\nObs} 4 \w_j^2 \yVar \boundedGees^2 = b^2.
$$
Using Theorem \ref{thm:pinelis} we get that for any $\epsilon$:
$$
\mathbb{P}(M_\nObs - M_0 \geq \epsilon) \leq \exp \left(- \frac{b^2}{a^2} h\left(\frac{\epsilon a}{b^2}\right) \right) \leq \exp \left(-\frac{\epsilon^2 / 2}{b^2 + \epsilon a /3} \right),
$$
where the last inequality comes from the fact that $h(x) \geq x^2/(2+2x/3)$ (see lemmas B.8 and B.9 from \cite{shalev2014understanding}). Then equating this upper bound to $\delta$ and solving for $\epsilon$ (the steps follows the usual ones used for the classical Bernstein's inequality), we get:
$$
\epsilon \leq 2 \log(1/\delta) a /3 + \sqrt{2 \log(1/\delta) b^2}.
$$
Plugging in the values of $a$ and $b^2$ yields the result of this lemma.
\end{proof}
\subsubsection*{Proof of Theorem \ref{thm:causalBound}} 
Combining the three previous lemmas, we get that with probability at least $1-\delta$ over $(\out_i)_{i=1}^{\nObs}$, given $(\treat_i,\cov_i)_{i=1}^\nObs$ (being fixed):
$$
\forall \regFct \in \regClass, \quad \causalRisk(\regFctMin) - \causalRisk(\regFct) \leq \epsilon + \ipmCst \ipm_{\ipmFctClass}(\wDistr, \txTarget) + \clip \yConstRange \radComp + \frac{8}{3} \frac{\clip \yConstRange \boundedGees \log(1/\delta)}{\nObs}+ 4\sigma \boundedGees \| \wVect \|_2 \sqrt{2 \log(1/\delta)}.
$$
This concludes the proof of Theorem \ref{thm:causalBound}.

\section{Proof Of Theorem \ref{thm:dualProblem}} \label{app:dualPb}

We consider here the \cbdm~weights $\wVect(\nObs)$ from Definition \ref{def:cbdmIPM} built on data $(\joinVar_{i})_{i = 1}^{\nObs}$ with $\ipm_\ipmFctClass = \mmd_{\kernel}$, where $\kernel$ is a continuous kernel that gives rise to a RKHS $(\rkhs, \inner{\cdot}{\cdot})$. That is, here $\ipmFctClass$ is the unit ball of $\rkhs$: $\ipmFctClass = \mathcal{B}_{\rkhs}(0,1)$. The proof of Theorem \ref{thm:dualProblem} starts by noticing that $\nObs \w_i$ is actually the density $\partial \wDistr / \partial \hat{\source}^\nObs_{\joinVar}$ taken at $\joinVar_i$, where $\hat{\source}^\nObs_{\joinVar} = \sum_{i=1}^\nObs \dirac{\joinVar_i} /\nObs$. This allows us to rewrite the optimization program of Definition \ref{def:cbdmIPM} as follows:
\begin{equation} \label{eq:rewrite}
    \forall i, \w_i(\nObs) = \frac{\phi_0(\joinVar_i)}{\nObs} \quad \text{ with } \quad \phi_0 = \argmin_{\substack{\phi \in L^2(\hat{\source}^\nObs_{\joinVar}), \\ \int \phi(\z) d \hat{\source}^\nObs_{\joinVar} = 1}} \mmd^2_{\kernel}(\hat{\source}^{\nObs}_{\phi}, \empTar^\nObs) + \frac{\regHyp }{\nObs} \int \convRegFct(\phi(\z)) d \hat{\source}^\nObs_{\joinVar},
\end{equation}
where $\hat{\source}^{\nObs}_{\phi} = \sum_{i = 1}^{\nObs} \phi(\joinVar_i) \dirac{\joinVar_i} / \nObs$ and the function $\convRegFct(x)$ is strictly convex, continuous on $[0 , \clip]$, differentiable on $(0,\clip)$, and $\convRegFct(x) = + \infty$ when $x \notin [0,\clip]$, as defined in Theorem \ref{thm:dualProblem}. In the following, we will denote by $\Phi$ the feature mapping into $\rkhs$, that is: $\forall \z \in \txSupp, \, \Phi(\z) = \kernel(\z,\cdot) \in \rkhs$. Recall also that the Bochner integral, which is the extension of the Lebesgue integral for functions taking values in an Hilbert space (more generally in Banach spaces), can be interchanged with any continuous linear operator, in particular: $\forall \fct \in \rkhs, \,\langle \fct , \int \phi(z) \Phi(z) d \nu \rangle_\rkhs = \int \phi(z) \langle \fct ,  \Phi(z) \rangle_\rkhs  d \nu = \int \phi(z) \fct(z) d \nu$, for some measure $\nu$. Using this property and Cauchy-Schwarz inequality, we restate a known result about $\mmd$ \cite{gretton2007kernel,gretton2012kernel}:
\begin{align} \label{eq:rewriteMMD}
    \mmd^2_{\kernel}(\nu, \pi) &= \sup_{\fct \in \mathcal{B}_{\rkhs}(0,1)} \left( \int \fct(z) d \nu - \int  \fct(z) d \pi \right)^2  \nonumber\\
    & =\sup_{\fct \in \mathcal{B}_{\rkhs}(0,1)} \left \langle \fct , \int \Phi(z) d \nu - \int  \Phi(z) d \pi \right \rangle_\rkhs^2 \nonumber \\
    & = \left \| \int \Phi(z) d \nu - \int  \Phi(z) d \pi \right \|_\rkhs^2,
\end{align}
where we consider from now on $\nu$ and $\pi$ as being two probability measures. Problem \eqref{eq:rewrite} can then be written as a special case (taking $\nu = \hat{\source}^\nObs_{\joinVar}$ and $\pi = \empTar^\nObs$) of the following optimization program:
\begin{equation} \label{eq:finalPbVersion}
    \frac{\regHyp}{\nObs} \cdot \inf_{\substack{\phi \in L^2(\nu), \\ \int \phi(\z) d \nu = 1}} \int \convRegFct(\phi(\z)) d \nu + \frac{\nObs}{\regHyp} \left \| \int \phi(z) \Phi(z) d \nu - \mathbf{a} \right \|_\rkhs^2 ,
\end{equation}
where $\mathbf{a} = \int \Phi(\z) d \pi$. Using words, this program corresponds to the problem of finding a distribution with density $\phi$ w.r.t.~$\nu$ (which is bounded by $\clip$ as $\rho$ is infinite outside $[0,\clip]$) such that it leads to an expectation of $\Phi(Z)$ in $\rkhs$ close enough to $\mathbf{a} \in \rkhs$, while keeping the penalty depending on $\rho$ as low as possible. Namely, if $\rho(x) = x^2$ on $[0,\clip]$, then it seeks to find such density with a low L2 norm. When $\rho(x) = x \log x$ on $[0,\clip]$, this becomes the problem of finding such distribution with a large (differential) entropy.  Finally, let's recall also that, for any convex function $\rho$ defined on an Hilbert space with inner product $\langle \cdot , \cdot \rangle$, its Legendre transform, denoted $\rho^*$, is the convex function defined as:
$$
\rho^*(x) = \sup_{y} (\langle x , y \rangle - \rho(y)).
$$
We will make good use of the following properties of the Legendre transform of $\rho$.
\begin{lemma} \label{lem:legTransRho}
Let $\rho^*(x) = \sup_{y \in \mathbb{R}} x y - \rho(y), \forall x \in \mathbb{R}$, the Legendre transform of $\rho$ from Theorem \ref{thm:dualProblem}. Then:
\begin{itemize}
    \item  $\rho^*$ is finite and differentiable on $\mathbb{R}$, and its differential is bounded. More precisely, $\forall x \in \mathbb{R}, \frac{d\rho^*}{d x}(x) \in [0, \clip]$.
    \item Let $\frac{d\rho}{d x}(0)$ and $\frac{d\rho}{d x}(\clip)$ the limits (possibly infinite) of the differential of $\rho$ respectively at $0$ (from above) and at $\clip$ (from below). Then $\rho^*(x) = -\rho(0)$ for $x \in (-\infty, \frac{d\rho}{d x}(0)]$; $\rho^*(x) = \clip x - \rho(\clip)$ for $x \in [\frac{d\rho}{d x}(\clip), +\infty)$; and $\rho^*$ is strictly convex on $(\frac{d\rho}{d x}(0),\frac{d\rho}{d x}(\clip))$.
\end{itemize}
\end{lemma}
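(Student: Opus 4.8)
The plan is to reduce the supremum defining $\rho^{*}$ to a maximization over the compact interval $[0,\clip]$ and then read off every claimed property from the first-order analysis of a strictly concave objective. Since $\rho = +\infty$ outside $[0,\clip]$ and $\rho$ is continuous (hence bounded) on the compact set $[0,\clip]$, for every $x \in \mathbb{R}$ we have $\rho^{*}(x) = \sup_{y \in [0,\clip]} (xy - \rho(y))$, a supremum of a continuous function over a compact set. Therefore the supremum is finite (bounded above by $\max(0,\clip x) - \min_{[0,\clip]} \rho$ and below by $-\rho(0)$, both finite by continuity) and is attained. Because $\rho$ is \emph{strictly} convex, the map $y \mapsto xy - \rho(y)$ is strictly concave, so its maximizer, which I will call $y^{\ast}(x) \in [0,\clip]$, is unique.

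For differentiability and the derivative bound, I would invoke the standard fact from convex analysis that for a proper closed convex function the subdifferential of the conjugate at $x$ equals the set of maximizers appearing in the definition of $\rho^{*}(x)$, and that when this set is a singleton the conjugate is differentiable there with derivative equal to that maximizer (equivalently, an application of Danskin's/the envelope theorem). Since the maximizer $y^{\ast}(x)$ is unique for every $x$, this immediately yields that $\rho^{*}$ is differentiable on all of $\mathbb{R}$ with $\frac{d\rho^{*}}{dx}(x) = y^{\ast}(x)$, and the bound $\frac{d\rho^{*}}{dx}(x) \in [0,\clip]$ follows because $y^{\ast}(x) \in [0,\clip]$.

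It remains to locate $y^{\ast}(x)$ and derive the explicit formulas. As $\rho$ is strictly convex, $\rho'$ is strictly increasing on $(0,\clip)$, with one-sided limits $\frac{d\rho}{dx}(0)$ and $\frac{d\rho}{dx}(\clip)$ at the endpoints. The derivative of the objective is $x - \rho'(y)$. If $x \leq \frac{d\rho}{dx}(0)$ then $x - \rho'(y) \leq 0$ throughout $(0,\clip)$, so the objective is non-increasing and $y^{\ast}(x)=0$, giving $\rho^{*}(x) = -\rho(0)$; symmetrically, if $x \geq \frac{d\rho}{dx}(\clip)$ the objective is non-decreasing and $y^{\ast}(x)=\clip$, giving $\rho^{*}(x) = \clip x - \rho(\clip)$. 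For $x \in (\frac{d\rho}{dx}(0),\frac{d\rho}{dx}(\clip))$ the maximizer is the interior solution of $\rho'(y)=x$, i.e.\ $y^{\ast}(x) = (\rho')^{-1}(x)$, where $(\rho')^{-1}$ is well defined, continuous and strictly increasing because $\rho'$ is. Hence on this interval $\frac{d\rho^{*}}{dx} = y^{\ast} = (\rho')^{-1}$ is strictly increasing, which is exactly strict convexity of $\rho^{*}$ there.

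The main obstacle is technical rather than conceptual: the one-sided limits $\frac{d\rho}{dx}(0)$ and $\frac{d\rho}{dx}(\clip)$ may be infinite, so the three regimes must be phrased uniformly (the middle interval may be all of $\mathbb{R}$, or one tail may be empty), and one must verify that $y^{\ast}$ is continuous across the transition points $x = \frac{d\rho}{dx}(0)$ and $x = \frac{d\rho}{dx}(\clip)$ — that is, $(\rho')^{-1}(x) \to 0$ and $\to \clip$ at the respective endpoints — in order to confirm global differentiability rather than mere piecewise differentiability. This follows from the continuity and monotonicity of $\rho'$, but it is the place where care is genuinely needed.
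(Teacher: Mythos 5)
Your proof is correct and follows essentially the same route as the paper's: restrict the supremum to $[0,\clip]$, use strict convexity of $\rho$ to get a unique maximizer $y^*(x)$, identify $\partial\rho^*(x)$ with the set of maximizers (Fenchel--Young) so that the singleton subdifferential gives differentiability with $\frac{d\rho^*}{dx}(x) = y^*(x) \in [0,\clip]$, and then use first-order conditions together with strict monotonicity of $(\rho')^{-1}$ to obtain the explicit formulas and strict convexity on $(\frac{d\rho}{dx}(0),\frac{d\rho}{dx}(\clip))$. The worry in your final paragraph is moot: global differentiability is already secured by the singleton-subdifferential argument at every $x \in \mathbb{R}$, so no separate continuity check of $y^*$ at the transition points is required.
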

\begin{proof}
Note that, from definition of $\rho$, we have $\rho^*(x) = \sup_{y \in [0,\clip]} x y - \rho(y)$. $\rho$ being continuous and strictly convex on $[0,\clip]$, we have $\forall x \in \mathbb{R}, \exists ! \,y_x \in [0,\clip] \,\, s.t. \,\, \rho^*(x) = x y_x - \rho(y_x)$. Recall a well-known property [see \cite{villani2003topics}, Proposition 2.4] that is actually true for any proper lower semi-continuous convex function like $\rho$:
\begin{equation}\label{eq:convEqual}
    \forall x,y \in \mathbb{R}, \quad x y = \rho(y) + \rho^*(x)  \Longleftrightarrow y \in \partial \rho^*(x) \Longleftrightarrow x \in \partial \rho(y),
\end{equation}
where $\partial$ refers to the sub-differential. The existence and unicity of $y_x$ imply that $\forall x \in \mathbb{R}, \partial \rho^*(x) = \{ y_x \}$, showing the first point of the lemma.

For the second point, notice that by first order condition $y_x \in (0, \clip) \Longleftrightarrow x \in (\frac{d\rho}{d x}(0),\frac{d\rho}{d x}(\clip))$. Hence, this explains the expression of $\rho^*$ outside $(\frac{d\rho}{d x}(0),\frac{d\rho}{d x}(\clip))$. Also, because $\rho$ is strictly convex and differentiable on $(0, \clip)$, then $\frac{d\rho}{d x}$ is strictly increasing and invertible on $(0,\clip)$. This means that $y_x = \frac{d\rho}{d x}^{-1}(x)$ is also strictly increasing w.r.t.~$x$ on $(\frac{d\rho}{d x}(0),\frac{d\rho}{d x}(\clip))$. $\frac{d\rho^*}{d x}(x)$ being equal to $y_x$, we have therefore the strict convexity of $\rho^*$ on $(\frac{d\rho}{d x}(0),\frac{d\rho}{d x}(\clip))$.
\end{proof}

It is interesting to look at what $\rho^*$ and $\frac{d\rho^*}{d x}$ looks like for some common regularizers.

\paragraph{For quadratic regularization.} When $\rho(x) = x^2$ for $x \in [0, \clip]$ and $\rho(x) = + \infty$ elsewhere, the Legendre transform of $\rho$ is:
$$
\rho^*(x) = \sup_{y \in [0,\clip]} x y - y^2 = \left\{ \begin{array}{cc}
      0 & \text{ if } x\leq 0,  \\
      x^2/4 & \text{ if } x \in (0 , 2\clip),\\
     \clip x - \clip^2 & \text{ otherwise}.
\end{array}\right.
$$
Notice also that its derivative is $1/2$-Lipschitz, more precisely:
$$
\frac{d\rho^*}{d x}(x) = \left\{ \begin{array}{cc}
      0 & \text{ if } x\leq 0,  \\
      x/2 & \text{ if } x \in (0 , 2\clip),\\
     \clip & \text{ otherwise}.
\end{array}\right.
$$

\paragraph{For entropic regularization.} When $\rho(x) = x \log x$ for $x \in [0, \clip]$ (equals $0$ at $0$ of course) and $\rho(x) = + \infty$ elsewhere, the Legendre transform of $\rho$ is:
$$
\rho^*(x) = \sup_{y \in [0,\clip]} x y - y^2 = \left\{ \begin{array}{cc}
      \exp(x - 1) & \text{ if } x \in (-\infty , 1 + \log \clip),\\
     \clip x - \clip \log(\clip) & \text{ otherwise}.
\end{array}\right.
$$
And its derivative is:
$$
\frac{d\rho^*}{d x}(x) = \left\{ \begin{array}{cc}
     \exp(x - 1) & \text{ if } x \in (-\infty , 1 + \log \clip),\\
     \clip & \text{ otherwise}.
\end{array}\right.
$$

To analyse program \eqref{eq:finalPbVersion} we are going to make use of the well-known Fenchel-Rockafellar theorem, of which we give below an adaptation to our setting of the version from \cite{villani2003topics} (Theorem 1.9). Recall that $L^2(\nu)$ is an Hilbert space and that, by Riesz representation theorem, any Hilbert space can be identified with its own dual.
\begin{theorem}[Adaptation of Fenchel-Rockafellar Theorem] \label{thm:fenchRock} Let $(E,\langle ., . \rangle)$ be an Hilbert space and $\Theta$, $\Xi$ two convex functions on $E$ that could be equal to $+\infty$ for some $x \in E$. Let $\Theta^*$ and $\Xi^*$ their Legendre transforms. If there exists $x_0 \in E$ such that:
$$
\Theta(x_0) < + \infty, \quad \Xi(x_0) < +\infty, \quad \text{and} \quad \Theta \text{ is continuous at } x_0.
$$
Then,
$$
\inf_{x \in E} \Theta(x) + \Xi(x) = - \min_{x \in E} \Theta^*(x) + \Xi^*(-x).
$$
The $\min$ on the RHS signifies, of course, that the optimum is achieved by some $x \in E$.
\end{theorem}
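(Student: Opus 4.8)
The plan is to establish the identity by proving the two inequalities separately. The direction $\inf_x[\Theta(x)+\Xi(x)] \ge -\inf_y[\Theta^*(y)+\Xi^*(-y)]$ (weak duality) is elementary and comes from the Fenchel--Young inequality; the reverse inequality, together with the assertion that the infimum on the right is actually attained (hence a $\min$), is the substantive part and will come from a geometric Hahn--Banach separation argument in the product space $E \times \mathbb{R}$. I would first set $p := \inf_x[\Theta(x)+\Xi(x)]$ and reduce to the case $p$ finite: the hypothesis gives $p \le \Theta(x_0)+\Xi(x_0) < +\infty$, while if $p = -\infty$ the claimed equality follows at once from weak duality.

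For weak duality, fix any $x \in E$ and $y \in E$ (identifying $E^*$ with $E$ via Riesz). The Fenchel--Young inequality applied to each conjugate gives $\Theta(x)+\Theta^*(y) \ge \langle y, x\rangle$ and $\Xi(x)+\Xi^*(-y) \ge -\langle y, x\rangle$. Adding these and rearranging yields $\Theta(x)+\Xi(x) \ge -[\Theta^*(y)+\Xi^*(-y)]$. Taking the infimum over $x$ on the left and the supremum over $y$ on the right gives $p \ge -\inf_y[\Theta^*(y)+\Xi^*(-y)]$, which is precisely ``primal $\ge$ dual''.

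For the reverse inequality I would introduce in $E \times \mathbb{R}$ the convex sets $C_1 = \{(x,r) : r \ge \Theta(x)\}$ (the epigraph of $\Theta$) and $C_2 = \{(x,r) : r \le p - \Xi(x)\}$ (the hypograph of the concave map $p - \Xi$). Because $\Theta$ is finite and continuous at $x_0$, it is locally bounded there, so $C_1$ has nonempty interior. Since $\Theta(x)+\Xi(x) \ge p$ for every $x$, one checks that $\operatorname{int}(C_1) \cap C_2 = \emptyset$: a common point would satisfy $\Theta(x) < r \le p - \Xi(x)$, contradicting $p \le \Theta(x)+\Xi(x)$. The geometric Hahn--Banach theorem then provides a nonzero $(y,\beta) \in E \times \mathbb{R}$ and a scalar $\alpha$ with $\langle y, x\rangle + \beta r \ge \alpha$ on $C_1$ and $\langle y, x\rangle + \beta r \le \alpha$ on $C_2$. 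Letting $r \to +\infty$ within $C_1$ and $r \to -\infty$ within $C_2$ forces $\beta \ge 0$. Normalizing $\beta = 1$ and evaluating the two inequalities at $r = \Theta(x)$ and $r = p - \Xi(x)$ respectively gives $\Theta^*(-y) \le -\alpha$ and $\Xi^*(y) \le \alpha - p$; writing $\tilde y := -y$ these add to $\Theta^*(\tilde y) + \Xi^*(-\tilde y) \le -p$, so $-[\Theta^*(\tilde y)+\Xi^*(-\tilde y)] \ge p$. Combined with weak duality this is an equality realized at $\tilde y$, yielding both $p = -\min_x[\Theta^*(x)+\Xi^*(-x)]$ and the attainment of the minimum.

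The hard part is the case $\beta = 0$, which must be excluded to allow the normalization $\beta = 1$, and this is exactly where the continuity hypothesis on $\Theta$ at $x_0$ is indispensable. If $\beta = 0$, the separation inequalities reduce to $\langle y, x\rangle \ge \alpha \ge \langle y, x'\rangle$ for all $x \in \operatorname{dom}\Theta$ and $x' \in \operatorname{dom}\Xi$; taking $x = x' = x_0$ forces $\langle y, x_0\rangle = \alpha$, while continuity of $\Theta$ at $x_0$ places $x_0$ in the interior of $\operatorname{dom}\Theta$, so $\langle y, \cdot\rangle$ attains a local minimum at $x_0$ over an open ball, which forces $y = 0$ and contradicts $(y,\beta) \neq 0$. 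Verifying the nonempty interior of $C_1$ and closing this $\beta = 0$ loophole are the two technical points I expect to require the most care; everything else is bookkeeping with the conjugate definitions.
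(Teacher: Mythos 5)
Your proof is correct. Note, however, that the paper itself offers no proof of this statement: it is imported wholesale as an adaptation of Theorem 1.9 of Villani's \emph{Topics in Optimal Transportation}, with the Hilbert-space identification $E^*\cong E$ via Riesz replacing the Banach-space duality of the original. What you have written is essentially a reconstruction of the classical proof found in that reference (and in Brezis): weak duality from Fenchel--Young, then strong duality and dual attainment by separating $\operatorname{int}(\operatorname{epi}\Theta)$ from the hypograph $\{(x,r): r \le p-\Xi(x)\}$ in $E\times\mathbb{R}$, with the continuity hypothesis doing double duty --- it gives $\operatorname{epi}\Theta$ nonempty interior, and it rules out the vertical-hyperplane case $\beta=0$, which you correctly identify as the crux. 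Three small points worth tightening if this were written out in full: (i) geometric Hahn--Banach separates the \emph{open} set $\operatorname{int}(C_1)$ from $C_2$, so the inequality $\langle y,x\rangle+\beta r\ge\alpha$ holds first on $\operatorname{int}(C_1)$ and must be extended to all of $C_1$ using $C_1\subseteq\overline{\operatorname{int}(C_1)}$ (valid because $C_1$ is convex with nonempty interior); (ii) the separation argument needs $C_2\ne\emptyset$, which holds since $(x_0,\,p-\Xi(x_0))\in C_2$ once $p$ is finite --- worth stating, as it is exactly where the hypothesis $\Xi(x_0)<+\infty$ enters; (iii) in the degenerate case $p=-\infty$, weak duality forces $\Theta^*(y)+\Xi^*(-y)=+\infty$ for every $y$, so the claimed ``$\min$'' is attained only in the vacuous sense that the dual objective is identically $+\infty$; this matches the convention implicit in the theorem statement but deserves a sentence. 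None of these affects the validity of your argument.
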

 
Using Theorem \ref{thm:fenchRock} we can now derive the following dual formulation of problem \ref{eq:finalPbVersion}:
 
\begin{lemma}\label{lem:duality}
The minimization problem of \eqref{eq:finalPbVersion}, when not being equal to $+\infty$, admits a $\nu$-a.s.~unique solution (hence the $\inf$ has been changed to a $\min$ below) and has the following dual formulation:
$$
\min_{\substack{\phi \in L^2(\nu), \\ \int \phi(\z) d \nu = 1}} \int \convRegFct(\phi(\z)) d \nu + \frac{\nObs}{\regHyp} \left \| \int \phi(z) \Phi(z) d \nu - \mathbf{a} \right \|_\rkhs^2 = - \inf_{\mu \in \mathbb{R}, \fct \in \rkhs } \int \rho^*\left(- \mu - \fct (z) \right) d \nu + \mu + \inner{\fct}{\mathbf{a}} + \frac{\regHyp}{4 \nObs} \| f \|^2_{\rkhs}.
$$
 Furthermore, if $(\mu^0,\fct^0)$ is any optimal solution of the RHS, then $\phi_0(z) = \frac{d\rho^*}{d x}(- \mu^0 -  \fct^0 (z))$ is the $\nu$-a.s.~unique solution of the LHS, that is of \eqref{eq:finalPbVersion}.
\end{lemma}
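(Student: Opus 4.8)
The plan is to apply the Fenchel--Rockafellar theorem (Theorem~\ref{thm:fenchRock}) on the Hilbert space $E = L^2(\nu)$, after writing the primal objective \eqref{eq:finalPbVersion} as $\inf_{\phi}\Theta(\phi) + \Xi(\phi)$ for a suitable pair of convex functions. The decomposition must be chosen with the qualification hypothesis of Theorem~\ref{thm:fenchRock} in mind, which requires continuity of $\Theta$ at a feasible point: since $\convRegFct = +\infty$ outside $[0,\clip]$, the integral penalty $\phi \mapsto \int \convRegFct(\phi)\,d\nu$ is nowhere continuous on $L^2(\nu)$, so it \emph{cannot} be $\Theta$. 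I would therefore set $\Theta(\phi) = \frac{\nObs}{\regHyp}\| \int \phi(\z)\Phi(\z)\,d\nu - \mathbf{a} \|_\rkhs^2$ and let $\Xi(\phi) = \int \convRegFct(\phi(\z))\,d\nu$ augmented with the indicator of the normalization constraint $\int \phi\,d\nu = 1$. The map $\phi \mapsto \int \phi \Phi\,d\nu$ is bounded from $L^2(\nu)$ into $\rkhs$ (the kernel is continuous on the compact $\txSupp$, hence bounded), so $\Theta$ is a finite convex function that is continuous everywhere. The qualification condition is then met at $\phi_0 \equiv 1$, feasible whenever $\clip \geq 1$ (a density bounded by $\clip$ integrating to one forces $\clip \geq 1$): there $\Theta(\phi_0),\Xi(\phi_0) < +\infty$ and $\Theta$ is continuous.

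Next I would compute the two Legendre transforms. For $\Theta = q \circ A$ with $q(u) = \frac{\nObs}{\regHyp}\|u - \mathbf{a}\|_\rkhs^2$ and $A\phi = \int \phi\Phi\,d\nu$, a completion of the square gives $q^*(\fct) = \inner{\fct}{\mathbf{a}} + \frac{\regHyp}{4\nObs}\|\fct\|_\rkhs^2$, and the reproducing identity $\inner{\fct}{\int \phi\Phi\,d\nu} = \int \phi(\z)\fct(\z)\,d\nu$ (interchange of the Bochner integral with $\inner{\fct}{\cdot}$, exactly as used in \eqref{eq:rewriteMMD}) shows that $\Theta^*$ is finite only on dual variables of the form $\fct(\cdot)$ with $\fct \in \rkhs$, where it equals $\inner{\fct}{\mathbf{a}} + \frac{\regHyp}{4\nObs}\|\fct\|_\rkhs^2$. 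For $\Xi$, I would use the standard interchange of supremum and integral for integral functionals (the conjugate-of-normal-integrand identity $(\int \convRegFct(\phi)\,d\nu)^* = \int \rho^*(\psi)\,d\nu$) together with a scalar multiplier $\mu$ for the affine constraint $\int\phi\,d\nu = 1$, yielding $\Xi^*(\chi) = \inf_{\mu \in \mathbb{R}} \mu + \int \rho^*(\chi(\z) - \mu)\,d\nu$. Applying Theorem~\ref{thm:fenchRock}, the minimum on the dual side ranges only over $\psi = \fct \in \rkhs$ (where $\Theta^*$ is finite); substituting $\Theta^*(\fct)$ and $\Xi^*(-\fct)$ and merging the two infima over $\mu$ and $\fct$ reproduces exactly the right-hand side of the lemma.

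It remains to recover the primal optimizer and argue uniqueness. Given an optimal dual pair $(\mu^0,\fct^0)$, equality in the Fenchel--Young inequality at the optimum forces the global identity relating $\Xi(\phi_0)$, $\Xi^*(-\mu^0 - \fct^0)$, and $\int(-\mu^0 - \fct^0)\phi_0\,d\nu$; since the pointwise inequality $\psi(\z)\phi_0(\z) \leq \convRegFct(\phi_0(\z)) + \rho^*(\psi(\z))$ holds everywhere, the integral equality upgrades to pointwise ($\nu$-a.s.)\ equality, hence $\phi_0(\z) \in \partial\rho^*(-\mu^0 - \fct^0(\z))$. By Lemma~\ref{lem:legTransRho}, $\rho^*$ is differentiable with single-valued subdifferential, so $\phi_0(\z) = \frac{d\rho^*}{d x}(-\mu^0 - \fct^0(\z))$. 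The $\nu$-a.s.\ uniqueness of $\phi_0$ follows from strict convexity of the primal objective: $\int \convRegFct(\phi)\,d\nu$ is strictly convex in $\phi$ because $\convRegFct$ is strictly convex on $[0,\clip]$, while the quadratic $\Theta$ is convex.

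The main obstacle I anticipate is the clean computation of $\Xi^*$: justifying the interchange of supremum and integration (the conjugate-of-integral-functional identity) is a measurable-selection / normal-integrand argument, and the simultaneous treatment of the normalization constraint through the multiplier $\mu$ must be carried out so that the exchange of the inner $\inf_\mu$ with the outer minimization is legitimate. The second delicate point is structural rather than computational: placing the quadratic RKHS term (not the penalty) as the continuous function $\Theta$ is precisely what makes the qualification hypothesis of Theorem~\ref{thm:fenchRock} hold, and reversing this assignment would leave the duality unjustified.
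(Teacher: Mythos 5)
Your dual derivation is sound, but note that it is a mirror image of the paper's argument rather than the same one: you apply Theorem \ref{thm:fenchRock} to the primal problem \eqref{eq:finalPbVersion}, taking $\Theta$ to be the quadratic RKHS term (continuous everywhere) and $\Xi$ the penalty plus constraint, whereas the paper applies it to the \emph{right-hand side} problem, taking $\Theta(\phi) = \int \rho^*(\phi(\z))\,d\nu$ --- which \emph{is} continuous on $L^2(\nu)$, since by Lemma \ref{lem:legTransRho} the derivative of $\rho^*$ is bounded by $\clip$, so $\Theta$ is $\clip$-Lipschitz --- and $\Xi$ the indicator-type functional supported on $\{\phi = -\mu - \fct,\ \fct \in \rkhs\}$. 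So your structural remark that ``the integral penalty can never be the continuous piece'' is correct for decompositions of the primal, but it overlooks the paper's actual route: one conjugates the \emph{other} problem, whose integrand is the finite Lipschitz function $\rho^*$ rather than the extended-valued $\rho$.

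This difference is not cosmetic, and it is where your proof has a genuine gap. Fenchel--Rockafellar gives attainment only on the conjugate side of whichever problem you start from: starting from the primal, as you do, you get attainment of the \emph{dual}; starting from the dual, as the paper does, you get attainment of the \emph{primal}. Primal attainment is precisely part of what Lemma \ref{lem:duality} asserts (``admits a $\nu$-a.s.~unique solution'', the inf becoming a min), and your argument never establishes it: your recovery step takes an optimal $\phi_0$ as given and then characterizes it via Fenchel--Young equality, and strict convexity of $\rho$ yields uniqueness only conditionally on existence. The gap is fixable within your setup, either constructively --- define $\phi_0 := \frac{d\rho^*}{d x}(-\mu^0 - \fct^0(\cdot))$, use the dual first-order conditions to check $\int \phi_0\,d\nu = 1$ and $0 \le \phi_0 \le \clip$, and verify that its primal value equals minus the dual value --- or by the direct method, noting that $\{\phi \in L^2(\nu): 0 \le \phi \le \clip,\ \int \phi\,d\nu = 1\}$ is convex, closed and bounded, hence weakly compact, while the objective is convex and lower semicontinuous, hence weakly lsc. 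But as written, the existence claim is simply unproved. (Two smaller points to tidy up if you keep your direction: attainment of the inner $\inf_\mu$ in your formula for $\Xi^*$, and lifting your $L^2(\nu)$-level dual optimizer $\psi^0$ to an actual pair $(\mu^0, \fct^0)$ with $\fct^0 \in \rkhs$, since distinct elements of $\rkhs$ can coincide $\nu$-a.s.)
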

\begin{proof}
We are going to use Fenchel-Rockafellar theorem starting with the RHS problem as the regularity conditions required by the theorem are verified for it. This would also imply the existence of the optimum of the LHS, the $\nu$-a.s.~unicity coming from the fact that $\rho$ is strictly convex on its domain. Define for any $\phi \in L^2(\nu)$:
\begin{align}\label{eq:thetaXiFct}
    \Theta(\phi) & = \int \rho^*(\phi(z)) d\nu, \\
    \Xi(\phi) & = \left\{ \begin{array}{cc}
         & \mu +  \inner{\fct}{\mathbf{a}} + \frac{\regHyp}{4 \nObs} \| f \|^2_{\rkhs}, \quad \text{if} \quad \phi = - \mu - \fct, \, \text{with } \fct \in \rkhs, \\
         & +\infty \quad  \quad \quad \quad \quad \quad  \text{otherwise}.  \quad \quad \quad \quad \quad
    \end{array} \right.\nonumber
\end{align}
If $1 \in \rkhs$, then replace the condition $\fct \in \rkhs$ by $\fct$ belongs to the orthogonal complement of $\{ 1\}$ in $\rkhs$. Obviously $ - \text{RHS} = \inf_{\phi \in L^2(\nu)} \Theta(\phi) + \Xi(\phi)$. We can see also that the regularity conditions required by Fenchel-Rockafellar theorem are easily verified: e.g.~$\Theta(0) < + \infty$ and $\Xi(0) < + \infty$ ; the continuity of $\Theta$ is a consequence of Lemma \ref{lem:legTransRho} (indeed, as the derivative of $\rho^*$ is bounded, in absolute value, by $\clip$ it is actually $\clip$-Lipschitz, and so is $\Theta$ on $L^2(\nu)$). Let's look at the Legendre transforms.
\begin{equation} \label{eq:thetaLegendre}
    \Theta^*(\phi) = \sup_{\psi \in L^2(\nu)} \langle \phi , \psi \rangle_{L^2(\nu)} - \Theta(\psi) = \sup_{\psi \in L^2(\nu)} \int \phi(z) \psi(z) - \rho^*(\psi(z)) d \nu \leq  \int \rho(\phi(z)) d \nu,
\end{equation}
where the last inequality is just a direct consequence of the definition of the Legendre transform. As matter of fact, this inequality is an equality. For instance, for any $b>0$ construct $\psi_b \in L^2(\nu)$ such that:
$$
\psi_b(z) = \frac{d\rho}{d x} (\phi(z)) \text{ when } \phi(z) \in (0, \clip) \quad ; \quad \psi_b(z) = -b \text{ when } \phi(z) \leq 0 \quad ; \quad \psi_b(z) = b \text{ when } \phi(z) \geq \clip.
$$
The constant $b$ is just here to make sure that $\psi_b$ is really in $L^2(\nu)$. Using \eqref{eq:convEqual} and Lemma \ref{lem:legTransRho}, it can be shown that plugging $\psi_b$ in the supremum of equation \eqref{eq:thetaLegendre} and sending $b \rightarrow + \infty$ yields the sought equality. That is:
$$
\Theta^*(\phi) =  \int \rho(\phi(z)) d \nu.
$$
Now let's turn to the Legendre transform of $\Xi$:
$$
\Xi^*(\phi) = \sup_{\psi \in L^2(\nu)} \langle \phi , \psi \rangle_{L^2(\nu)} - \Xi(\psi) = \sup_{\mu, \fct} \mu \int (\phi(z) + 1) d \nu  + \inner{\fct}{\int (\phi(z) \Phi(z) + \mathbf{a}) d \nu}  - \frac{\regHyp}{4 \nObs} \| \fct \|^2_\rkhs \approxDelta.
$$
Note that from the first term in the R.H.S.~we would have $\Xi^*(\phi) = + \infty$ if $\int \phi(z) d \nu \neq -1$.  By Cauchy-Schwarz inequality, the remaining terms are bounded by $\|\fct\|_\rkhs \|\int (\phi(z) \Phi(z) + \mathbf{a}) d \nu\|_\rkhs  - \frac{\regHyp}{4 \nObs} \| \fct \|^2_\rkhs$, which is attained when $\fct \propto \int (\phi(z) \Phi(z) + \mathbf{a}) d \nu$. Optimizing w.r.t.~$\|\fct\|_\rkhs$ yields:
$$
\Xi^*(\phi) = \left\{ \begin{array}{cc}
     & \frac{\nObs}{\regHyp}\|\int (\phi(z) \Phi(z) + \mathbf{a}) d \nu\|^2_\rkhs \quad \text{ if } \int (\phi(z) + 1) d \nu = 0,\\
     & \quad\quad\quad \quad+\infty \quad\quad \quad\quad\quad\quad\text{otherwise.} \quad\quad\quad\quad\quad. 
\end{array}\right.
$$
Applying Fenchel-Rockafellar theorem yields the dual formulation of \eqref{eq:finalPbVersion}. Furthermore, let $(\mu^0,\fct^0)$ any optimal solution of the RHS and $\psi_0(z) = - \mu - \fct(z)$. Let also $\phi_0$ the optimum for the LHS problem. From our use of the Fenchel-Rockafellar theorem we can see that:
$$
\Theta(\phi_0) + \Theta^*(\psi_0) + \Xi(\phi_0) + \Xi^*(-\psi_0) = 0.
$$
As, again by definition of Legendre transform, $\Theta(\phi_0) + \Theta^*(\psi_0) \geq \langle \phi_0 , \psi_0 \rangle_{L^2(\nu)}$ and $\Xi(\phi_0) + \Xi^*(-\psi_0) \geq - \langle \phi_0 , \psi_0 \rangle_{L^2(\nu)}$, the above equality simply means that $\Theta(\phi_0) + \Theta^*(\psi_0) = \langle \phi_0 , \psi_0 \rangle_{L^2(\nu)}$, which can be rewritten as:
$$
\int \rho(\phi_0(x)) + \rho^*(\psi_0(x)) - \phi_0(x)\psi_0(x) d \nu(x) = 0.
$$
Once again, the definition of Legendre transform gives that $\rho(\phi_0(x)) + \rho^*(\psi_0(x)) - \phi_0(x)\psi_0(x) \geq 0$. Hence the function inside the above integral is equal to zero $\nu$-almost surely. Therefore, by using \eqref{eq:convEqual}, we conclude with the last result of this lemma.
\end{proof}

\subsubsection*{Proof of Theorem \ref{thm:dualProblem}} 

Theorem \ref{thm:dualProblem} is a consequence of the reformulation of the optimization program from Definition \ref{def:cbdmIPM} into \eqref{eq:rewrite} and \eqref{eq:finalPbVersion}, of the dual program derived in Lemma \ref{lem:duality} and of the Representer Theorem \cite{hofmann2008kernel, shalev2014understanding}: That is, when $\nu = \hat{\source}^\nObs_{\joinVar}$ and $\pi = \empTar^\nObs$ the solution $\fct^0$ of the dual problem from Lemma \ref{lem:duality} can be expressed as $\fct^0(\cdot) = \sum_{z \in \hat{\txSupp}} \alpha(z) \kernel(z,\cdot) = \boldsymbol{\alpha}^{T} \mathbf{k}(\cdot)$, where $\hat{\txSupp} = \text{supp}(\hat{\source}^\nObs_{\joinVar}) \bigcup \text{supp}(\empTar)$ and $\boldsymbol{\alpha} = (\alpha(z))_{z \in \hat{\txSupp}}$ and $\mathbf{k}(\cdot) = (\kernel(z,\cdot))_{z \in \hat{\txSupp}}$. Call also $\mathbf{K} = (\kernel(z,z'))_{z,z' \in \hat{\txSupp}}$, hence the dual problem can be rewritten as follows:
$$
\min_{\mu, \boldsymbol{\alpha}} \frac{1}{\nObs} \sum_i \rho^*\left(- \mu - \boldsymbol{\alpha}^{T} \mathbf{k}(\joinVar_i) \right) + \mu + \boldsymbol{\alpha}^T   \mathbb{E}_{\empTar}[\mathbf{k}(\joinVar)] + \frac{\regHyp}{4 \nObs} \boldsymbol{\alpha}^T \mathbf{K} \boldsymbol{\alpha}.
$$

\section{Proof Of Theorem \ref{thm:doubleConsistency}}

The consistency under condition (a) is a direct consequence of the causal learning bound from Theorem \ref{thm:causalBound}. Note that a convergence in high probability as in the bound of Theorem \ref{thm:causalBound} implies the almost sure convergence via Borel-Cantelli's lemma (see the proof of Lemma \ref{lem:firstIpmConv} afterward). So we actually just need to show the convergence toward zero of all the terms in the R.H.S. of the bound in Theorem \ref{thm:causalBound}. First, we show in the proof of Lemma \ref{lem:firstConvDistr} that the \ipm~term will indeed converges to zero. Second, under condition (a) we have that for any $\epsilon >0 $, there exists $\gamma_\epsilon$ such that $\forall \regFct \in \regClass, -2\condExp\regFct + \regFct^2$ is ($\gamma_\epsilon \gamma + \gamma^2$, $\epsilon$)-approximable by $\ipmFctClass$. Hence, we have that for any $\epsilon > 0$: $0 \leq \causalRisk(\regFctMin) - \inf_{\regFct \in \regClass} \causalRisk(\regFct) \leq u_n(\gamma_\epsilon) + \epsilon$, with $u_n(\gamma_\epsilon)$ a series converging to zero for any $\gamma_\epsilon$. As this is true for all $\epsilon > 0$, we have indeed $\causalRisk(\regFctMin) \xrightarrow[n \rightarrow +\infty]{a.s.}  \inf_{\regFct \in \regClass} \causalRisk(\regFct)$.

So actually we mainly need to prove consistency under condition (b). Now that the RKHS $\rkhs$ is of finite dimension we can define its orthogonal basis (for the RKHS inner product) as follows:
\begin{definition} \label{def:othoBasis}
As $\kernel$ is continuous and its RKHS $\rkhs$ is of finite dimension, there exists an orthonormal family of continuous functions $g_{1}, \ldots, g_{K} \in \rkhs$ on $\txSupp$ such that:
\begin{itemize}
    \item If $1 \in \rkhs$ (that is, if the constant function equal to one on $\txSupp$ is in $\rkhs$), then $(1,g_1, \ldots, g_K)$ is an orthogonal family that spans $\rkhs$.
    \item Otherwise, $g_{1}, \ldots, g_{K}$ is simply an orthonormal basis of $\rkhs$.
\end{itemize}
\end{definition}

Note that now we can replace or rewrite the feature embedding as follows $\Phi = \mathbf{g} = (g_k)_{k = 1}^{K}$, and we have accordingly $f = \sum_{k} \lambda_k g_k = \boldsymbol{\lambda}^T \mathbf{g}$ with $\boldsymbol{\lambda} = (\lambda_k)_{k =1}^K$, $\| f \|_{\rkhs} = \| \mathbf{g} \|_2$ and $\inner{f}{\mathbf{a}} = \boldsymbol{\lambda}^T \mathbf{a}$, where now $\mathbf{a} = \int \mathbf{g}(z) d \pi$. We now prove a result that shows that if $\souTarDensity$ is well-specified, that is $\souTarDensity =  \frac{d\rho^*}{d x}(- \mu^0 - \sum_{k=1}^{K} \lambda_k^0 g_k)$ for some $ \mu^0$ and $\boldsymbol{\lambda}^0$, then it turns out that $( \mu^0,\boldsymbol{\lambda}^0)$ is the unique solution of the limit dual problem from Lemma \ref{lem:duality}, with $\nu = \joinDistr$ and $\pi = \txTarget$.
\begin{lemma} \label{lem:converseSol}
Consider a probability density $\phi_0$ such that, for some $(\mu^0, \boldsymbol{\lambda}^0)$, we have $\forall \z \in \txSupp, \phi_0(\z) = \frac{d\rho^*}{d x}(- \mu^0 - \sum_{k=1}^{K} \lambda_k^0 g_k(\z)) $ which takes values in $(0, \clip)$ with non-zero probability under $\nu$. Consider the limit dual problem of Lemma \ref{lem:duality}, that is for $\lambda = 0$ or equivalently when $\nObs \rightarrow + \infty$, and with $\partial\pi / \partial\nu = \phi_0$. Then $(\mu^0, \boldsymbol{\lambda}^0)$ is the unique optimum of the dual problem from Lemma \ref{lem:duality}.
\end{lemma}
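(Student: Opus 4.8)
The plan is to recognize the limit dual problem as a finite-dimensional convex program and to show that $(\mu^0,\boldsymbol{\lambda}^0)$ is its unique stationary point. Taking $\nu = \joinDistr$, $\pi = \txTarget$ and $\regHyp = 0$ in Lemma~\ref{lem:duality} and writing $\fct = \boldsymbol{\lambda}^T \mathbf{g}$ in the basis of Definition~\ref{def:othoBasis}, the dual objective becomes $L(\mu,\boldsymbol{\lambda}) = \int \rho^*\!\left(-\mu - \boldsymbol{\lambda}^T \mathbf{g}(\z)\right) d\nu + \mu + \boldsymbol{\lambda}^T \mathbf{a}$, where $\mathbf{a} = \int \mathbf{g}(\z)\, d\pi$. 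The first thing I would record is that well-specification (Definition~\ref{def:wellSpec}) means $\partial\pi/\partial\nu = \phi_0$, so that $\mathbf{a} = \int \mathbf{g}\,\phi_0\, d\nu$ and $\int \phi_0\, d\nu = 1$ since $\phi_0$ is a probability density. Because $\rho^*$ is convex (Lemma~\ref{lem:legTransRho}), $L$ is convex, so it suffices to verify stationarity at $(\mu^0,\boldsymbol{\lambda}^0)$ and then argue uniqueness.

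For stationarity, I would differentiate $L$ and substitute the identity $\frac{d\rho^*}{d x}\!\left(-\mu^0 - (\boldsymbol{\lambda}^0)^T \mathbf{g}(\z)\right) = \phi_0(\z)$ from the hypothesis. The $\mu$-derivative equals $-\int \phi_0\, d\nu + 1 = 0$, and the $\lambda_k$-derivative equals $-\int g_k\,\phi_0\, d\nu + a_k = 0$ because $a_k = \int g_k\,\phi_0\, d\nu$. By convexity this makes $(\mu^0,\boldsymbol{\lambda}^0)$ a global minimizer.

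For uniqueness I would exploit the primal–dual correspondence of Lemma~\ref{lem:duality} together with strict monotonicity of $\frac{d\rho^*}{d x}$. Set $S = \{\z : \phi_0(\z) \in (0,\clip)\}$, which has positive $\nu$-measure by assumption; on $S$ the argument $-\mu^0 - (\boldsymbol{\lambda}^0)^T\mathbf{g}$ lies in the open interval $\left(\frac{d\rho}{d x}(0^+),\frac{d\rho}{d x}(\clip^-)\right)$ where $\rho^*$ is strictly convex, so $\frac{d\rho^*}{d x}$ is strictly increasing, hence injective, there. If $(\mu^1,\boldsymbol{\lambda}^1)$ is any other dual optimum, the $\nu$-a.s.\ uniqueness of the primal solution forces $\frac{d\rho^*}{d x}\!\left(-\mu^1 - (\boldsymbol{\lambda}^1)^T\mathbf{g}\right) = \phi_0 = \frac{d\rho^*}{d x}\!\left(-\mu^0 - (\boldsymbol{\lambda}^0)^T\mathbf{g}\right)$ $\nu$-a.s.; restricting to $S$ and using injectivity gives $(\mu^1 - \mu^0) + (\boldsymbol{\lambda}^1 - \boldsymbol{\lambda}^0)^T\mathbf{g}(\z) = 0$ for $\nu$-a.e.\ $\z \in S$, and linear independence of $\{1, g_1, \ldots, g_K\}$ on $S$ then yields $\mu^1 = \mu^0$ and $\boldsymbol{\lambda}^1 = \boldsymbol{\lambda}^0$.

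The delicate step I expect to be the main obstacle is precisely this reduction to $S$. Since $\rho^*$ is flat (affine) outside $\left(\frac{d\rho}{d x}(0^+),\frac{d\rho}{d x}(\clip^-)\right)$, the objective $L$ is not globally strictly convex, and a direction $h = c + \mathbf{v}^T\mathbf{g}$ vanishing $\nu$-a.e.\ on $S$ could in principle trace out a whole segment of minimizers along the flat region. The hypothesis that $\phi_0$ takes values in $(0,\clip)$ with positive probability is exactly what guarantees $\nu(S) > 0$ and places the relevant arguments in the strictly convex regime; the strong convexity of $\rho^*$ on finite subintervals assumed in condition (b) gives a uniform positive lower bound on $\frac{d^2\rho^*}{d x^2}$ over $S$, so that $L$ is strictly convex along every direction that does not vanish on $S$. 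Uniqueness is thus equivalent to the identifiability of $(\mu,\boldsymbol{\lambda})$, namely the linear independence of $\{1, g_1, \ldots, g_K\}$ restricted to $S$, which is the point I would verify with the most care.
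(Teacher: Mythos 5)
Your stationarity step coincides with the paper's: the paper likewise verifies that $(\mu^0,\boldsymbol{\lambda}^0)$ is an optimum by differentiating the dual objective under the integral sign and substituting $\frac{d\rho^*}{d x}\bigl(-\mu^0-(\boldsymbol{\lambda}^0)^T\mathbf{g}\bigr)=\phi_0$, then invoking convexity. For uniqueness, however, you take a genuinely different route. The paper argues that, since $\phi_0\in(0,\clip)$ with positive $\nu$-probability, the argument $-\mu^0-(\boldsymbol{\lambda}^0)^T\mathbf{g}(\z)$ lies in the interval $\bigl(\frac{d\rho}{d x}(0),\frac{d\rho}{d x}(\clip)\bigr)$ where $\rho^*$ is strictly convex, and concludes that the dual objective is strictly convex on a neighborhood of $(\mu^0,\boldsymbol{\lambda}^0)$; combined with global convexity this yields uniqueness. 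You instead go through the primal--dual recovery map of Lemma \ref{lem:duality}: any second dual optimum must recover the $\nu$-a.s.\ unique primal optimum $\phi_0$, and injectivity of $\frac{d\rho^*}{d x}$ on the strict-convexity interval forces the two affine functions to agree $\nu$-a.e.\ on $S=\{\phi_0\in(0,\clip)\}$. Two caveats on your route: Lemma \ref{lem:duality} is stated with $\regHyp>0$ (the primal there contains $\nObs/\regHyp$), so you should remark that the Fenchel--Rockafellar argument and the recovery map carry over verbatim to the limit problem, the quadratic penalty becoming a hard constraint; and only strict convexity of $\rho^*$ (Lemma \ref{lem:legTransRho}) is needed here --- the strong convexity you cite from condition (b) belongs to the later quantitative argument (Lemma \ref{lem:convLambda}), not to this lemma.

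The step you flag but do not prove --- that no nontrivial combination $c+\mathbf{v}^T\mathbf{g}$ vanishes $\nu$-a.e.\ on $S$ --- is a genuine gap, but it is exactly the same gap the paper has: the paper's jump from pointwise strict convexity of $\rho^*$ to strict convexity of $(\mu,\boldsymbol{\lambda})\mapsto\int\rho^*(-\mu-\boldsymbol{\lambda}^T\mathbf{g})\,d\nu$ near the optimum requires precisely this identifiability, and it is asserted without justification. Moreover, the condition can actually fail under the lemma's stated hypotheses, in which case the conclusion fails too. Take $\txSupp=[0,1]\cup[2,3]$, $\nu$ with Lebesgue density $3/4$ on $[0,1]$ and $1/4$ on $[2,3]$, $\rho(x)=x^2$ on $[0,\clip]$ with $\clip=2$, $K=1$, and $g_1\equiv 0$ on $[0,1]$, $g_1\equiv 1$ on $[2,3]$ (continuous since $\txSupp$ is disconnected, and linearly independent of $1$ on $\txSupp$). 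Then $(\mu^0,\lambda^0)=(-8/3,3)$ gives $\phi_0=4/3$ on $[0,1]$ and $\phi_0=0$ on $[2,3]$, so all hypotheses hold; yet $a_1=\int g_1\,d\pi=0$ and $\rho^*$ is flat where the $[2,3]$-argument lives, so every $(-8/3,\lambda)$ with $\lambda\geq 8/3$ is also a dual optimum. So the lemma, as stated in this abstract generality, needs identifiability of $\{1,g_1,\ldots,g_K\}$ on $S$ as an explicit hypothesis. In the paper's actual application ($\nu=\joinDistr$, $\pi=\txTarget=\tDistr\otimes\xDistr$, $\souTarDenBound<\clip$) the condition does hold, and the paper proves the needed fact only later, inside the proof of Theorem \ref{thm:doubleConsistency}(b): the set $A_0=\{\souTarDensity>0\}$ is dense in $\txSupp$ by the product structure of $\txTarget$ and Assumption \ref{ass:overlap}, so a continuous nontrivial combination cannot vanish $\joinDistr$-a.e.\ on it. Your instinct to single this out as the step requiring the most care is exactly right; to complete your proof (and, strictly speaking, the paper's) you must import that density argument into the lemma or add the hypothesis.
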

\begin{proof}
Recall again that we work on a compact set $\txSupp$ and the $g_k$ functions, being continuous from Definition \ref{def:othoBasis}, are therefore bounded on $\txSupp$. Also from Lemma \ref{lem:legTransRho}, having $\phi_0(\z) \in (0 , \clip)$ is equivalent to $- \mu^0 - \sum_{k=1}^{K} \lambda_k^0 g_k(\z) \in (\frac{d\rho}{d x}(0),\frac{d\rho}{d x}(\clip))$. In particular, there exists $\epsilon>0$, small enough, such that $\nu \left(- \mu^0 - \sum_{k=1}^{K} \lambda_k^0 g_k(\z) \in (\frac{d\rho}{d x}(0) + \epsilon,\frac{d\rho}{d x}(\clip) - \epsilon) \right) > 0$. $\rho^*$ being strictly convex on $(\frac{d\rho}{d x}(0),\frac{d\rho}{d x}(\clip))$, this implies that $\int \rho^*\left(- \mu - \sum_{k=1}^{K} \lambda_k g_k (\z) \right) d \nu(\z)$, as a function of $(\mu, \boldsymbol{\lambda})$, is strictly convex on a neighborhood of $(\mu^0, \boldsymbol{\lambda}^0)$. This would prove the unicity, once we proved that indeed $(\mu^0, \boldsymbol{\lambda}^0)$ is an optimum for the dual problem.

The fact that $(\mu^0, \boldsymbol{\lambda}^0)$ is an optimum is actually straightforward: Just take the differential of the objective of the dual program, and see that it equates 0 (note that in our setting we can interchange integral and differential).
\end{proof}

Recall from Theorem \ref{thm:dualProblem} that the \cbdm~weights can be derived from the solution of the corresponding dual problem. Namely, let $(\mu^n, \boldsymbol{\lambda}^n)$ the solution of the dual problem from Lemma \ref{lem:duality} with $\nu = \hat{\source}^\nObs_{\joinVar}$ and $\mathbf{a} = \hat{\mathbf{a}}(\nObs) \doteq \mathbb{E}_{\empTar}[\mathbf{g}(\joinVar)]$, then $\nObs \w_i(\nObs) = \frac{d\rho^*}{d \z}(- \mu^\nObs - \sum_{k=1}^{K} \lambda_k^\nObs g_k(\joinVar_i))$. Also, the previous lemma shows that actually if $\souTarDensity$ is well-specified, that is $\souTarDensity =  \frac{d\rho^*}{d x}(- \mu^0 - \sum_{k=1}^{K} \lambda_k^0 g_k)$ for some $ \mu^0$ and $\boldsymbol{\lambda}^0$, then it turns out that $( \mu^0,\boldsymbol{\lambda}^0)$ is the unique solution of the limit dual problem from Lemma \ref{lem:duality}, with $\lambda/n = 0$, $\nu = \joinDistr$ and $\mathbf{a} = \mathbb{E}_{\txTarget}[\mathbf{g}(\joinVar)]$. We now provide sufficient conditions such that indeed $(\mu^\nObs, \boldsymbol{\lambda}^\nObs)$ converges toward $(\mu^0, \boldsymbol{\lambda}^0)$.

\begin{lemma} \label{lem:convLambda}
Let $(\mu^\nObs, \boldsymbol{\lambda}^\nObs)$ a solution of the dual problem \ref{lem:duality} with $\nu = \hat{\source}^\nObs_{\joinVar}$ and $\mathbf{a} = \hat{\mathbf{a}}(\nObs)$ (when such optimum doesn't exist just set it to any infinite value, so that the function $1 \wedge \| (\mu^\nObs, \boldsymbol{\lambda}^\nObs) - (\mu^0, \boldsymbol{\lambda}^0) \|_2$ below is actually equal to $1$). Work under Assumption \ref{ass:overlap}, assume also that $\souTarDenBound < \clip$ and $\souTarDensity(\z) = \frac{d\rho^*}{d \z}(- \mu^0 - \sum_{k=1}^{K} \lambda_k^0 g_k(\z))$. Let $\Phi_{\nObs}(\mu, \boldsymbol{\lambda}) = \int \rho^*\left(- \mu - \sum_{k=1}^{K} \lambda_k g_k (\z) \right) d\hat{\source}^\nObs_{\joinVar}(\z)  + \mu + \sum_{i=1}^K \lambda_k \hat{a}_k(\nObs)$, suppose that there exist a neighborhood $\mathcal{B}((\mu^0, \boldsymbol{\lambda}^0),\epsilon)$ of $(\mu^0, \boldsymbol{\lambda}^0)$ and $\beta,C' >0$ independent of $\nObs$ and $\delta$, such that $\forall \nObs >0, \, \delta \in (0,1]$, with probability at least $1-\delta$:
$$
\nObs \geq C' (1 + \log(1/\delta)) \quad \Longrightarrow \quad \Phi_{\nObs} \text{ is $\beta$-strongly convex on } \mathcal{B}((\mu^0, \boldsymbol{\lambda}^0),\epsilon).
$$
Then, there exist $C>0$ independent of $\nObs$ and $\delta$, such that $\forall \nObs>0, \, \delta \in (0,1]$, with probability at least $1-\delta$:
$$
1 \wedge \| (\mu^\nObs, \boldsymbol{\lambda}^\nObs) - (\mu^0, \boldsymbol{\lambda}^0) \|_2 \leq C \sqrt{\frac{1 + \log(1/\delta)}{\nObs}},
$$
where $x\wedge y = \min\{ x,y\}$.
\end{lemma}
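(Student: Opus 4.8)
The plan is to treat this as a standard $M$-estimation consistency argument: control the deviation of the empirical dual minimizer $\theta^\nObs \doteq (\mu^\nObs, \boldsymbol{\lambda}^\nObs)$ from the population dual minimizer $\theta_0 \doteq (\mu^0, \boldsymbol{\lambda}^0)$ by combining the (assumed) local strong convexity of $\Phi_\nObs$ with a concentration bound on its gradient at $\theta_0$. Let $\Phi(\theta) = \int \rho^*(-\mu - \sum_k \lambda_k g_k(\z))\, d\joinDistr(\z) + \mu + \sum_k \lambda_k a_k$ be the population counterpart of $\Phi_\nObs$, with $a_k \doteq \mathbb{E}_{\txTarget}[g_k]$. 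By Lemma \ref{lem:converseSol}, under well-specification $\theta_0$ is the unique minimizer of $\Phi$ and $\nabla\Phi(\theta_0)=0$. Since $\rho^*$ is convex (Lemma \ref{lem:legTransRho}), both $\Phi$ and $\Phi_\nObs$ are convex in $\theta$, and $\Phi_\nObs$ is globally convex for every realization, a fact I will exploit in the localization step.

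\textbf{Gradient concentration.} First I would bound $\|\nabla\Phi_\nObs(\theta_0)\|_2$. Differentiating and using the well-specification identity $\frac{d\rho^*}{dx}(-\mu^0 - \sum_k \lambda_k^0 g_k(\z)) = \souTarDensity(\z)$, the $\mu$-coordinate of $\nabla\Phi_\nObs(\theta_0)$ equals $1 - \nObs^{-1}\sum_i \souTarDensity(\joinVar_i)$ and the $\lambda_j$-coordinate equals $\hat{a}_j(\nObs) - \nObs^{-1}\sum_i g_j(\joinVar_i)\souTarDensity(\joinVar_i)$. Each of $\souTarDensity(\joinVar_i)$ and $g_j(\joinVar_i)\souTarDensity(\joinVar_i)$ is an i.i.d.~bounded variable (recall $\souTarDensity \leq \souTarDenBound$ by Assumption \ref{ass:overlap} and $g_j$ is continuous, hence bounded on the compact $\txSupp$), with means $1$ and $a_j$ respectively since $\mathbb{E}_{\joinDistr}[\souTarDensity]=1$ and $\mathbb{E}_{\joinDistr}[g_j\souTarDensity]=\mathbb{E}_{\txTarget}[g_j]=a_j$, so a Hoeffding bound controls each at rate $O(\sqrt{\log(1/\delta)/\nObs})$. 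The term $\hat{a}_j(\nObs)=\mathbb{E}_{\empTar}[g_j]$ is, with $\empTar = \hat{\source}_\treat\otimes\hat{\source}_\cov$, the product-of-empiricals average $\nObs^{-2}\sum_{i,l} g_j(\treat_i,\cov_l)$; changing one observation $\joinVar_i$ alters it by $O(\|g_j\|_\infty/\nObs)$, so McDiarmid's inequality yields concentration around its mean $(1-\nObs^{-1})a_j + \nObs^{-1}\mathbb{E}_{\joinDistr}[g_j]$, which differs from $a_j$ by $O(1/\nObs)$. A union bound over the $K+1$ coordinates gives, with probability at least $1-\delta$, $\|\nabla\Phi_\nObs(\theta_0)\|_2 \leq C_1\sqrt{(1+\log(1/\delta))/\nObs}$ for a constant $C_1$ depending only on $\souTarDenBound$, $K$ and $\max_j\|g_j\|_\infty$.

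\textbf{Localization and strong convexity.} Intersecting with the event on which $\Phi_\nObs$ is $\beta$-strongly convex on $\mathcal{B}(\theta_0,\epsilon)$ (valid once $\nObs \geq C'(1+\log(1/\delta))$ by hypothesis), I would first show $\theta^\nObs\in\mathcal{B}(\theta_0,\epsilon)$. For any $\theta$ with $\|\theta-\theta_0\|=\epsilon$, strong convexity along the segment to $\theta_0$ gives $\Phi_\nObs(\theta) \geq \Phi_\nObs(\theta_0) - \|\nabla\Phi_\nObs(\theta_0)\|\epsilon + \tfrac{\beta}{2}\epsilon^2 > \Phi_\nObs(\theta_0)$ as soon as $\|\nabla\Phi_\nObs(\theta_0)\| < \tfrac{\beta}{2}\epsilon$, which holds once $\nObs$ exceeds a threshold of order $1+\log(1/\delta)$. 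Hence the minimizer of $\Phi_\nObs$ over the closed ball lies in its interior, and by global convexity of $\Phi_\nObs$ it is the global minimizer $\theta^\nObs$. First-order optimality then gives $\nabla\Phi_\nObs(\theta^\nObs)=0$ up to an $O(1/\nObs)$ residual from the $\tfrac{\regHyp}{4\nObs}\|\boldsymbol{\lambda}\|_2^2$ term (negligible at the target rate), so the monotonicity inequality from $\beta$-strong convexity yields $\beta\|\theta^\nObs-\theta_0\| \leq \|\nabla\Phi_\nObs(\theta_0) - \nabla\Phi_\nObs(\theta^\nObs)\| \leq \|\nabla\Phi_\nObs(\theta_0)\| + O(1/\nObs)$.

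\textbf{Conclusion.} Combining the two bounds gives $\|\theta^\nObs-\theta_0\|_2 \leq C\sqrt{(1+\log(1/\delta))/\nObs}$ on the good event, with $C$ absorbing $\beta^{-1}$, $C_1$ and the lower-order terms. When $\nObs$ is small enough that the right-hand side is $\geq 1$, or when no optimum exists, the truncated quantity $1\wedge\|\theta^\nObs-\theta_0\|_2$ is at most $1$ and the claim holds trivially, which is precisely the role of the $1\wedge(\cdot)$. I expect the \textbf{main obstacle} to be the localization step: the strong-convexity hypothesis is only local, so one must invoke the global convexity of $\Phi_\nObs$ together with the smallness of the gradient to guarantee that the (otherwise unconstrained) global minimizer actually lands inside the strongly convex neighborhood. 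A secondary technical point is the concentration of the product-measure quantity $\mathbb{E}_{\empTar}[g_j]$, which is not a plain i.i.d.~average and requires a bounded-differences argument rather than a direct Hoeffding bound.
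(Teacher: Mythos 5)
Your proposal is correct and follows essentially the same route as the paper's proof: McDiarmid-based concentration of $\nabla\Phi_{\nObs}$ at $(\mu^0,\boldsymbol{\lambda}^0)$ (with a bounded-differences argument for the product-measure term $\hat{\mathbf{a}}(\nObs)$), localization of the dual minimizer inside the strongly convex ball via a boundary comparison, and the gradient-monotonicity inequality from strong convexity combined with the first-order condition, whose $O(1/\nObs)$ residual comes from the regularization term. The only detail to tighten is that the boundary comparison must be run on the full regularized dual objective rather than on $\Phi_{\nObs}$ alone, since $(\mu^{\nObs},\boldsymbol{\lambda}^{\nObs})$ minimizes the former; the paper does exactly this by carrying the $O(1/\nObs)$ regularization difference through the comparison, and the same patch closes this small slip in your localization step.
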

\begin{proof}
Assume that $\Phi_{\nObs}$ is $\beta$-strongly convex on $\mathcal{B}((\mu^0, \boldsymbol{\lambda}^0),\epsilon)$ (which is true by assumption with high probability for $\nObs$ large enough). We have for any $(\mu, \boldsymbol{\lambda}) \in \mathcal{B}((\mu^0, \boldsymbol{\lambda}^0),\epsilon)$:
$$
\Phi_{\nObs}(\mu, \boldsymbol{\lambda}) \geq \Phi_{\nObs}(\mu^0, \boldsymbol{\lambda^0}) + \nabla \Phi_{\nObs}(\mu^0, \boldsymbol{\lambda}^0)^\top . ((\mu , \boldsymbol{\lambda}) - ( \mu^0 , \boldsymbol{\lambda}^0 )) + \frac{\beta}{2} \| (\mu , \boldsymbol{\lambda}) - ( \mu^0 , \boldsymbol{\lambda}^0 ) \|^2_2.
$$
In particular, notice that for any $(\mu, \boldsymbol{\lambda}) \in \partial \mathcal{B}((\mu^0, \boldsymbol{\lambda}^0),\epsilon)$ (that is on the boundary):
\begin{equation} \label{eq:stonrgConvBound}
 \Phi_{\nObs}(\mu, \boldsymbol{\lambda}) \geq \Phi_{\nObs}(\mu^0, \boldsymbol{\lambda^0}) - \| \nabla \Phi_{\nObs}(\mu^0, \boldsymbol{\lambda}^0) \|_2 \epsilon  + \frac{\beta}{2} \epsilon^2.  
\end{equation}
Note also that, by assumption, $\nabla \Phi_{\nObs}(\mu^0, \boldsymbol{\lambda}^0) = - \int \mathbf{g}(\z) \souTarDensity(\z) d\hat{\source}^\nObs_{\joinVar}(\z) + \hat{\mathbf{a}}(\nObs)$. By straightforward uses of McDiarmid's inequality (see Lemma \ref{lem:mcDia}) and some union bound, we have the existence of constants $C,C'>0$ such that with probability at least $1-\delta$:
\begin{align} \label{eq:summarizeBound}
    &\| \hat{\mathbf{a}}(\nObs) - \mathbf{a} \|_2  \leq C \sqrt{\frac{1 + \log(1/\delta)}{\nObs}}, \nonumber \\
    & \left \| \int \mathbf{g}(\z) \souTarDensity(\z) d\hat{\source}^\nObs_{\joinVar}(\z) - \mathbf{a} \right\|_2  \leq C' \sqrt{\frac{1 + \log(1/\delta)}{\nObs}}.
\end{align}
Combining \eqref{eq:stonrgConvBound} and \eqref{eq:summarizeBound} yields the fact that there exists a constant $C > 0$ such that with probability at least $1-\delta$, if $\nObs \geq C (1 + \log(1/\delta))$ then we have for any $(\mu, \boldsymbol{\lambda}) \in \partial \mathcal{B}((\mu^0, \boldsymbol{\lambda}^0),\epsilon)$:
$$
\Phi_{\nObs}(\mu, \boldsymbol{\lambda}) + \| \boldsymbol{\lambda}  \|_2 \approxDelta_\nObs > \Phi_{\nObs}(\mu^0, \boldsymbol{\lambda^0}) + \| \boldsymbol{\lambda^0}  \|_2 \approxDelta_\nObs,
$$
which implies not only the existence of $(\mu^\nObs, \boldsymbol{\lambda}^\nObs)$ but that it is actually in $\mathcal{B}((\mu^0, \boldsymbol{\lambda}^0),\epsilon)$, by convexity. For such $(\mu^\nObs, \boldsymbol{\lambda}^\nObs)$ using the strong convexity again, we have:
\begin{equation}\label{eq:otherStrongConv}
    (\nabla \Phi_{\nObs}(\mu^\nObs, \boldsymbol{\lambda}^\nObs) - \nabla \Phi_{\nObs}(\mu^0, \boldsymbol{\lambda}^0) )^\top . ((\mu^\nObs , \boldsymbol{\lambda}^\nObs) - ( \mu^0 , \boldsymbol{\lambda}^0 )) \geq  \beta \| (\mu^\nObs , \boldsymbol{\lambda}^\nObs) - ( \mu^0 , \boldsymbol{\lambda}^0 ) \|^2_2.
\end{equation}
We know that $ \nabla \Phi_{\nObs}(\mu^0, \boldsymbol{\lambda}^0)$ goes to $0$ at speed $O( \sqrt{\frac{1 + \log(1/\delta)}{\nObs}})$. What about $\nabla \Phi_{\nObs}(\mu^\nObs, \boldsymbol{\lambda}^\nObs)$ ? By first order condition we can see that $ - \nabla \Phi_{\nObs}(\mu^\nObs, \boldsymbol{\lambda}^\nObs) = \frac{\lambda}{2n}   \boldsymbol{\lambda}^\nObs $ which converges to $0$ at speed $O( 1/n)$. Hence, using \eqref{eq:otherStrongConv}, we have that there is a constant $C>0$ such that:
$$
\| (\mu^\nObs, \boldsymbol{\lambda}^\nObs) - (\mu^0, \boldsymbol{\lambda}^0) \|_2 \leq C \sqrt{\frac{1 + \log(1/\delta)}{\nObs}}.
$$
\end{proof}
We now show that the conditions from the previous lemma are respected under condition (b) of Theorem \ref{thm:doubleConsistency}.

\subsubsection*{Proof of Theorem \ref{thm:doubleConsistency} under Condition (b)} 

Call $(\mu^\nObs, \boldsymbol{\lambda}^\nObs)$ solutions of the dual problem of Lemma \ref{lem:duality} with $\nu = \hat{\source}^\nObs_{\joinVar}$ and $\mathbf{a} = \hat{\mathbf{a}}(\nObs)$, and let $B>0$ such that $\| (1,\mathbf{g}(\z)) \|_2 \leq B, \forall \z \in \txSupp$, which exists because these functions are continuous and $\txSupp$ is compact. Again by continuity, and the fact that $\frac{d\rho^*}{d x}$ is $L$-Lipschitz (as $\rho^*$ smooth) on finite intervals for some $L>0$, we have for $ (\mu^\nObs, \boldsymbol{\lambda}^\nObs)$ in a neighborhood of $(\mu^0, \boldsymbol{\lambda}^0)$ (using also  Cauchy-Schwarz inequality): 
$$
\left| \nObs \w_i(\nObs) - \souTarDensity(\joinVar_i)\right| = \left| \frac{d\rho^*}{d x}\left(- \mu^\nObs - \sum_{k=1}^{K} \lambda_k^\nObs g_k(\joinVar_i)\right) - \frac{d\rho^*}{d x}\left(- \mu^0 - \sum_{k=1}^{K} \lambda_k^0 g_k(\joinVar_i)\right)\right| \leq B L \| (\mu^\nObs, \boldsymbol{\lambda}^\nObs) - (\mu^0, \boldsymbol{\lambda}^0) \|_2.
$$
This shows that the uniform convergence of the $\nObs \w_i(\nObs)$s toward $\souTarDensity(\joinVar_i)$ would therefore be a direct consequence of Lemma \ref{lem:convLambda}, once we have showed that its assumptions are verified. Actually, we just have to prove that $\Phi_n$ is $\beta$-strongly convex with probability at least $1-\delta$ when $\nObs \geq C (1 + \log(1/\delta))$ for some constant $C>0$. For any $\epsilon>0$, let:
$$
\Psi_\epsilon(\mu, \boldsymbol{\lambda}) = \int \rho^*\left(- \mu - \sum_{k=1}^{K} \lambda_k g_k (\z) \right) \mathbbm{1}_{\z \in A_\epsilon}d\joinDistr(\z), \quad \text{where } A_\epsilon = \{ \z \in \txSupp: - \mu^0 - \sum_{k=1}^{K} \lambda^0_k g_k (\z) \geq \epsilon + \frac{d\rho}{d x}(0)\}.
$$
Because the integral is over a subset of $\txSupp$ where $\frac{d\rho}{d x}(0) < \frac{d\rho}{d x}(0) + \epsilon \leq - \mu^0 - \sum_{k=1}^{K} \lambda^0_k g_k (\z) \leq \souTarDenBound < \clip$, for $(\mu, \boldsymbol{\lambda})$ in a neighborhood of $(\mu^0, \boldsymbol{\lambda}^0)$ we have, by assumption under condition (b), that $\rho^*$ is $M$-strongly convex for some $M>0$ and on some finite interval that includes all values of the functions $-\mu - \boldsymbol{\lambda}^T \mathbf{g}$ and $-\mu^0 - (\boldsymbol{\lambda}^0)^T \mathbf{g}$. Therefore, on this neighborhood:
$$
\nabla^2 \Psi_\epsilon(\mu, \boldsymbol{\lambda}) \succeq M . H_\epsilon , \quad \text{with } H_\epsilon = \left( \int g_k(\z) g_l(\z) \mathbbm{1}_{\z \in A_\epsilon} d \joinDistr(\z) \right)_{k,l \in \{0, \ldots, K \}},
$$
where by convention we set $g_0 = 1$.

We claim that $H_\epsilon$'s lowest eigenvalue is above $8 \beta$ for some $\beta>0$ and $\epsilon$ small enough. Let's prove this fact. Consider the limit of $H_\epsilon$ when $\epsilon \rightarrow 0$, which actually is:
$$
H_0 = \left( \int g_k(\z) g_l(\z) \mathbbm{1}_{\z \in A_0} d \joinDistr(\z) \right)_{k,l \in \{0, \ldots, K \}}, \quad \text{where } A_0 = \{ \z \in \txSupp : \souTarDensity(\z) > 0 \}.
$$
Notice that $A_0$ is dense in $\txSupp$. Indeed, by contradiction assume there exists $\z \in \txSupp$ and $\eta > 0$ s.t.~$\txTarget(\mathcal{B}(\z,\eta))=0$ and $\joinDistr(\mathcal{B}(\z,\eta)) > 0$ (recall $\txSupp$ is the support of $\joinDistr$), which is impossible as by definition $\txTarget = \tDistr \otimes \xDistr$. As $(1,g_1, \ldots, g_K)$ is an independent family of continuous functions on $\txSupp$ there cannot be $(\mu, \boldsymbol{\lambda}) \neq 0$ such that $(\mu, \boldsymbol{\lambda})^\top H_0 (\mu, \boldsymbol{\lambda}) = 0$. This means $H_0$ is positive definite and, as $H_\epsilon \rightarrow_{\epsilon \rightarrow 0} H_0$, so will be $H_\epsilon$ for $\epsilon$ small enough. Hence, $H_\epsilon$'s lowest eigenvalue is above $8 \beta$ for some $\beta>0$ and $\epsilon$ small enough.

Call $\Psi_\epsilon^\nObs(\mu, \boldsymbol{\lambda})$  the empirical counterpart of $\Psi_\epsilon(\mu, \boldsymbol{\lambda})$. For the same reasons we have (on a neighborhood of $(\mu^0, \boldsymbol{\lambda}^0)$): $\nabla^2 \Psi_\epsilon^\nObs(\mu, \boldsymbol{\lambda}) \succeq M.  H^\nObs_\epsilon $, where $H^\nObs_\epsilon = \left( \int g_k(\z) g_l(\z) \mathbbm{1}_{\z \in A_\epsilon} d\hat{\source}^\nObs_{\joinVar}(\z) \right)_{k,l \in \{0, \ldots, K \}}$. By McDiarmid's Inequality and using an union bound we can see that with high probability $\| H_\epsilon^\nObs - H_\epsilon\|_2 \leq C \sqrt{\frac{1 + \log(1/\delta)}{\nObs}}$ and in particular $\Psi_\epsilon^\nObs$ will be $\beta$-strongly convex for $\nObs \geq C' (1 + \log(1/\delta))$ with probability at least $1-\delta$. As, on a fixed neighborhood of $(\mu^0, \boldsymbol{\lambda}^0)$ we have:
$$
\Psi_\epsilon^\nObs(\mu, \boldsymbol{\lambda}) = \int \rho^*\left(- \mu - \sum_{k=1}^{K} \lambda_k g_k (\z) \right) \mathbbm{1}_{\z \in A_\epsilon} d\hat{\source}^\nObs_{\joinVar}(\z),
$$
the conditions of Lemma \ref{lem:convLambda} are verified and from our previous steps, we have that there exists a constant $C>0$ s.t.~$\forall \nObs > 0, \, \delta \in (0,1]$, with probability at least $1-\delta$:
\[
\forall i, \quad \left| \nObs \w_i(\nObs) - \souTarDensity(\joinVar_i) \right| \leq C \sqrt{\frac{1 + \log(1/\delta)}{\nObs}}.
\]

Actually the convergence under $(b)$ could be derived by following some very common steps that can be found in leaning theory books [see \cite{shalev2014understanding}, Chapter 26], therefore we should only briefly mention, and roughly, what are these steps. First notice that, because $\regClass$ is bounded and $\ySupp$ compact,  we have with probability least $1-\delta$:
\begin{equation} \label{eq:thm5bound}
  \exists C>0, \quad \forall g \in \regClass, \quad \left|\sum_{i=1}^{\nObs} \w_i (\out_i - \regFct(\treat_i))^2 - \frac{1}{\nObs} \sum_{i=1}^{\nObs} \souTarDensity(\joinVar_i) (\out_i - \regFct(\treat_i))^2 \right| \leq C \sqrt{\frac{1+\log(1/\delta)}{\nObs}}.  
\end{equation}
Therefore we might just consider the empirical risk weighted by $\souTarDensity$ instead, which can be rewritten as an empirical risk under another loss function. More precisely:
$$
\frac{1}{\nObs} \sum_{i=1}^{\nObs} \souTarDensity(\joinVar_i) (\out_i - \regFct(\treat_i))^2 = \frac{1}{\nObs} \sum_{i=1}^{\nObs} l(g, \joinVar_i, \out_i),
$$
where $l(g, \joinVar_i, \out_i) =  \souTarDensity(\joinVar_i) (\out_i - \regFct(\treat_i))^2$ is a bounded loss function (recall also Assumption \ref{ass:overlap}). By Theorem 26.5 from \cite{shalev2014understanding} (and also contraction lemma, Lemma 26.9 from the same reference), we have a bound of the following form, with probability at least $1-\delta$, for all $\regFct \in \regClass$:
$$
\mathbb{E}_{\source}[l(g,\joinVar,\out)] - \frac{1}{\nObs} \sum_{i=1}^{\nObs} l(g, \joinVar_i, \out_i) \leq C' \radComp + C'' \sqrt{\frac{1+\log(1/\delta)}{\nObs}},
$$
where $C', C''>0$ are two constants. Notice that actually that $\mathbb{E}_{\source}[l(g,\joinVar,\out)] = \mathbb{E}_{\target}[(\out - \regFct(\treat))^2] = \causalRisk(g) + c$ ($c$ being a constant independent of $g$) ! Combining with \eqref{eq:thm5bound} yields the convergence of $\causalRisk(\regFctMin)$ to $\inf_{\regFct \in \regClass} \causalRisk(\regFct)$ (see equation (26.10) from \cite{shalev2014understanding}).

\section{Proof Of Theorem \ref{thm:distrConv}}
In the following we will denote $\joinVar = (\treat, \cov)$. First let's recall McDiarmid's inequality (e.g.~\cite{shalev2014understanding}, Lemma 26.4), generalization of Hoeffding's inequality:
\begin{lemma}[McDiarmid's Inequality] \label{lem:mcDia}
Let $\joinVar_{1}, \ldots, \joinVar_{\nObs}$ independent random variables, $c_{i}>0$ for $i \in \{1, \dots, \nObs \}$ and let $\fct$ a function such that for any $\z_{i}, \z'_{i} \in \txSupp$ with $i$ in $\{1, \dots, \nObs \}$ we have for all $i$
$
\quad \left| \fct(\z_1, \ldots, \z_\nObs) - \fct(\z_1, \ldots, \z_{i-1}, \z'_{i},\z_{i+1}, \ldots, \z_\nObs) \right| \leq c_i.
$
Then,$$
\mathbb{P}\left( \left| f(\joinVar_{1}, \ldots, \joinVar_{\nObs}) - \mathbb{E}[f(\joinVar_{1}, \ldots, \joinVar_{\nObs})]\right| \geq \epsilon \right) \leq 2 \exp\left( \frac{-2 \epsilon^2}{\sum_{i=1}^{\nObs} c_{i}^2} \right).
$$
\end{lemma}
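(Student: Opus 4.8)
The plan is to prove McDiarmid's inequality by the classical \emph{Doob martingale} argument combined with Hoeffding's lemma. This mirrors the martingale technique already employed in the proof of Theorem~\ref{thm:causalBound}, where a Doob martingale was paired with the Bernstein-type bound of \cite{pinelis1994optimum}; here, since the hypothesis controls only the \emph{range} of the increments and not their conditional variance, the sub-Gaussian Hoeffding estimate is the appropriate tool in place of Pinelis's theorem. Concretely, I would set $\mathcal{F}_i = \sigma(\joinVar_1, \ldots, \joinVar_i)$ for $i = 0, \ldots, \nObs$ and define the Doob martingale $M_i = \mathbb{E}[\fct(\joinVar_1, \ldots, \joinVar_\nObs) \mid \mathcal{F}_i]$, so that $M_0 = \mathbb{E}[\fct(\joinVar_1, \ldots, \joinVar_\nObs)]$ and $M_\nObs = \fct(\joinVar_1, \ldots, \joinVar_\nObs)$. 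Writing $D_i = M_i - M_{i-1}$, the deviation to be controlled is $M_\nObs - M_0 = \sum_{i=1}^\nObs D_i$.

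The crucial and most delicate step is to bound the conditional range of each increment $D_i$. Using the independence of the $\joinVar_j$, I would write $M_i = \phi_i(\joinVar_1, \ldots, \joinVar_i)$, where $\phi_i(\z_1, \ldots, \z_i) = \mathbb{E}[\fct(\z_1, \ldots, \z_i, \joinVar_{i+1}, \ldots, \joinVar_\nObs)]$ is obtained by averaging out the future coordinates. The bounded-differences hypothesis transfers from $\fct$ to $\phi_i$: for any $\z, \z' \in \txSupp$, one has $|\phi_i(\ldots, \z) - \phi_i(\ldots, \z')| \leq c_i$, because the pointwise bound $c_i$ holds before integration and is preserved under the expectation over the remaining independent variables. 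Consequently, conditionally on $\mathcal{F}_{i-1}$, the centered variable $D_i = \phi_i(\joinVar_1, \ldots, \joinVar_i) - \mathbb{E}[\phi_i(\joinVar_1, \ldots, \joinVar_i) \mid \mathcal{F}_{i-1}]$ takes values in an interval of length at most $c_i$. This is the main obstacle: one must verify carefully that the pointwise bounded-differences condition genuinely survives the partial integration defining $\phi_i$, and that independence is precisely what licenses expressing $M_i$ as a function of only the first $i$ coordinates.

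Once the increments are confined to intervals of width $c_i$, the remainder is routine. I would apply the conditional form of Hoeffding's lemma — any $\mathcal{F}_{i-1}$-centered variable supported on an interval of length $\ell$ satisfies $\mathbb{E}[e^{s D_i} \mid \mathcal{F}_{i-1}] \leq \exp(s^2 \ell^2 / 8)$ — to obtain $\mathbb{E}[e^{s D_i} \mid \mathcal{F}_{i-1}] \leq \exp(s^2 c_i^2 / 8)$ for every $s \in \mathbb{R}$. Peeling off the increments one at a time via the tower property gives the telescoped bound $\mathbb{E}[e^{s(M_\nObs - M_0)}] \leq \exp(s^2 \sum_{i=1}^\nObs c_i^2 / 8)$. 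A Chernoff bound $\mathbb{P}(M_\nObs - M_0 \geq \epsilon) \leq e^{-s\epsilon}\, \mathbb{E}[e^{s(M_\nObs - M_0)}]$, optimized at $s = 4\epsilon / \sum_{i=1}^\nObs c_i^2$, yields the one-sided estimate $\exp(-2\epsilon^2 / \sum_{i=1}^\nObs c_i^2)$. Applying the identical argument to $-\fct$ controls the lower tail, and a union bound over the two tails produces the stated two-sided inequality with its factor of $2$.
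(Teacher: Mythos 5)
Your proof is correct. The paper itself does not prove this lemma --- it is recalled as a classical result with a citation to \cite{shalev2014understanding} (Lemma 26.4) --- and your Doob-martingale argument is exactly the standard proof found there: the transfer of the bounded-differences hypothesis through the partial integration defining $\phi_i$, the confinement of each increment $D_i$ to a conditional interval of length at most $c_i$, the conditional Hoeffding lemma with telescoping via the tower property, the Chernoff optimization at $s = 4\epsilon/\sum_{i=1}^{\nObs} c_i^2$ yielding $\exp(-2\epsilon^2/\sum_{i=1}^{\nObs} c_i^2)$, and the union bound over the two tails giving the factor of $2$ are all handled correctly.
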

We will need three lemmas to prove Theorem \ref{thm:distrConv}. The first states that if we knew the density ratio $\souTarDensity$ and use it as importance weights the resulting re-weighted empirical distribution will indeed converges to $\txTarget$ w.r.t.~the considered \ipm. And the same goes for $\empTar^\nObs$.
\begin{lemma} \label{lem:firstIpmConv}
Let $\ipm \in \{ \wass, \mmd_\kernel\}$ where $\kernel$ is a bounded kernel on $\txSupp \subset \mathbb{R}^{\joindim}$ compact. Consider $\txTarget = \tDistr \otimes \xDistr$ and $\joinVar_1, \joinVar_2, \dots$ an i.i.d.~sequence from $\joinDistr$. Assume Assumption \ref{ass:overlap} and let $\phi = \souTarDensity \leq \souTarDenBound$. Define $\hat{\source}^{\nObs}_{\phi} = \sum_{i = 1}^{\nObs} \phi(\joinVar_i) \dirac{\joinVar_i} / \sum_{i = 1}^{\nObs} \phi(\joinVar_i)$ and $\empTar^{\nObs}$ based on the $\nObs$ first samples $(\joinVar_{i})_{i = 1}^{\nObs}$. Then, for $\hat{\nu}_{\nObs} \in \{ \hat{\source}^{\nObs}_{\phi}, \empTar^{\nObs}\}$, we have:
$$
\ipm(\hat{\nu}_{\nObs}, \txTarget) \xrightarrow[\nObs \rightarrow + \infty]{a.s.} 0.
$$
\end{lemma}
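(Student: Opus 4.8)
The plan is to reduce both assertions to a single almost-sure weak-convergence statement, $\hat{\nu}_{\nObs} \xrightarrow{d} \txTarget$, and then to exploit that on the compact set $\txSupp$ both $\wass$ and $\mmd_\kernel$ are compatible with the topology of weak convergence. For the Wasserstein distance this closes the argument immediately: since $\txSupp$ is compact (hence bounded), $\wass$ metrizes weak convergence \cite{villani2003topics}, so $\hat{\nu}_{\nObs}\xrightarrow{d}\txTarget$ forces $\wass(\hat{\nu}_{\nObs},\txTarget)\to 0$ on the same almost-sure event. The remaining work is therefore to establish the weak convergence and then to treat the $\mmd_\kernel$ case.

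First I would handle $\hat{\source}^{\nObs}_{\phi}$. Writing $d\txTarget = \phi\, d\joinDistr$ (Assumption \ref{ass:overlap}\ref{ass:weakOverlap}, with density $\phi=\souTarDensity\le\souTarDenBound$ by \ref{ass:strongOverlap}), observe that for any bounded continuous $f$,
$$
\int f\, d\hat{\source}^{\nObs}_{\phi} = \frac{\tfrac1\nObs\sum_{i}\phi(\joinVar_i)f(\joinVar_i)}{\tfrac1\nObs\sum_{i}\phi(\joinVar_i)} \xrightarrow[\nObs\to\infty]{a.s.} \frac{\mathbb{E}_{\joinDistr}[\phi f]}{\mathbb{E}_{\joinDistr}[\phi]} = \int f\, d\txTarget ,
$$
by the classical strong law of large numbers, using that $\phi$ and $\phi f$ are bounded and that $\mathbb{E}_{\joinDistr}[\phi]=\txTarget(\txSupp)=1$. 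Since $\txSupp$ is compact, $C(\txSupp)$ is separable; applying the display above simultaneously to every $f$ in a fixed countable dense subset and intersecting the corresponding almost-sure events gives (together with the uniform unit mass) convergence of $\int f\,d\hat{\source}^{\nObs}_{\phi}$ for all $f\in C(\txSupp)$, i.e.\ $\hat{\source}^{\nObs}_{\phi}\xrightarrow{d}\txTarget$ almost surely. For $\empTar^{\nObs}=\hat{\source}_{\treat}\otimes\hat{\source}_{\cov}$ I would instead invoke Glivenko--Cantelli to get $\hat{\source}_{\treat}\xrightarrow{d}\tDistr$ and $\hat{\source}_{\cov}\xrightarrow{d}\xDistr$ almost surely, and then use that weak convergence of the marginals implies weak convergence of the product, so $\empTar^{\nObs}\xrightarrow{d}\tDistr\otimes\xDistr=\txTarget$ almost surely.

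It remains to pass from weak convergence to $\mmd_\kernel$ convergence. By the embedding identity \eqref{eq:rewriteMMD}, $\mmd^2_\kernel(\hat{\nu}_{\nObs},\txTarget)=\|\mathbb{E}_{\hat{\nu}_{\nObs}}[\Phi]-\mathbb{E}_{\txTarget}[\Phi]\|_{\rkhs}^2$, so it suffices to show $\mathbb{E}_{\hat{\nu}_{\nObs}}[\Phi]\to\mathbb{E}_{\txTarget}[\Phi]$ in $\rkhs$. For $\hat{\source}^{\nObs}_{\phi}$ this is a strong law of large numbers in the Hilbert space $\rkhs$ applied to the bounded random element $\phi(\joinVar)\Phi(\joinVar)$ (bounded because $\phi\le\souTarDenBound$ and $\|\Phi(\z)\|_{\rkhs}^2=\kernel(\z,\z)$ is bounded), divided by the scalar average $\tfrac1\nObs\sum_i\phi(\joinVar_i)\to 1$. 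For $\empTar^{\nObs}$, the quantity $\mathbb{E}_{\empTar^{\nObs}}[\Phi]=\nObs^{-2}\sum_{i,j}\Phi(\treat_i,\cov_j)$ is a Hilbert-valued V-statistic; since $\Phi$ is bounded, changing a single $\joinVar_k$ alters it by $O(1/\nObs)$ in $\rkhs$-norm, so Lemma \ref{lem:mcDia} applied to $\|\mathbb{E}_{\empTar^{\nObs}}[\Phi]-\mathbb{E}_{\txTarget}[\Phi]\|_{\rkhs}$ gives exponential concentration around its expectation, which itself vanishes because both the diagonal bias and the variance of the V-statistic are $O(1/\nObs)$; the resulting tail probabilities are summable in $\nObs$, so Borel--Cantelli upgrades this to almost-sure convergence. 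The main obstacle is precisely this $\empTar^{\nObs}$ case, since $\empTar^{\nObs}$ is a \emph{product} of empirical marginals rather than a plain i.i.d.\ average and so requires either the product-weak-convergence argument or the V-statistic concentration above; a secondary point of care is that when $\kernel$ is only bounded (not continuous) one must deduce $\mmd$ convergence through the Hilbert-space route rather than from weak convergence, the latter implication needing continuity of $\kernel$ on the compact $\txSupp\times\txSupp$.
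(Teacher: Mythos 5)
Your proposal is correct, and most of it mirrors the paper's own proof: the Wasserstein case is reduced to almost-sure weak convergence via compactness of $\txSupp$ (Villani, Theorem 7.12), the self-normalized measure $\hat{\source}^{\nObs}_{\phi}$ is handled by the scalar strong law applied to numerator and denominator, and the $\mmd_\kernel$ case is handled through the kernel mean embedding with McDiarmid-type concentration plus Borel--Cantelli. Where you genuinely depart from the paper is the treatment of $\empTar^{\nObs}$ in the weak-convergence step: the paper fixes a continuous $g$, applies McDiarmid's inequality to the two-sample V-statistic $\nObs^{-2}\sum_{i,j} g(\treat_i,\cov_j)$ (bounded differences of order $1/\nObs$), computes its expectation, invokes Borel--Cantelli, and then needs a separate separability argument (Remark \ref{rem:countability}) to pass from ``for each $g$, almost surely'' to ``almost surely, for all $g$''; you instead cite almost-sure weak convergence of each empirical marginal (Varadarajan/Glivenko--Cantelli) together with the standard fact that weak convergence of the marginals implies weak convergence of the product measure. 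Your route is more elementary and absorbs both the concentration computation and the countability subtlety into cited classical results; the paper's route is self-contained and yields explicit exponential tails, a computation pattern it reuses later (e.g.\ in Lemma \ref{lem:optimObjConv} and the proof of Theorem \ref{thm:distrConv}). Your $\mmd$ argument is equivalent in content to the paper's expansion \eqref{eq:MMDextension} into three kernel expectations: applying McDiarmid to the $\rkhs$-norm of the Hilbert-valued V-statistic is the same computation as applying it to the expanded real-valued terms, and your bias/variance accounting for $\mathbb{E}_{\empTar^{\nObs}}[\Phi]$ is accurate. One small caveat: your appeal to a Hilbert-space strong law for $\phi(\joinVar)\Phi(\joinVar)$ requires $\Phi$ to be strongly (essentially separably valued) measurable, which is automatic for continuous kernels but can fail for merely bounded measurable ones; this is easily sidestepped, as the paper implicitly does, by expanding the squared $\rkhs$-norm into real-valued kernel averages and applying the scalar law of large numbers or McDiarmid there.
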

\begin{proof}
Let's start with $\ipm = \wass$. $\txSupp$ is compact, therefore convergence w.r.t.~$\wass$ is equivalent to convergence in distribution (see \cite{villani2003topics}, Theorem 7.12). Let $g$ continuous on $\txSupp$, thus also $\| g \|_{\infty} \leq B/2$ for some $B \geq 0$. Applying (twice) the law of large numbers it is easy to see that:
\begin{equation}\label{eq:LLN}
\mathbb{E}_{\hat{\source}^{\nObs}_{\phi}}[g(\joinVar)] = \sum_{i = 1}^{\nObs} \phi(\joinVar_i) g(\joinVar_i) / \sum_{i = 1}^{\nObs} \phi(\joinVar_i) \xrightarrow[\nObs \rightarrow + \infty]{a.s.} \mathbb{E}_{\txTarget}[g(\joinVar)].
\end{equation}
Also, let $\fct(\joinVar_1, \ldots, \joinVar_\nObs) = \mathbb{E}_{\empTar^\nObs}[g(\joinVar)] = \frac{1}{\nObs^2} \sum_{i,j = 1}^\nObs g(\treat_{i}, \cov_{j})$. We are going to apply McDiarmid's inequality to $\fct$. As in Lemma \ref{lem:mcDia}, let $(\z_i)_{i = 1}^{\nObs} = (\tr_{i}, \x_{i})_{i = 1}^{\nObs}$ and $\z'_{k} = (\tr'_{k}, \x'_{k})$ for some $k$:
\begin{align*}
    \left| \fct(\z_1, \ldots, \z_{\nObs}) - \fct(\z_1, \ldots, \z'_k, \ldots, \z_{\nObs})\right| = & \left| \frac{1}{\nObs^2} \sum_{i,j \neq k} g(\tr_{i}, \x_{j}) + \frac{1}{\nObs^2} \sum_{i \neq k}^{\nObs} g(\tr_{i}, \x_{k})+ \frac{1}{\nObs^2} \sum_{j \neq k} g(\tr_{k}, \x_{j}) + \frac{g(\tr_{k}, \x_{k})}{\nObs^2} \right. \\
    & \left. - \frac{1}{\nObs^2} \sum_{i,j \neq k} g(\tr_{i}, \x_{j}) - \frac{1}{\nObs^2} \sum_{i \neq k}^{\nObs} g(\tr_{i}, \x'_{k})- \frac{1}{\nObs^2} \sum_{j \neq k} g(\tr'_{k}, \x_{j}) - \frac{g(\tr'_{k}, \x'_{k})}{\nObs^2} \right| \\
    \leq & B (2 \nObs - 1)/\nObs^2.
\end{align*}
Hence, $\fct$ satisfies the condition of McDiarmid's inequality with $c_i = B (2 \nObs - 1)/\nObs^2$. We thus have:
$$
\forall \epsilon > 0, \quad \mathbb{P}\left( \left| f(\joinVar_{1}, \ldots, \joinVar_{\nObs}) - \mathbb{E}[f(\joinVar_{1}, \ldots, \joinVar_{\nObs})]\right| \geq \epsilon \right) \leq 2 \exp\left( -\frac{2 \epsilon^2}{B^2} \left( \frac{2\nObs - 1}{\nObs} \right)^2 \nObs \right).
$$
The RHS being summable w.r.t.~$\nObs$, Borel-Cantelli's lemma (e.g. Proposition 2.6 from \cite{ccinlar2011probability}) (note we will use this lemma several times) implies the almost sure convergence of $\left| f(\joinVar_{1}, \ldots, \joinVar_{\nObs}) - \mathbb{E}[f(\joinVar_{1}, \ldots, \joinVar_{\nObs})]\right|$ toward 0. Furthermore,
$$
\mathbb{E}[f(\joinVar_{1}, \ldots, \joinVar_{\nObs})] = \frac{\nObs(\nObs - 1)}{\nObs^2} \mathbb{E}_{\txTarget}[g(\joinVar)] + \frac{1}{\nObs}\mathbb{E}_{\joinDistr}[g(\joinVar)] \xrightarrow[\nObs \rightarrow + \infty]{}  \mathbb{E}_{\txTarget}[g(\joinVar)].
$$
Hence: 
\begin{equation} \label{eq:mcdConv}
    \mathbb{E}_{\empTar^\nObs}[g(\joinVar)] \xrightarrow[\nObs \rightarrow + \infty]{a.s.} \mathbb{E}_{\txTarget}[g(\joinVar)].
\end{equation}
Combining \eqref{eq:LLN} and \eqref{eq:mcdConv}, as $g$ was chosen arbitrarily, for $\hat{\nu}_{\nObs} \in \{ \hat{\source}^{\nObs}_{\phi}, \empTar^{\nObs}\}$ we have that almost surely $\hat{\nu}_{\nObs}$ will converge in distribution to $\txTarget$ (see Remark \ref{rem:countability}). Therefore, $\wass(\hat{\nu}_{\nObs}, \txTarget) \xrightarrow[\nObs \rightarrow + \infty]{a.s.} 0.$

Now let's turn to $\ipm = \mmd_{\kernel}$. Let $B>0$ such that $\kernel \leq B/2$ on $\txSupp$, as we assumed it's bounded. It is well-known that $\forall \mu, \nu$ probability measures we have (e.g.~see \cite{gretton2007kernel, gretton2012kernel,ramdas2017wasserstein}):
\begin{equation}\label{eq:MMDextension}
    \mmd^2_{\kernel}(\mu, \nu) = \mathbb{E}_{\mu \otimes \mu}[\kernel(\joinVar,\joinVar')] - 2 \mathbb{E}_{\mu \otimes \nu}[\kernel(\joinVar,\joinVar')] + \mathbb{E}_{\nu \otimes \nu}[\kernel(\joinVar,\joinVar')],
\end{equation}
and in particular :
$$
\mmd^2_{\kernel}(\empTar^{\nObs}, \txTarget) = \mathbb{E}_{\empTar^{\nObs} \otimes \empTar^{\nObs}}[\kernel(\joinVar,\joinVar')] - 2 \mathbb{E}_{\empTar^{\nObs} \otimes \txTarget}[\kernel(\joinVar,\joinVar')] + \mathbb{E}_{\txTarget \otimes \txTarget}[\kernel(\joinVar,\joinVar')].
$$
We only need to study the convergence of the first two terms, let's focus on the first term only as the proof for the other is essentially the same. We will also omit the proof for $\mmd_{\kernel}(\hat{\source}^{\nObs}_{\phi}, \txTarget) \xrightarrow[]{a.s.} 0$ as, again, this is also a mere application of McDiarmid's inequality (recall $\phi \leq \souTarDenBound$). Over all the terms we need to study the convergence, $\mathbb{E}_{\empTar^{\nObs} \otimes \empTar^{\nObs}}[\kernel(\joinVar,\joinVar')]$ is probably the most difficult, so if we had to provide the proof for at least one of them, it's normal we should look at this one in priority. 

Let $\fct(\joinVar_1, \ldots, \joinVar_\nObs) = \mathbb{E}_{\empTar^{\nObs} \otimes \empTar^{\nObs}}[\kernel(\joinVar,\joinVar')] = \frac{1}{\nObs^2} \sum_{i,j=1}^{\nObs} \frac{1}{\nObs^2} \sum_{k,l=1}^{\nObs} \kernel((\treat_i,\cov_j),(\treat_k,\cov_l))$. As in Lemma \ref{lem:mcDia}, let $(\z_i)_{i = 1}^{\nObs} = (\tr_{i}, \x_{i})_{i = 1}^{\nObs}$ and $\z'_{m} = (\tr'_{m}, \x'_{m})$ for some $m$:
$$
\left|  \fct(\z_1, \ldots, \z_{\nObs}) - \fct(\z_1, \ldots, \z'_m, \ldots, \z_{\nObs}) \right| \leq \frac{1}{\nObs^2} \sum_{i,j \, s.t. \, i=m \, or \, j=m} \frac{1}{\nObs^2} \sum_{k,l \, s.t. \, k=m \, or \, l=m} B \leq \left( \frac{2\nObs - 1}{\nObs^2}\right)^2 B.
$$
Also, 
$$\mathbb{E}[\fct(\joinVar_1, \ldots, \joinVar_\nObs)] = \frac{1}{\nObs^2} \sum_{i \neq j}^\nObs \frac{1}{\nObs^2} \sum_{i \neq j, \, k,l \notin \{ i,j\}} \mathbb{E}[\kernel((\treat_i, \cov_j),(\treat_k, \cov_l))] + O(1/\nObs) \sim_\nObs \mathbb{E}_{\txTarget \otimes \txTarget}[\kernel(\joinVar,\joinVar')].$$

Applying McDiarmid's inequality and Borel-Cantelli's lemma we have that: $$\mathbb{E}_{\empTar^{\nObs} \otimes \empTar^{\nObs}}[\kernel(\joinVar,\joinVar')] \xrightarrow[\nObs \rightarrow + \infty]{a.s.} \mathbb{E}_{\txTarget \otimes \txTarget}[\kernel(\joinVar,\joinVar')].$$
Also, as mentioned before we can prove the same convergence for $\mathbb{E}_{\empTar^{\nObs} \otimes \txTarget}[\kernel(\joinVar,\joinVar')]$. This concludes our proof.
\end{proof}

\begin{remark}\label{rem:countability}
We have claimed that a series of probability measures $\hat{\nu}_{\nObs}$ built on $(\joinVar_{i})_{i = 1}^{\nObs}$ almost surely (w.r.t.~the sequence $(\joinVar_{i})_{i \geq 1}$) converges in distribution to $\txTarget$ by showing that for any fixed continuous function $g$ then, almost surely, $\mathbb{E}_{\hat{\nu}_{\nObs}}[g(\joinVar)] \xrightarrow[\nObs \rightarrow + \infty]{a.s.} \mathbb{E}_{\txTarget}[g(\joinVar)]$. This may look like it's not enough as the almost sure convergence in distribution means that, almost surely, for \emph{all} continuous function $g$ we have the convergence of the expectation, especially because the space of all continuous functions on $\txSupp$ is in general not countable. As a matter of fact, the two statements are equivalent in our setting, and this is because $C(\txSupp, \|.\|_{\infty})$, $\txSupp$ being compact, is separable [\cite{dudley2018real}]. Indeed, it means that $C(\txSupp, \|.\|_{\infty})$ admits a countable basis that can approximate any of its functions, and it is easy to see that our proof actually implies that almost surely the convergence in expectation will hold for all of these basis functions, which is enough for the convergence in distribution. We will use this fact (implicitly) again in the following proofs.
\end{remark}

Because, almost surely, the weights based on $\souTarDensity$ become feasible for $\nObs$ large enough, Lemma \ref{lem:firstIpmConv} implies that the objective function from the \cbdm~minimization program of Definition \ref{def:cbdmIPM} converges toward zero almost surely.

\begin{lemma} \label{lem:optimObjConv}
Consider the same notations and assumptions as from Lemma \ref{lem:firstIpmConv}. Assume furthermore that $\clip > \souTarDenBound$. Then for $\ipm \in \{ \wass, \mmd_{\kernel} \}$:
$$
\min_{\wVect \in \simplex; \,\, \w_i \leq \clip / \nObs, \, \forall i} \Phi_{\nObs} (\wVect) = \ipm^2(\wDistr, \empTar^\nObs) + \regHyp \| \wVect \|_2^2  \xrightarrow[\nObs \rightarrow + \infty]{a.s.} 0. 
$$
\end{lemma}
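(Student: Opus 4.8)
The plan is to bound the minimum from below by zero and from above by the objective value at a single conveniently chosen feasible sequence, then squeeze. Since $\ipm^2(\cdot,\cdot) \geq 0$ and $\regHyp \|\wVect\|_2^2 \geq 0$, the objective $\Phi_{\nObs}$ is nonnegative on the entire feasible set, so $\min \Phi_{\nObs} \geq 0$ for every $\nObs$. It therefore suffices to exhibit one sequence of feasible weight vectors whose objective values tend to zero almost surely; the minimum, lying below them, is then forced to zero. The natural competitor is the self-normalized density-ratio reweighting already analyzed in Lemma \ref{lem:firstIpmConv}.

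\textbf{A feasible competitor.} Writing $\phi = \souTarDensity$ (so $\phi \leq \souTarDenBound$ by the bounded-density part \ref{ass:strongOverlap} of Assumption \ref{ass:overlap}), I would take $\w_i^\phi = \phi(\joinVar_i)/\sum_{j=1}^{\nObs} \phi(\joinVar_j)$, so that $\hat{\source}^{\wVect^\phi}_{\treat,\cov}$ coincides exactly with the reweighted empirical measure $\hat{\source}^{\nObs}_{\phi}$ of Lemma \ref{lem:firstIpmConv}. These weights are nonnegative and sum to one, hence $\wVect^\phi \in \simplex$; the only remaining constraint is the clipping bound $\w_i^\phi \leq \clip/\nObs$. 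Here I would write $\nObs \w_i^\phi = \phi(\joinVar_i)\, c_{\nObs}$ with $c_{\nObs} = \nObs/\sum_j \phi(\joinVar_j)$, and note that by the strong law of large numbers $c_{\nObs} \to 1/\mathbb{E}_{\joinDistr}[\phi] = 1$ almost surely, the expectation being one because absolute continuity $\txTarget \ll \joinDistr$ makes $\phi$ a genuine density. Thus $\nObs \w_i^\phi \leq \souTarDenBound\, c_{\nObs}$, and since $\clip > \souTarDenBound$ and $c_{\nObs} \to 1$, almost surely there is a (random) index $N$ beyond which $\souTarDenBound\, c_{\nObs} \leq \clip$, i.e.\ $\wVect^\phi$ is feasible for all $\nObs \geq N$.

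\textbf{Vanishing of the competitor's objective.} For the discrepancy term I would invoke the triangle inequality for integral probability metrics, $\ipm(\hat{\source}^{\nObs}_{\phi}, \empTar^\nObs) \leq \ipm(\hat{\source}^{\nObs}_{\phi}, \txTarget) + \ipm(\txTarget, \empTar^\nObs)$, and apply Lemma \ref{lem:firstIpmConv} (which covers both $\wass$ and $\mmd_\kernel$ for bounded $\kernel$; here $\kernel$ is continuous on the compact $\txSupp$, hence bounded) to send both right-hand terms to zero almost surely; squaring preserves this. For the regularizer, once $\wVect^\phi$ is feasible we have $\w_i^\phi \leq \clip/\nObs$, so $\|\wVect^\phi\|_2^2 = \sum_i (\w_i^\phi)^2 \leq \nObs (\clip/\nObs)^2 = \clip^2/\nObs \to 0$. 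Adding the two contributions gives $\Phi_{\nObs}(\wVect^\phi) \to 0$ almost surely, and from $0 \leq \min \Phi_{\nObs} \leq \Phi_{\nObs}(\wVect^\phi)$ the claim follows.

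\textbf{Main obstacle.} The only genuinely delicate point is the feasibility argument: it is precisely where the \emph{strict} hypothesis $\clip > \souTarDenBound$ is used, so that the vanishing fluctuation of the self-normalization constant $c_{\nObs}$ cannot push $\nObs \w_i^\phi$ past the clipping threshold. One must also keep in mind that ``feasible for all $\nObs$ large enough, almost surely'' is all that an almost-sure limit statement requires, rather than feasibility for every $\nObs$. Everything beyond this is the triangle inequality together with the already-established convergence in Lemma \ref{lem:firstIpmConv}.
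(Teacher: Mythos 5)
Your proof is correct and follows essentially the same route as the paper: the same self-normalized density-ratio competitor $\w_i^\phi = \phi(\joinVar_i)/\sum_j \phi(\joinVar_j)$, the same triangle-inequality reduction of the discrepancy term to Lemma \ref{lem:firstIpmConv}, and the same sandwich of the minimum between zero and the competitor's objective. The only difference is in the feasibility step, where the paper applies a one-sided McDiarmid inequality plus Borel--Cantelli to $\frac{1}{\nObs}\sum_i \phi(\joinVar_i)$, whereas you invoke the strong law of large numbers directly (using $\mathbb{E}_{\joinDistr}[\phi]=1$, which follows from $\txTarget \ll \joinDistr$); both arguments are valid, and yours is arguably the more elementary.
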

\begin{proof}
We just have to prove that almost surely there are, for $\nObs$ large enough, some feasible vectors $\wVect(\nObs)$ such that $\Phi_{\nObs} (\wVect(\nObs)) \xrightarrow[\nObs \rightarrow + \infty]{a.s.} 0 $. Based on the previous lemma, consider $\wVect(\nObs)$ defined as $\w_i(\nObs) = \phi(\joinVar_i) / \sum_{i = 1}^{\nObs} \phi(\joinVar_i), \forall i$.

Note that any \ipm~respects the triangular inequality, therefore we have:
$$
\ipm^2(\hat{\source}^{\nObs}_{\phi}, \empTar^\nObs) \leq 2. \ipm^2(\hat{\source}^{\nObs}_{\phi}, \txTarget) + 2. \ipm^2(\empTar^{\nObs}, \txTarget) \xrightarrow[\nObs \rightarrow + \infty]{a.s.} 0 ,
$$
where the almost sure convergence toward zero came from Lemma \ref{lem:firstIpmConv}. Also, by law of large numbers:
$$
\| \wVect(\nObs) \|_2^2 = \frac{1}{\nObs^2} \sum_{i = 1}^{\nObs}\phi^2(\joinVar_i) \Big/ \left(\frac{1}{\nObs} \sum_{i = 1}^{\nObs} \phi(\joinVar_i) \right)^2 \leq \frac{1}{\nObs} \souTarDenBound^2 \Big/ \left(\frac{1}{\nObs} \sum_{i = 1}^{\nObs} \phi(\joinVar_i)\right)^2 \xrightarrow[\nObs \rightarrow + \infty]{a.s.} 0.
$$
We just have to prove now that almost surely for $\nObs$ large enough $\wVect(\nObs)$ will be feasible, that is $\w_i(\nObs) \leq \clip / \nObs, \forall i$. At least we have that $\w_i(\nObs) \leq \souTarDenBound / \sum_{i = 1}^{\nObs} \phi(\joinVar_i), \forall i$. Furthermore, using (one-sided) McDiarmid's inequality:
\begin{equation}\label{eq:feasibility}
\mathbb{P}\left(\souTarDenBound / \sum_{i = 1}^{\nObs} \phi(\joinVar_i) > \clip \Big/ \nObs \right) = \mathbb{P}\left( 0 > \frac{\souTarDenBound - \clip}{\clip}  > \frac{1}{\nObs} \sum_{i = 1}^{\nObs} \phi(\joinVar_i) - 1 \right) \leq \exp\left(-\frac{(\souTarDenBound - \clip)^2}{2 \souTarDenBound^2 \clip^2} \nObs \right).
\end{equation}
Applying Borel-Cantelli's lemma, we conclude our proof. 
\end{proof}
Now that we have proved the almost sure convergence to zero of the objective function (from Definition \ref{def:cbdmIPM}), this implies that the difference between the \cbdm~weighted empirical distribution and the target distribution goes also to zero, w.r.t.~the considered \ipm s. Hence, when $\ipm \in \{ \wass, \mmd_\kernel\}$ where $\kernel$ is a universal kernel, this implies almost surely the convergence in distribution (w.r.t.~$\treat$ and $\cov$ for now).
\begin{lemma} \label{lem:firstConvDistr}
Consider again the notations and assumptions from Lemma \ref{lem:firstIpmConv} and Lemma \ref{lem:optimObjConv}. Assume also that the kernel $\kernel$ is universal. Let $\wVect(\nObs)$ be  the \cbdm~weights from Definition \ref{def:cbdmIPM}, that is $\wVect(\nObs) =  \argmin_{\wVect \in \simplex; \,\, \w_i \leq \clip / \nObs, \, \forall i} \Phi_{\nObs} (\wVect)$ for $\ipm \in \{ \wass, \mmd_{\kernel} \}$. Then:
$$
a.s. \quad \hat{\source}^{\wVect(\nObs)}_{\treat, \cov} \xrightarrow[n \rightarrow +\infty]{d} \txTarget.
$$
\end{lemma}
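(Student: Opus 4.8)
The plan is to reduce the claim to the convergence of the chosen integral probability metric and then convert that metric convergence into weak convergence. First I would use that $\wVect(\nObs)$ is the minimizer, so that $\Phi_{\nObs}(\wVect(\nObs)) = \ipm^2(\wDistr, \empTar^\nObs) + \regHyp \| \wVect(\nObs) \|_2^2$ equals the optimal value, which tends to $0$ almost surely by Lemma \ref{lem:optimObjConv}. Since both summands are non-negative, this immediately forces $\ipm(\wDistr, \empTar^\nObs) \xrightarrow{a.s.} 0$. Next, applying the triangle inequality (valid for any \ipm), $\ipm(\wDistr, \txTarget) \leq \ipm(\wDistr, \empTar^\nObs) + \ipm(\empTar^\nObs, \txTarget)$, where the second term vanishes almost surely by Lemma \ref{lem:firstIpmConv}. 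Hence $\ipm(\wDistr, \txTarget) \xrightarrow{a.s.} 0$ for both $\ipm \in \{\wass, \mmd_\kernel\}$, and the remaining task is to upgrade this to $\wDistr \xrightarrow{d} \txTarget$.

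For $\ipm = \wass$ this upgrade is immediate: as $\txSupp$ is compact, convergence in the Wasserstein-$1$ distance is equivalent to convergence in distribution (\cite{villani2003topics}, Theorem 7.12), the same equivalence already exploited in Lemma \ref{lem:firstIpmConv}. For $\ipm = \mmd_\kernel$ with $\kernel$ universal, I would work on the probability-one event on which $\mmd_\kernel(\wDistr, \txTarget) \to 0$. Fix any $g \in C(\txSupp)$ and $\epsilon > 0$; universality of $\kernel$ yields some $\fct \in \rkhs$ with $\| g - \fct \|_\infty \leq \epsilon$. Using that $\wDistr$ and $\txTarget$ are both probability measures (the former because $\wVect(\nObs) \in \simplex$) to bound the two approximation errors, together with the elementary estimate $| \mathbb{E}_{\wDistr}[\fct] - \mathbb{E}_{\txTarget}[\fct] | \leq \| \fct \|_{\rkhs}\, \mmd_\kernel(\wDistr, \txTarget)$ (a direct consequence of the reproducing property and the variational form \eqref{eq:rewriteMMD} of MMD), I obtain
$$
\limsup_{\nObs \to \infty} \left| \mathbb{E}_{\wDistr}[g] - \mathbb{E}_{\txTarget}[g] \right| \leq 2\epsilon .
$$
Letting $\epsilon \to 0$ gives $\mathbb{E}_{\wDistr}[g] \to \mathbb{E}_{\txTarget}[g]$ for every continuous $g$, i.e.\ $\wDistr \xrightarrow{d} \txTarget$.

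The step demanding the most care is this passage from MMD to weak convergence, which is where universality (density of $\rkhs$ in $C(\txSupp)$ for the sup-norm) and boundedness of $\kernel$ (automatic for a continuous kernel on the compact $\txSupp$, ensuring MMD is well-defined) are genuinely used. It is worth noting that the argument is in fact cleaner than its analogue in Lemma \ref{lem:firstIpmConv}: because $\mmd_\kernel(\wDistr, \txTarget)$ is a single scalar that controls $| \mathbb{E}_{\wDistr}[\fct] - \mathbb{E}_{\txTarget}[\fct] |$ uniformly over the whole RKHS unit ball, the convergence of expectations holds simultaneously for all test functions on the good event, so the separability workaround of Remark \ref{rem:countability} is not needed here.
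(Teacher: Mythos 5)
Your proof is correct and follows essentially the same route as the paper's: extract $\ipm(\wDistr,\empTar^\nObs)\to 0$ from the optimal objective value, apply the triangle inequality together with Lemma \ref{lem:firstIpmConv}, then conclude via the compactness/Wasserstein equivalence (\cite{villani2003topics}, Theorem 7.12) for $\wass$ and via uniform approximation of continuous functions by RKHS elements for the universal kernel case. Your closing observation is also accurate: since the single scalar $\mmd_\kernel(\wDistr,\txTarget)$ controls all test functions in the unit ball of $\rkhs$ simultaneously on one probability-one event, the separability argument of Remark \ref{rem:countability} is indeed not needed here (the paper's proof uses this implicitly without comment).
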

\begin{proof}
As the kernel is universal, hence continuous, it is bounded on $\txSupp$ compact. From Lemmas \ref{lem:firstIpmConv} and \ref{lem:optimObjConv} we get:
$$
\ipm(\hat{\source}^{\wVect(\nObs)}_{\treat, \cov}, \txTarget) \leq \ipm(\empTar^\nObs, \txTarget) + \ipm(\hat{\source}^{\wVect(\nObs)}_{\treat, \cov},\empTar^\nObs) \xrightarrow[\nObs \rightarrow + \infty]{a.s.} 0.
$$
Hence $\ipm(\hat{\source}^{\wVect(\nObs)}_{\treat, \cov}, \txTarget) \xrightarrow[\nObs \rightarrow + \infty]{a.s.} 0$ and especially for $\ipm = \wass$ this implies directly the convergence in distribution (Theorem 7.12 from \cite{villani2003topics}).

Now let's look at $\ipm = \mmd_\kernel$. Let $g$ continuous and $\epsilon > 0$. Because $\kernel$ is universal, calling $\rkhs$ its related RKHS, we have that:
\begin{equation}\label{eq:approxFctRKHS}
    \exists h \in \rkhs \text{ s.t. } \| g - h\|_{\infty} \leq \epsilon /3.
\end{equation}
Furthermore, as $\mmd_{\kernel}(\hat{\source}^{\wVect(\nObs)}_{\treat, \cov}, \txTarget) = \sup_{h \in \rkhs \setminus \{ 0\} } | \mathbb{E}_{\hat{\source}^{\wVect(\nObs)}_{\treat, \cov}}[h(\joinVar)] - \mathbb{E}_{\txTarget}[h(\joinVar)]  | / \| h \|_{\rkhs} \xrightarrow[\nObs \rightarrow + \infty]{a.s.} 0$, it means that almost surely for $\nObs$ big enough:
\begin{equation}\label{eq:smallMMD}
    \left|\mathbb{E}_{\hat{\source}^{\wVect(\nObs)}_{\treat, \cov}}[h(\joinVar)] - \mathbb{E}_{\txTarget}[h(\joinVar)] \right | \leq \epsilon / 3.
\end{equation}
Combining \eqref{eq:approxFctRKHS} and \eqref{eq:smallMMD}, it is easy to see that $|\mathbb{E}_{\hat{\source}^{\wVect(\nObs)}_{\treat, \cov}}[g(\joinVar)] - \mathbb{E}_{\txTarget}[g(\joinVar)] | \leq \epsilon$. As $g$ and $\epsilon$ are chosen arbitrarily this implies the convergence in distribution.
\end{proof}

We have the convergence in law of the weighted empirical distribution w.r.t.~$\treat$ and $\cov$. Because the \cbdm~weights are honest (they don't depend on outcome values from the observational data), as we show below it means that actually the full weighted empirical distribution (w.r.t.~$\treat$, $\cov$ and $\out$) will converge to $\target$. This concludes our proof of Theorem \ref{thm:distrConv}.

\subsubsection*{Proof of Theorem \ref{thm:distrConv}} We proved from Lemma \ref{lem:firstConvDistr} that a.s. $\hat{\source}^{\wVect(\nObs)}_{\treat, \cov} \xrightarrow[n \rightarrow +\infty]{d} \txTarget$, if $\wVect(\nObs)$ is the solution from the \cbdm~minimization defined in Definition \ref{def:cbdmIPM}. Let $\hat{\source}^{\wVect(\nObs)}_{} = \sum_{i = 1}^{\nObs} \w_{i}(\nObs) \dirac{(\joinVar_i,\out_i)}$, we want to show that (a.s.) $\hat{\source}^{\wVect(\nObs)} \xrightarrow[n \rightarrow +\infty]{d} \target $.

Let $g$ continuous on $\txSupp \times \ySupp$, which is compact, and let $B$ s.t.~$\| g \|_{\infty} \leq B/2$. Note that $\mathbb{E}_{\hat{\source}^{\wVect(\nObs)}}[g(\joinVar, \out)] = \sum_{i = 1}^{\nObs} \w_{i}(\nObs) g(\joinVar_i, \out_i)$. Recall that $\forall i, \w_{i}(\nObs) \leq \clip / \nObs$ and they depend only on $(\joinVar_{i})_{i=1}^{\nObs}$. Therefore, applying McDiarmid's inequality:
$$
\mathbb{P}\left( \left| \sum_{i = 1}^{\nObs} \w_{i}(\nObs) g(\joinVar_i, \out_i) - \sum_{i = 1}^{\nObs} \w_{i}(\nObs) \mathbb{E}[g(\joinVar_i, \out_i)|\joinVar_i] \right| \geq \epsilon \Big|(\joinVar_{i})_{i=1}^{\nObs} \right) \leq 2 \exp \left( - \frac{2 \epsilon^2}{B^2 \clip^2}\nObs\right).
$$
By Borel-Cantelli's lemma we have (a.s.) for $\nObs$ large enough: $\left| \sum_{i = 1}^{\nObs} \w_{i}(\nObs) (g(\joinVar_i, \out_i) - \mathbb{E}[g(\joinVar_i, \out_i)|\joinVar_i]) \right| \leq \epsilon$. $\mathbb{E}[g(\joinVar, \out)|\joinVar])$ might not be continuous w.r.t.~$\joinVar$ though, hence we can't apply the convergence in distribution from Lemma \ref{lem:firstConvDistr} directly to it. Instead, Lusin theorem states that there exists $g^*$ continuous on $\txSupp$ compact s.t.~$g^{*} \neq \mathbb{E}[g(\joinVar, \out)|\joinVar])$ with probability at most $\epsilon / (2 B \clip)$ (w.r.t.~$\joinDistr$) and $\| g^{*} \|_{\infty} \leq B/2$. We can see that:
\begin{align*}
    \left|\sum_{i = 1}^{\nObs} \w_{i}(\nObs) g(\joinVar_i, \out_i) - \mathbb{E}_{\target}[g(\joinVar, \out)]\right| \leq & \left| \sum_{i = 1}^{\nObs} \w_{i}(\nObs) (g(\joinVar_i, \out_i) - \mathbb{E}[g(\joinVar_i, \out_i)|\joinVar_i]) \right| +  \left| \sum_{i = 1}^{\nObs} \w_{i}(\nObs) (g^*(\joinVar_i) - \mathbb{E}[g(\joinVar_i, \out_i)|\joinVar_i]) \right| \\
    & + \left| \sum_{i = 1}^{\nObs} \w_{i}(\nObs) g^*(\joinVar_i) - \mathbb{E}_{\txTarget}[g^*(\joinVar)] \right| +  \left|\mathbb{E}_{\txTarget}\left[g^*(\joinVar) - \mathbb{E}[g(\joinVar, \out)|\joinVar]\right] \right| \\
    \leq & \epsilon +  \frac{B\clip}{\nObs} \sum_{i=1}^{\nObs} \w_i(\nObs) \mathbbm{1}_{g^*(\joinVar_i) \neq \mathbb{E}[g(\joinVar_i, \out_i)|\joinVar_i]} + \left| \sum_{i = 1}^{\nObs} \w_{i}(\nObs) g^*(\joinVar_i) - \mathbb{E}_{\txTarget}[g^*(\joinVar)] \right| + \frac{\epsilon}{2}.
\end{align*}
The second term, by law of large number, will be bellow $\epsilon$ (a.s.) for $\nObs$ large enough. We can say the same for the third term by Lemma \ref{lem:firstConvDistr}. As $\epsilon$ and $g$ are chosen arbitrarily this concludes our proof.

\section{Algorithm For $\ipm = \wass$} \label{sec:appendixAlgo}

We give here the explicit formulation of the optimization problem from Definition \ref{def:cbdmIPM} when $\ipm = \wass$. As said before, this is a quadratic program, at least positive semi-definite when $\regHyp = 0$, and positive definite when $\regHyp > 0$. This is a consequence from the fact that optimal transport (which gives rise to the Wasserstein distance) is essentially a linear program (\cite{villani2003topics}, see also \cite{peyre2019computational} section 2.3). More precisely, let $\empTar$ be a weighted sum of Diracs $\sum_{j = 1}^{m} q_{j} \dirac{\z_j}$. Hence, the optimization program here is:
$$
\min_{ \substack{ \wVect, M \in \mathcal{M}_{+}(n,m) \\ M \mathbf{1} = \wVect, \, M^\top \mathbf{1} = \mathbf{q}} } \left( \sum_{i,j} M_{i,j} \| \joinVar_i - \z_j \|_2 \right)^2 + \regHyp \| \wVect \|_2^2,
$$
where $\mathbf{q} = (q_j)_{j=1}^m$ and we called $ \mathcal{M}_{+}(n,m)$ the set of non-negative matrices of dimension $n \times m$.

\end{document}